\tikzset{join/.code=\tikzset{after node path={%
			\ifx\tikzchainprevious\pgfutil@empty\else(\tikzchainprevious)%
			edge[every join]#1(\tikzchaincurrent)\fi}}}
\tikzset{>=stealth',every on chain/.append style={join},
	every join/.style={->}}
\tikzstyle{labeled}=[execute at begin node=$\scriptstyle,
\newlist{myitemize}{itemize}{8}
\theoremstyle{definition}
\newtheorem{theorem}{Theorem}[section]
\newtheorem{proposition}[theorem]{Proposition}
\newtheorem{example}[theorem]{Example}
\newtheorem{definition}[theorem]{Definition}
\newtheorem{remark}[theorem]{Remark} 
\newtheorem{corollary}[theorem]{Corollary} 
\numberwithin{figure}{section}
\begin{document} 
	
	\title{On Accuracy and Coherence with\\ Infinite Opinion Sets\footnote{	Thanks to participants of the Berkeley-Stanford Logic Circle (April 2019), Probability and Logic Conference (July 2019), Berkeley Formal Epistemology Reading Course (October 2019), and Stanford Logic and Formal Philosophy Seminar (November 2019), to whom earlier versions of this paper were presented. For helpful comments and discussion, thanks to Kenny Easwaran, Craig Evans, Wesley Holliday, Thomas Icard, Kiran Luecke, Calum McNamara, Sven Neth, Richard Pettigrew, Eric Raidl, Teddy Seidenfeld, and James Walsh. Special thanks to two anonymous referees at \textit{Philosophy of Science} for their very useful feedback.}}
	\author{Mikayla Kelley \\{\small Stanford University}}
 \date{{\small Penultimate draft. Forthcoming in \textit{Philosophy of Science}.}}
	\maketitle
	
	\begin{abstract}
There is a well-known equivalence between avoiding accuracy dominance and having probabilistically coherent credences (see, e.g., \citealt{definetti}, \citealt{Joyce}, \citealt{predd}, \citealt{1schervish2009}, \citealt{pettigrew}). However, this equivalence has been established only when the set of propositions on which credence functions are defined is finite. In this paper, I establish connections between accuracy dominance and coherence when credence functions are defined on an infinite set of propositions. In particular, I establish the necessary results to extend the classic accuracy argument for probabilism originally due to \citet{Joyce1998} to certain classes of infinite sets of propositions including countably infinite partitions.
\end{abstract}

	\section{Introduction}
	A central norm in the epistemology of partial belief is probabilism: a person's degrees of belief---or \textit{credences}---should satisfy the laws of probability.\footnote{This paper is based on work done in \citealt{kelleym}.} There is a long tradition in the spirit of \cite{Savage} and \cite{definetti} of appealing to the epistemic virtue of accuracy to justify probabilism (also see \citealt{Rosenkrantz1981-ROSFAA-3}). One particular form of argument is the accuracy dominance argument for probabilism introduced by \cite{Joyce1998}. Let a set $\mathcal{F}$ of propositions be an \textit{opinion set} and a function $c:\mathcal{F}\to [0,1]$ a \textit{credence function on} $\mathcal{F}$. Let a credence function be \textit{coherent} if it satisfies the axioms of probability.  A credence function $c'$ on $\mathcal{F}$ \textit{accuracy dominates} a credence function $c$ on $\mathcal{F}$ if $c$ is more inaccurate than $c'$ no matter how the world turns out to be (where inaccuracy is precisified as in Section \ref{finitecase}). Then the existing accuracy dominance arguments purport to vindicate probabilism by showing that a credence function is not accuracy dominated if and only if it is coherent.
	

	However, there is a limitation to almost all of the literature on accuracy dominance arguments for probabilism: the opinion set is assumed to be finite.\footnote{In an unpublished manuscript, \cite{walsh} proves an accuracy dominance result in the countably infinite context, to which we return in Section \ref{countablecase1}. In a related but distinct area, \cite{hutteger} and \cite{easwaran} extend to the infinite setting part of the literature on using minimization of expected inaccuracy to vindicate epistemic principles. See, e.g., \citealt{greaves}. \cite{schervishreference} prove that in certain countably infinite cases, coherence is sufficient to avoid \textit{strong dominance}. \cite{1schervish2009} and \cite{STEEGER201934} explore a different way to weaken the assumption that the opinion set is finite. We return to their work in Section \ref{countablecase2}.} Indeed, \cite{definetti}, \cite{lindley}, \cite{Joyce1998, Joyce}, \cite{predd}, \cite{leitgebi,leitgebii}, and \cite{pettigrew} all  establish their dominance results only for finite opinion sets.\footnote{The same holds for accuracy dominance results pertaining to approximating coherence (\citealt{debona}, \citealt[ch.~5]{Staffel2019-STAUTA-2}) and accuracy dominance results that significantly weaken the additivity assumption on the measure of inaccuracy (\citealt{pettigrew21}, \citealt{nielsen}), though neither will be my focus here.} In this paper, I remove this assumption and prove dominance results that I hope to be useful in evaluating the extent to which accuracy dominance arguments for probabilism succeed when the opinion set is infinite.  
	
	I begin in Section \ref{finitecase} by reviewing the mathematical framework and the standard dominance result for finite opinion sets. Sections \ref{countablecase1}-\ref{uncountablecase} are concerned with accuracy and coherence in the infinite setting. In Sections \ref{countablecase1} and \ref{countablecase2}, I make headway on characterizing the opinion sets and accuracy measures for which there is an equivalence between coherence and avoiding dominance as in the finite case. In Section \ref{uncountablecase}, I extend the accuracy framework to the uncountable setting and prove that coherence is necessary to avoid dominance on uncountable opinion~sets. I conclude in Section \ref{discussion} with a discussion of the results established in  Sections \ref{countablecase1}-\ref{uncountablecase}.

	\section{The Finite Case}\label{finitecase}
	We first set up the framework that will be used throughout the paper. Fix a set $W$ (not necessarily finite) which represents the set of \textit{possible worlds} and, for now, a finite set $\mathcal{F}\subseteq \mathcal{P}(W)$ of \textit{propositions} that represents an \textit{opinion set}---the set of propositions that an agent has beliefs about.
	\begin{definition}\label{Algebra} An \textit{algebra} over $W$ is a subset $\mathcal{F}^*\subseteq \mathcal{P}(W)$ such that: 
		\begin{enumerate}
			\item $W\in \mathcal{F}^*$;
			\item if $p,p'\in \mathcal{F}^*$, then $p\cup p'\in \mathcal{F}^*$;
			\item if $p\in \mathcal{F}^*$, then $W\setminus p\in \mathcal{F}^*$.
		\end{enumerate}
	\end{definition}
	
	\begin{definition}\label{credence}
		\begin{enumerate}
			\item[i.] A \textit{credence function} on an opinion set $\mathcal{F}$ is a function from $\mathcal{F}$ to $[0,1]$. 
			\item[ii.] A credence function $c$ is \textit{coherent} if it can be extended to a finitely additive probability function on an algebra $\mathcal{F}^*$ over $W$ containing $\mathcal{F}$. That is, there is an algebra $\mathcal{F}^*\supseteq \mathcal{F}$ over $W$ and a function $c^*:\mathcal{F}^*\to [0,1]$ such that:
			\begin{enumerate}
				\item $c^*(p)=c(p)$ for all $p\in \mathcal{F}$;
				\item $c^*(p\cup p')=c^*(p)+c^*(p')$ for $p,p'\in \mathcal{F}^*$ with $p\cap p'=\varnothing$;
				\item $c^*(W)=1$.
			\end{enumerate}
			\item[iii.] A credence function that is not coherent is \textit{incoherent}. 
			
		\end{enumerate}
	\end{definition}

	\begin{remark}\label{tupelrep} If $\mathcal{F}=\{p_1,\ldots,p_n\}$, a credence function $c$ over $\mathcal{F}$ can be identified with the vector $(c(p_1),\ldots,c(p_n))\in [0,1]^n$. Thus the space of all credence functions over $\mathcal{F}$ can be identified with $[0,1]^n\subseteq\mathbb{R}^n$. We often simplify notation by setting $c_i:=c(p_i)$.
	\end{remark}
	
	We now introduce an important subclass of the class of all credence functions, namely the (coherent) credence functions that match the truth values of $\mathcal{F}$ at a world $w$ exactly.  
	
	\begin{definition}\label{omniscient} Fix an opinion set $\mathcal{F}$. For each $w\in W$, let $v_{w}:\mathcal{F}\to \{0,1\}$ be defined by $v_w(p)=1$ if and only if $w\in p$. We call $v_w$ the \textit{omniscient credence function at world $w$}. We let $\mathcal{V}_{\mathcal{F}}$ denote the set of all omniscient credence functions on $\mathcal{F}$. Note that $|\mathcal{V}_{\mathcal{F}}|\leq 2^{|\mathcal{F}|}$.
	\end{definition}

	Next, we specify the inaccuracy measures that we will be concerned with in this section.  Fix a finite opinion set $\mathcal{F}$, and let $\mathcal{C}$ denote the set of credence functions on $\mathcal{F}$. We define an \textit{inaccuracy measure} to be a function of the form 
	\[\mathscr{I}:\mathcal{C}\times W\to [0,\infty].\]
	The class of inaccuracy measures we consider is a generalization of the class defended by \cite{pettigrew}: the inaccuracy measures defined in terms of what we call a \textit{quasi-additive Bregman divergence}.  It is a subclass of the inaccuracy measures assumed in \citealt{predd}.\footnote{Using terminology from Definition \ref{divergence}, \citeauthor{predd} consider a more general class in allowing different one-dimensional Bregman divergences for different propositions.}
	
	\begin{definition}\label{divergence} Suppose $\mathfrak{D}:[0,1]^n\times [0,1]^n\to [0,\infty]$.
		\begin{enumerate}
			\item\label{divergence1} $\mathfrak{D}$ is a \textit{divergence} if $\mathfrak{D}(\mathbf{x},\mathbf{y})\geq 0$ for all $\mathbf{x},\mathbf{y}\in [0,1]^n$ with equality if and only if $\mathbf{x}=\mathbf{y}$.
			\item\label{divergence2} $\mathfrak{D}$ is \textit{quasi-additive} if there exists a function $\mathfrak{d}:[0,1]^2\to [0,\infty]$ and a sequence of elements $\{a_i\}_{i=1}^n$ from $(0,\infty)$ such that 
			\[\mathfrak{D}(\mathbf{x},\mathbf{y})=\sum_{i=1}^na_i\mathfrak{d}(x_i,y_i),\]
			in which case we say $\mathfrak{D}$ is \textit{generated} by $\mathfrak{d}$ and $\{a_i\}_{i=1}^n$.
			\item\label{divergence3} $\mathfrak{D}$ is a \textit{quasi-additive Bregman divergence} if $\mathfrak{D}$ is a quasi-additive divergence generated by $\mathfrak{d}$ and $\{a_i\}_{i=1}^n$, and in addition there is a function $\varphi:[0,1]\to \mathbb{R}$ such that:
			\begin{enumerate}
				\item $\varphi$ is continuous and strictly convex on $[0,1]$;\footnote{Therefore $\varphi$ is bounded, as it is a continuous function on a compact interval.}
				\item $\varphi$ is continuously differentiable on $(0,1)$ with the formal definition
				\[\varphi'(i):=\lim_{x\to i}\varphi'(x)\]
				for $i\in \{0,1\}$;\footnote{We do not require $\varphi'(i)<\infty$ for $i\in \{0,1\}$.}
				\item for all $x,y\in[0,1]$, we have 
				\[\mathfrak{d}(x,y)=\varphi(x)-\varphi(y)-\varphi'(y)(x-y).\]
				We call such a $\mathfrak{d}$ a \textit{one-dimensional Bregman divergence}.
			\end{enumerate}
		\end{enumerate}
	\end{definition}
	\noindent We take the inaccuracy of a credence function $c$ at a world $w$ to be the distance between $c$ and the omniscient credence function $v_w$, where distance is measured with a quasi-additive Bregman divergence.
	
	    \begin{definition}
		Let a \textit{legitimate inaccuracy measure} be an inaccuracy measure given by \[\mathscr{I}(c,w)=\mathfrak{D}(v_w,c),\] where $\mathfrak{D}$ is a quasi-additive Bregman divergence. 
	\end{definition}
	
	By allowing different weights depending on the proposition, we can accommodate the intuition that some propositions are more important to know than others.\footnote{Though see \citealt{Levinstein2019-LEVAOO-3} for an argument that one should expect weights to vary with respect to worlds as well as propositions.} Even if one thinks that inaccuracy measures should be additive, as \cite{pettigrew} does, relaxing this restriction makes our results more widely relevant.  A popular example of an additive legitimate inaccuracy measure is the Brier score (see Section 12, ``Homage to the Brier Score,'' of \citealt{Joyce}):
	\[\mathscr{I}(c,w)=\sum_{i=1}^n(v_w(p_i)-c(p_i))^2.\]
	\begin{remark}\label{strictlyproperremark}
	The class of additive Bregman divergences is the class of additive and continuous \textit{strictly proper scoring rules}. See \citealt[p.~66]{pettigrew}. Also see, e.g., \citealt{bregmandivergences2} and \citealt{bregmandivergences1} for more details on Bregman divergences as well as their connection to strictly proper scoring rules. 
	\end{remark}

	We now recall the dominance result connecting coherence to accuracy dominance when the opinion set is finite. It was first proved for the Brier score by \citeauthor{definetti} (\citeyear[pp.~87-90]{definetti}) and extended to any legitimate inaccuracy measure by \cite{predd}.\footnote{See Section 7 of \citealt{predd}.} See \citealt{1schervish2009} and \citealt{pettigrew21} (see also \citealt{nielsen}) for further generalizations of the finite result.
	
	\begin{definition}\label{dominance}For each pair of credence functions $c, c^*$ over $\mathcal{F}$:
		\begin{enumerate}
			\item \label{dominanceweak}$c^*$ \textit{weakly dominates} $c$ relative to an inaccuracy measure $\mathscr{I}$ if $\mathscr{I}(c,w)\geq \mathscr{I}(c^*,w)$ for all $w\in W$ and $\mathscr{I}(c,w)>\mathscr{I}(c^*,w)$ for some $w\in W$;
			\item\label{dominancestrong} $c^*$ \textit{strongly dominates} $c$ relative to $\mathscr{I}$ if $\mathscr{I}(c,w)>\mathscr{I}(c^*,w)$ for all $w\in W$.
		\end{enumerate}
	\end{definition}
	\begin{theorem}[\citealt{definetti}, \citealt{predd}]\label{step3result} Let $\mathcal{F}$ be a finite opinion set, $\mathscr{I}$ a legitimate inaccuracy measure, and $c$ a credence function on $\mathcal{F}$. Then the following are equivalent:
		\begin{enumerate}
			\item $c$ is not strongly dominated;
			\item $c$ is not weakly dominated;
			\item $c$ is coherent.
		\end{enumerate}
	Further, if $c$ is incoherent, then $c$ is strongly dominated by a coherent credence function.
	\end{theorem}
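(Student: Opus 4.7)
My plan is to prove the cycle via (2) $\Rightarrow$ (1) (immediate, since any strong dominator is a weak dominator), (3) $\Rightarrow$ (2), and the sharpened final claim that every incoherent $c$ is strongly dominated by a coherent credence function (which in particular yields (1) $\Rightarrow$ (3)). Both substantive implications lean on a geometric description of coherence: because $\mathcal{F}$ is finite, the algebra generated by $\mathcal{F}$ has finitely many atoms, and every finitely additive probability on it is a convex combination of the point masses at those atoms; restricting to $\mathcal{F}$, the set $K \subseteq [0,1]^n$ of coherent credence functions is exactly $\mathrm{conv}(\mathcal{V}_{\mathcal{F}})$, a compact convex polytope.

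For (3) $\Rightarrow$ (2), I would write a coherent $c$ as $c = \sum_{v \in \mathcal{V}_{\mathcal{F}}} \lambda_v v$ with $\lambda_v \geq 0$ summing to $1$. Using the coordinate decomposition of $\mathfrak{D}$ and interchanging the finite sums,
\[\sum_{v} \lambda_v \mathfrak{D}(v, c') \;=\; \sum_{i=1}^{n} a_i \left(\sum_{v} \lambda_v \varphi(v_i) \;-\; \bigl[\varphi(c'_i) + \varphi'(c'_i)(c_i - c'_i)\bigr]\right).\]
Strict convexity of $\varphi$ gives, for each $i$, the tangent-line inequality $\varphi(c_i) > \varphi(x) + \varphi'(x)(c_i - x)$ for every $x \in [0,1]$ with $x \neq c_i$, so $c' \mapsto \sum_v \lambda_v \mathfrak{D}(v, c')$ is uniquely minimized at $c' = c$ (strict propriety; cf.\ Remark \ref{strictlyproperremark}). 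A weak dominator $c^*$ would satisfy $\sum_v \lambda_v \mathfrak{D}(v, c^*) \leq \sum_v \lambda_v \mathfrak{D}(v, c)$, forcing $c^* = c$ and contradicting the strict inequality at some world demanded by weak dominance.

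For the final claim, suppose $c \notin K$. Since $K$ is compact and $c' \mapsto \mathfrak{D}(c', c)$ is continuous on $[0,1]^n$ (using that $\varphi'$ is extended to the endpoints by limits), the Bregman projection $c^* := \arg\min_{c' \in K} \mathfrak{D}(c', c)$ exists in $K$ and satisfies $c^* \neq c$. The key tool is the generalized Pythagorean inequality
\[\mathfrak{D}(v, c) \;\geq\; \mathfrak{D}(v, c^*) + \mathfrak{D}(c^*, c) \quad \text{for every } v \in K,\]
which follows from first-order optimality of $c^*$ in the admissible direction $v - c^*$: unwinding the Bregman form reduces it to a one-dimensional convexity fact. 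Applied with $v = v_w$ for each $w \in W$, and using $\mathfrak{D}(c^*, c) > 0$, this gives $\mathscr{I}(c, w) > \mathscr{I}(c^*, w)$ everywhere, so the coherent $c^*$ strongly dominates $c$. The main obstacle is the Pythagorean step when $\varphi'$ is unbounded at $\{0,1\}$: the directional-derivative computation must be restricted to directions pointing into $K$, and I would handle this by noting that if a coordinate of $c^*$ is pinned to $0$ or $1$, then every $v \in K$ must agree with $c^*$ in that coordinate (as $K = \mathrm{conv}(\mathcal{V}_{\mathcal{F}})$ and the $v_w$ are $\{0,1\}$-valued), so the potentially infinite boundary term cancels on both sides of the inequality.
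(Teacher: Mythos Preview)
The paper does not supply its own proof of Theorem~\ref{step3result}; it is cited from \citeauthor{definetti} and \citeauthor{predd}. Your approach---Bregman projection onto $K=\mathrm{conv}(\mathcal V_{\mathcal F})$ plus the generalized Pythagorean inequality for the dominance direction, and strict propriety for (3)~$\Rightarrow$~(2)---is precisely the Predd et al.\ strategy, and it mirrors the techniques the paper itself deploys when proving its countable extensions (compare the proof of Theorem~\ref{inftheorem1} via Csisz\'ar's projection theorem, and of Proposition~\ref{CCresult} via strict propriety). So at the level of overall architecture your proposal matches the intended proof.

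There is, however, a genuine gap in your boundary analysis. Your claim that ``if a coordinate of $c^*$ is pinned to $0$ or $1$, then every $v\in K$ must agree with $c^*$ in that coordinate'' fails when $c$ itself already has a coordinate in $\{0,1\}$. Take the log score (so $\varphi'(0)=-\infty$) on a three-cell partition and $c=(0.9,0.9,0)$. Since $\mathfrak d(x,0)=\infty$ for $x>0$, the objective $\mathfrak D(\,\cdot\,,c)$ is infinite off $\{c'_3=0\}$, which forces the projection to be $c^*=(0.5,0.5,0)$; yet $v=(0,0,1)\in\mathcal V_{\mathcal F}$ has $v_3=1\neq c^*_3$. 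At the world lying in the third cell both $\mathscr I(c,\cdot)$ and $\mathscr I(c^*,\cdot)$ equal $\infty$, so $c^*$ does \emph{not} strongly dominate $c$---even though some other coherent function, e.g.\ $(0.45,0.45,0.1)$, does. Your directional-derivative heuristic is valid when $c\in(0,1)^n$: then $c^*_i\in\{0,1\}$ really would force constancy of the $i$th coordinate on $\mathcal V_{\mathcal F}$, since otherwise the one-sided derivative of $t\mapsto\mathfrak D(c^*+t(v-c^*),c)$ at $t=0^+$ would be $-\infty$, contradicting optimality. But the argument breaks exactly when $c_i\in\{0,1\}$ and $\varphi'$ blows up there. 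Note that the paper sidesteps this issue entirely in its own proofs by restricting the countable theory to \emph{bounded} one-dimensional Bregman divergences (see Remark~\ref{reductiononphi}); a complete proof of Theorem~\ref{step3result} as stated---allowing $\varphi'(0)$ or $\varphi'(1)$ to be infinite---requires a separate treatment of the case $c\notin(0,1)^n$.
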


	On the basis of Theorem \ref{step3result}, authors in the accuracy literature conclude that an incoherent credence function is objectionable because there is an undominated coherent credence function that does strictly better in terms of accuracy, no matter how the world turns out to be, whereas coherent credence functions are not accuracy dominated in this way. Since it is the basis of the accuracy dominance argument for probabilism in the finite case, Theorem \ref{step3result} is the result we would like to extend to infinite opinion sets. We now make progress toward this goal when $\mathcal{F}$ is countably infinite.

	\textit{Note that any missing proofs in Sections \ref{countablecase1}-\ref{uncountablecase} can be found in the Appendix. } 
	\section{The Countably Infinite Case: Coherence is Necessary}\label{countablecase1}
	\subsection{Generalized Legitimate Inaccuracy Measures}
	We begin with a discussion of how to measure inaccuracy in the countably infinite setting. Fix a countably infinite opinion set $\mathcal{F}$ over a set $W$ of worlds (of arbitrary cardinality). Let $\mathcal{C}$ be the set of credence functions over $\mathcal{F}$, which can be identified with $[0,1]^\infty$ (see Remark \ref{tupelrep}). An \textit{inaccuracy measure} remains a map from $\mathcal{C}\times W$ to $[0,\infty]$.
	
	The class of inaccuracy measures that we use are defined in terms of generalizations of quasi-additive Bregman divergences.
	
	\begin{definition}\label{quasiadditivebregman}
		Suppose $\mathfrak{D}:[0,1]^\infty\times [0,1]^\infty\to[0,\infty]$. Then we call $\mathfrak{D}$ a \textit{generalized quasi-additive Bregman divergence} if \[\mathfrak{D}(\mathbf{x},\mathbf{y})=\sum_{i=1}^\infty a_i \mathfrak{d}(x_i,y_i),\]
		where $\mathfrak{d}$ is a bounded\footnote{Boundedness is assumed for technical reasons.} one-dimensional Bregman divergence as in Definition \ref{divergence}.\ref{divergence3} and $\{a_i\}_{i=1}^\infty$ a sequence of elements from $ (0,\infty)$ with $\sup_{i}a_i<\infty$.\footnote{Recall that $\sup_ia_i=a\in \mathbb{R}\cup\{+\infty, -\infty\}$ such that $a_i\leq a$ for all $i\in \mathbb{N}$ and for any $b<a$, there is some $a_i$ such that $b<a_i\leq a$.} 
	\end{definition}
	\begin{remark}\label{reductiononphi}
		Note that $\mathfrak{d}$---defined in terms of $\varphi$---being bounded is equivalent to $\varphi'$ being bounded on $[0,1]$. Further, we may assume that $\varphi(0)=\varphi'(0)=0$ since $\mathfrak{d}_\varphi=\mathfrak{d}_{\bar{\varphi}}$ if $\varphi$ and $\bar{\varphi}$ differ by a linear function.\footnote{Proof: Let $\bar{\varphi}(x)=\varphi(x)+ax+b$. Then $\mathfrak{d}_{\bar{\varphi}}(x,y)=\varphi(x)+ax+b-\varphi(y)-ay-b-(\varphi'(y)+a)(x-y)=\varphi(x)+ax+b-\varphi(y)-ay-b-\varphi'(y)(x-y)-ax+ay=\varphi(x)-\varphi(y)-\varphi'(y)(x-y)=\mathfrak{d}_{\varphi}(x,y)$. Further, if $\varphi$ satisfies the conditions in Definition \ref{divergence}.\ref{divergence3}, then $\bar{\varphi}$ does as well.}
	\end{remark}
	\noindent In the appendix, we show that generalized quasi-additive Bregman divergences are examples of what \cite{csiszar} calls \textit{Bregman distances}, which are generalizations of quasi-additive Bregman divergences defined on spaces of non-negative functions.
	
	Suggestively, we make the following definition.
	\begin{definition} Given an enumeration of $\mathcal{F}$,\footnote{The choice of enumeration does not matter since the terms in the infinite sum defining inaccuracy are non-negative. Thus convergence is absolute and independent of order.} let a \textit{generalized legitimate inaccuracy measure} be an inaccuracy measure $\mathscr{I}:\mathcal{C}\times W\to [0,\infty]$ given by 
		\begin{equation}\label{specificinaccuracy}
		\mathscr{I}(c,w)=\mathfrak{D}(v_w,c)
		\end{equation}
		for $\mathfrak{D}$ a generalized quasi-additive Bregman divergence.\footnote{Note that in the infinite setting, we may have $v_{w_1}=v_{w_2}$ with $w_1\neq w_2$.}
	\end{definition}
	\noindent Notice that the Brier score extends to a generalized legitimate inaccuracy measure, namely the squared $\ell^2(\mathcal{F})$ norm
	\begin{equation}\label{generalizedbrier}\mathscr{I}(c,w)=||v_w-c||_{\ell^2(\mathcal{F})}^2=\sum_{i=1}^\infty (v_w(p_i)-c(p_i))^2.
	\end{equation}
	We call (\ref{generalizedbrier}) the \textit{generalized Brier score}.
	
	The name ``generalized legitimate inaccuracy measure'' is motivated by the observation that a generalized legitimate inaccuracy measure naturally restricted to the finite opinion sets is a legitimate inaccuracy measure. This is because 1) for both the generalized and finite legitimate inaccuracy measures, the score of an individual proposition is defined by a one-dimensional Bregman divergence, and 2) for both the generalized and finite legitimate inaccuracy measures, the scores of individual propositions are combined in a weighted additive way to give a score for the entire credence function. To use the terminology of \cite{leitgebi}, in the finite and countably infinite setting, the local scores are the same and the global scores relate to the local scores in the same way. These observations support the view that, insofar as quasi-additive Bregman divergences are the appropriate functions to use for measuring inaccuracy in the finite setting, generalized quasi-additive Bregman divergences are the appropriate functions to use for measuring inaccuracy in the countably infinite setting (see Section \ref{discussion} for further discussion of generalized legitimate inaccuracy measures).\footnote{\label{axiom}The class of generalized legitimate inaccuracy measures could also be justified by defending the following axiom, which picks out the generalized legitimate inaccuracy measures: let $\mathcal{F}$ be a countable opinion set and let $\mathcal{F}_1\subseteq \mathcal{F}_2\subseteq \ldots$ be a sequence of increasing subsets of $\mathcal{F}$ whose union is $\mathcal{F}$. Then there are legitimate inaccuracy measures $\mathscr{I}_1,\mathscr{I}_2,\ldots$ generated by the same one-dimensional Bregman divergence and compatible weights such that the inaccuracy of a credence function $c$ on $\mathcal{F}$ at world $w$ is given by $\mathscr{I}(c,w)=\lim_{n\to \infty}\mathscr{I}_n(c|_n,w)$, where $c|_n$ is the restriction of $c$ to $\mathcal{F}_n$. By ``compatible weights'', we mean that if $\mathscr{I}_n$ and $\mathscr{I}_{n+1}$ are generated by weights $\{a_i\}_{i=1}^{|\mathcal{F}_n|}$ and $\{b_i\}_{i=1}^{|\mathcal{F}_{n+1}|}$, respectively, then $a_i=b_i$ for $1\leq i\leq |\mathcal{F}_n|$.}
	\subsection{Coherence is Necessary}
	We now state one of our main results: coherence is necessary to avoid accuracy dominance in the countably infinite case. See the Appendix for the proof.
	
	\begin{restatable}{theorem}{inftheorem}\label{inftheorem1}
		Let $\mathcal{F}$ be a countably infinite opinion set, $\mathscr{I}$ a generalized legitimate inaccuracy measure, and $c$ an incoherent credence function. Then:
		\begin{enumerate}
			\item \label{inftheorem11} $c$ is weakly dominated relative to $\mathscr{I}$ by a coherent credence function; and
			\item \label{inftheorem12} if $\mathscr{I}(c,w)<\infty$ for each $w\in W$, then $c$ is strongly dominated relative to $\mathscr{I}$ by a coherent credence function.
			\end{enumerate}
		\end{restatable}
	\begin{remark}\label{finitewherefinite}
	By analyzing the proof of Theorem \ref{inftheorem1}, one can see that the most general way to state the theorem is: assume $c$ is incoherent; if $\mathscr{I}(c,w)<\infty$ for some $w$, then there is a coherent credence function $d$ such that $\mathscr{I}(d,w)<\mathscr{I}(c,w)$ for all $w$ such that $\mathscr{I}(c,w)<\infty$; if $\mathscr{I}(c,w)=\infty$ for all $w\in W$, then any omniscient credence function weakly dominates $c$.
	\end{remark}
	\begin{remark}\label{dominatedbycoherent}
	The following is easy to prove from the results of \cite{1schervish2009}: any incoherent credence function $c$ over a countably infinite opinion set is weakly dominated but not necessarily by a \textit{coherent} credence function; and  if $\mathscr{I}(c,w)<\infty$ for each $w\in W$, then $c$ is strongly dominated but not necessarily by a \textit{coherent} credence function.\footnote{Proof sketch: If $c$ is incoherent, then there is some finite $\overline{\mathcal{F}}\subseteq \mathcal{F}$ on which $c$ is incoherent. Restrict $c$ to $c|_{\overline{\mathcal{F}}}$ on $\overline{\mathcal{F}}$. Then by Theorem \ref{step3result}, there is some $\overline{d}$ that strongly dominates $c|_{\overline{\mathcal{F}}}$. Extend $\overline{d}$ to a credence function $d$ on $\mathcal{F}$ by copying $c$ off of $\overline{\mathcal{F}}$. Then so long as $c$ has finite inaccuracy at some world, $d$ will weakly dominate $c$.} Thus the value in the proof strategy to come is that the dominating credence function is proven to be coherent, which is analogous to the finite case.\footnote{Thanks to Teddy Seidenfeld for suggesting this connection to the finite case.}\textsuperscript{,}\footnote{\label{dominancefootnote}Further, it is often argued that not all dominated credence functions are irrational---only those that are dominated by a credence function which is itself not dominated (see discussion of various dominance principles in \citealt[p.~22]{pettigrew}). For the opinion sets and inaccuracy measures dicussed in Section \ref{countablecase2}, the undominated credence functions will be precisely the coherent credence functions, and so the added strength of Theorem \ref{inftheorem1} is normatively important, as well. }
	\end{remark}
	
	We note that one direction of \citeauthor{walsh}'s (\citeyear{walsh}) accuracy dominance result follows immediately from Theorem \ref{inftheorem1}. We first recall his result.
	\begin{theorem}[\citealt{walsh}]\label{walshtheorem} Let $\mathcal{F}$ be a countably infinite opinion set. Let 
		\begin{equation}\label{walshscore}
		\mathscr{I}(c,w)=\sum_{i=1}^\infty 2^{-i}(v_w(p_i)-c(p_i))^2.
		\end{equation}
		Then:
		\begin{enumerate}
			\item if $c$ is incoherent, then $c$ is strongly dominated relative to $\mathscr{I}$ by a coherent credence function;
			\item if $c$ is coherent, then $c$ is not weakly dominated relative to $\mathscr{I}$ by any credence function $d\neq c$.
		\end{enumerate}
	\end{theorem}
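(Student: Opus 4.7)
The plan is to split the theorem into its two halves. Part 1 should be a direct corollary of Theorem \ref{inftheorem1}. I would first verify that the score in (\ref{walshscore}) is a generalized legitimate inaccuracy measure: take $\varphi(x) = x^2$ (continuous, bounded, and strictly convex on $[0,1]$, continuously differentiable with $\varphi'(x) = 2x$ bounded, and satisfying $\varphi(0) = \varphi'(0) = 0$), which generates the one-dimensional Bregman divergence $\mathfrak{d}(x,y) = (x-y)^2$; the weights $a_i = 2^{-i}$ satisfy $\sup_i a_i = 1/2 < \infty$. Since $(v_w(p_i) - c(p_i))^2 \leq 1$ term by term, we have $\mathscr{I}(c, w) \leq \sum_{i=1}^\infty 2^{-i} = 1 < \infty$ for every $c$ and every $w$, so the finite-inaccuracy hypothesis of the strong-dominance clause of Theorem \ref{inftheorem1} holds automatically, and Part 1 follows at once.

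For Part 2, I would argue via strict propriety of the weighted Brier score. Fix a coherent $c$ and extend it to a finitely additive probability $\mu$ on an algebra $\mathcal{F}^* \supseteq \mathcal{F}$. For any credence function $d$ on $\mathcal{F}$, each map $w \mapsto (v_w(p_i) - d_i)^2$ is the $\mathcal{F}^*$-simple function taking value $(1 - d_i)^2$ on $p_i$ and $d_i^2$ on $W \setminus p_i$, with $\mu$-integral $c_i(1 - d_i)^2 + (1 - c_i) d_i^2$. Passing the infinite weighted sum through the integral (justified below) gives
\[
\int \mathscr{I}(d, \cdot)\, d\mu \;=\; \sum_{i=1}^\infty 2^{-i}\bigl[c_i(1 - d_i)^2 + (1 - c_i) d_i^2\bigr].
\]
Each summand, viewed as a function of $d_i$ alone, is strictly convex and uniquely minimized at $d_i = c_i$. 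Hence if $d \neq c$, at least one coordinate yields a strict inequality (and the rest yield $\geq$), so $\int \mathscr{I}(d, \cdot)\, d\mu > \int \mathscr{I}(c, \cdot)\, d\mu$. But weak dominance of $c$ by $d$ would force $\mathscr{I}(d, w) \leq \mathscr{I}(c, w)$ pointwise, hence $\int \mathscr{I}(d, \cdot)\, d\mu \leq \int \mathscr{I}(c, \cdot)\, d\mu$, contradicting strict propriety. So no $d \neq c$ can weakly dominate $c$.

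The step I expect to need the most care is the interchange of the infinite sum and the finitely additive integral, since monotone and dominated convergence can fail outright for merely finitely additive measures. The rescue comes from the geometric decay of the weights: the partial sums $\mathscr{I}_N(d, w) := \sum_{i=1}^N 2^{-i}(v_w(p_i) - d_i)^2$ satisfy $|\mathscr{I}(d, w) - \mathscr{I}_N(d, w)| \leq \sum_{i > N} 2^{-i} = 2^{-N}$ uniformly in $w$, so $\mathscr{I}_N(d, \cdot) \to \mathscr{I}(d, \cdot)$ uniformly. Each $\mathscr{I}_N(d, \cdot)$ is a bounded $\mathcal{F}^*$-simple function whose $\mu$-integral is standard, and the finitely additive integral extends by uniform continuity to uniform limits of simple functions and commutes with such limits. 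This legitimizes the interchange and completes the strict-propriety argument.
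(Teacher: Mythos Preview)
Your treatment of Part~1 matches the paper's own derivation exactly: the paper, immediately after stating the theorem, observes that Part~1 follows from Theorem~\ref{inftheorem1} by taking $\varphi(x)=x^2$, $\mathfrak{d}(x,y)=(x-y)^2$, and weights $a_i=2^{-i}$, noting that $\mathscr{I}(c,w)<\infty$ always since $\sum_i 2^{-i}<\infty$.

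For Part~2 the paper does not supply a proof at all; it simply attributes the full statement to Walsh and confines its own contribution to deriving Part~1. Your expected-score argument is therefore additional content, and it is correct. The key point you identify---that the geometric weights force $\mathscr{I}_N(d,\cdot)\to\mathscr{I}(d,\cdot)$ \emph{uniformly} in $w$---is exactly what lets you integrate against a merely finitely additive extension $\mu$: the finitely additive integral on $\mathcal{F}^*$-simple functions is $1$-Lipschitz in the sup norm, so it passes to uniform limits and remains monotone there, which is all you need to turn pointwise weak dominance into the inequality $\int\mathscr{I}(d,\cdot)\,d\mu\le\int\mathscr{I}(c,\cdot)\,d\mu$ that contradicts strict propriety.

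It is worth noting how your route compares with the machinery the paper later develops. In Section~\ref{countablecase2} the paper proves an analogue of your Part~2 argument (Proposition~\ref{CCresult}) only for \emph{countably} coherent credence functions, using a countably additive extension so that dominated/monotone convergence is available; it then transfers the conclusion to merely coherent credence functions via the compactification construction and W-stability. Your argument bypasses all of this for the Walsh score: because the weights are summable, uniform convergence replaces countable additivity, and no compactification or stability hypothesis is needed. This is a genuine simplification specific to summable-weight scores, and it is a cleaner route to Part~2 than anything the paper provides for this particular theorem.
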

	\noindent Part 1 of this result follows from Theorem \ref{inftheorem1} by defining $\mathscr{I}$ in terms of the generalized quasi-additive Bregman divergence generated by $\{2^{-i}\}_{i=1}^\infty$ and 
	\[\mathfrak{d}(x,y)=x^2-y^2-2y(x-y)=\varphi(x)-\varphi(y)-\varphi'(y)(x-y),\]
	where $\varphi(x)=x^2$. Note that $\mathscr{I}(c,w)<\infty$ for all $c\in \mathcal{C}$ and $w\in W$ as $\sum_{i=1}^\infty 2^{-i}<\infty$.

	\section[The Countably Infinite Case: The Sufficiency of Coherence]{The Countably Infinite Case: The Sufficiency of Coherence}\label{countablecase2}
	Unlike coherent credence functions on finite opinion sets, coherent credence functions on countably infinite opinion sets can be strongly dominated.
	\begin{example}\label{problemswithinfinities}
		Let $\mathcal{F}=\{\{n\geq N:n\in \mathbb{N}\}:N\in \mathbb{N}\}$ be an opinion set over $\mathbb{N}$ (including zero). Let 
		\[c(\{n\geq N\})=\frac{1}{\sqrt{N+1}}.\]
		Then $c$ is coherent---in fact, countably coherent (see Definition \ref{countablecoherent})---but $\mathscr{I}(c,w)=\infty$ for all $w\in W$ when $\mathscr{I}$ is the generalized Brier score. So any omniscient credence function strongly dominates $c$.
	\end{example}
	In fact, the classic example of a merely finitely additive probability function---the 0-1 function defined on the finite-cofinite algebra over $\mathbb{N}$ taking value 0 on finite sets---restricts to a coherent dominated credence function. 
		\begin{example}\label{problemswithinfinites2}
		Let $\mathcal{F}=\{\{n\leq N:n\in \mathbb{N}\}:N\in \mathbb{N}\}$ be an opinion set over $\mathbb{N}$ (including zero). Let 
		\[c(\{n\leq N\})=0.\]
		Then $c$ is coherent---as well as finitely supported and not countably coherent---but $\mathscr{I}(c,w)=\infty$ for all $w\in W$ when $\mathscr{I}$ is the generalized Brier score. So any omniscient credence function strongly dominates $c$.
	\end{example}
	The goal of this section is to characterize the opinion sets and inaccuracy measures for which some variant of Theorem \ref{step3result} holds. We extend Theorem \ref{step3result} by proving dominance results for \textit{countably coherent} credence functions and using an opinion set compactification construction to transfer these results to merely coherent credence functions. At points, our results will only apply to the generalized Brier score. We conjecture that any such result extends to any generalized legitimate inaccuracy measure. In any case, this is a well motivated restriction since the Brier score has been defended by many---including \cite{horwich}, \cite{maher}, \cite{Joyce}, and \cite{leitgebi}---as being a particularly appropriate way to measure inaccuracy.
	
    A summary of the main results from Sections \ref{countablecase1}-\ref{countablecase2} can be found in Figure \ref{summary}. 
     
     \textit{Throughout the rest of Section \ref{countablecase2} we assume that the opinion set $\mathcal{F}$ is countably infinite, unless otherwise stated.}
	
	\subsection{Countable Coherence}
	We begin by introducing the notion of a \textit{countably coherent} credence function and establishing a characterization theorem regarding countable coherence on \textit{countably discriminating} opinion sets which extends a result of \cite{definetti}.
	\begin{definition}\label{CC}
		For an opinion set $\mathcal{F}\subseteq \mathcal{P}(W)$, we define an equivalence relation $\sim$ on $W$ such that $w\sim w'$ if and only if $\{p\in \mathcal{F}:w\in p\}=\{p\in \mathcal{F}:w'\in p\}$. We call the set of equivalence classes of $W$ the \textit{quotient of $W$ relative to $\mathcal{F}$}. If the quotient of $W$ relative to $\mathcal{F}$ is countable, then we call $\mathcal{F}$ \textit{countably discriminating}.
	\end{definition}
	\noindent Clearly, any countable opinion set over a countable set of worlds is countably discriminating. 
	
	The following characterization of the coherent credence functions on finite opinion sets is due to \cite{definetti}. Recall $\mathcal{V}_{\mathcal{F}}$ is the set of omniscient credence functions on $\mathcal{F}$, which is finite when $\mathcal{F}$ is finite.
	\begin{theorem}[\citealt{definetti}]\label{finitecoherentcharacterization}
		$c$ is a coherent credence function on a finite opinion set $\mathcal{F}$ if and only if there are $\lambda_w\in [0,1]$ with $\sum_{v_w\in \mathcal{V}_\mathcal{F}}\lambda_w=1$ such that
		\[c(p)=\sum_{v_w\in \mathcal{V}_{\mathcal{F}}}\lambda_w v_{w}(p)\]
		for all $p\in \mathcal{F}$.
	\end{theorem}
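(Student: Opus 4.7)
The plan is to exploit the finiteness of $\mathcal{F}$, which forces the algebra $\mathcal{A}$ it generates to be finite, thereby reducing the problem to the elementary fact that a finitely additive probability on a finite algebra is determined by its values on the atoms.

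For the easy (\emph{if}) direction, I would assume $c(p) = \sum_w \lambda_w v_w(p)$ with $\lambda_w \geq 0$ summing to one, and directly exhibit an extension. Each omniscient $v_w$ extends canonically to a $\{0,1\}$-valued finitely additive probability $v_w^*$ on any algebra $\mathcal{F}^* \supseteq \mathcal{F}$ via $v_w^*(q) = 1$ iff $w \in q$, and the convex combination $c^* := \sum_w \lambda_w v_w^*$ is then a finitely additive probability on $\mathcal{F}^*$ restricting to $c$, witnessing coherence.

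For the harder (\emph{only if}) direction, I would start with a witnessing algebra $\mathcal{F}^* \supseteq \mathcal{F}$ and probability function $c^*:\mathcal{F}^*\to[0,1]$, and restrict $c^*$ to the subalgebra $\mathcal{A}$ generated by $\mathcal{F}$, which is finite since $\mathcal{F}$ is. The atoms of $\mathcal{A}$ are the non-empty intersections $\bigcap_{p \in \mathcal{F}} p^{\epsilon(p)}$ where $\epsilon:\mathcal{F}\to \{0,1\}$, writing $p^1 = p$ and $p^0 = W \setminus p$. The key observation is that the non-empty atoms are precisely the $\sim$-equivalence classes from Definition \ref{CC}: every world in an atom $A$ induces the same omniscient credence function, and distinct atoms yield distinct elements of $\mathcal{V}_{\mathcal{F}}$, so the assignment $A \mapsto v_{w_A}$ (for any choice of representative $w_A \in A$) is a bijection from the set of non-empty atoms onto $\mathcal{V}_{\mathcal{F}}$.

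Setting $\lambda_{w_A} := c^*(A)$ then produces weights in $[0,1]$ summing to $c^*(W) = 1$ by finite additivity, and since each $p \in \mathcal{F}$ is the disjoint union of the atoms $A \subseteq p$, finite additivity also yields $c(p) = c^*(p) = \sum_{A \subseteq p} c^*(A) = \sum_{v_w \in \mathcal{V}_{\mathcal{F}}} \lambda_w v_w(p)$, using that $v_{w_A}(p) = 1$ iff $A \subseteq p$. I expect no real obstacle here — this is a classical de Finetti-style fact whose only subtlety is in correctly matching the atoms of $\mathcal{A}$ to elements of $\mathcal{V}_{\mathcal{F}}$; once that bijection is in place the rest is bookkeeping with finite additivity.
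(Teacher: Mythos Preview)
Your argument is correct. Note, however, that the paper does not itself prove this theorem: it is stated as de Finetti's result and invoked without proof. That said, your approach is exactly the finite analogue of the argument the paper gives for the countable generalization (Proposition~\ref{countablerep}): there too one forms the partition $\mathcal{X}$ of $W$ by nonempty intersections $\bigcap_i p_i^*$, establishes the bijection between $\mathcal{X}$ and $\mathcal{V}_\mathcal{F}$, and then reads off the weights $\lambda_w$ as the values of the extended probability on the atoms. So while there is no proof in the paper to compare against directly, your proposal aligns with the method the paper uses for the more general statement.
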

	
	Theorem \ref{finitecoherentcharacterization} is integral to \citeauthor{predd}'s proof that coherence is sufficient to avoid dominance in Theorem \ref{step3result}. We now show \citeauthor{definetti}'s characterization of the coherent credence functions on finite opinion sets extends to countably coherent credence functions on countably infinite opinion sets that are countably discriminating.
	
	\begin{definition}\label{sigmaalgebra}
		A \textit{$\sigma$-algebra} over $W$ is a subset $\mathcal{F}^*\subseteq \mathcal{P}(W)$ such that: 
		\begin{enumerate}
			\item $W\in \mathcal{F}^*$;
			\item if $\{p_i\}_{i=1}^\infty\subseteq \mathcal{F}^*$, then $\bigcup_{i=1}^\infty p_i\in \mathcal{F}^*$;
			\item if $p\in \mathcal{F}^*$, then $W\setminus p\in \mathcal{F}^*$.
		\end{enumerate}
	\end{definition}
	\begin{definition}\label{countablecoherent}
		Let a credence function $c$ be \textit{countably coherent} if $c$ extends to a countably additive probability function on a $\sigma$-algebra $\mathcal{F}^*$ containing $\mathcal{F}$.\footnote{\label{sigma}Note that if $c$ is countably coherent on $\mathcal{F}$, then $c$ extends to a countably additive probability function on $\sigma(\mathcal{F})$, the $\sigma$-algebra generated by $\mathcal{F}$.} That is, there is a $c^*:\mathcal{F}^*\to [0,1]$ such that:
		\begin{enumerate}
			\item $c^*(p)=c(p)$ for all $p\in \mathcal{F}$;
			\item $c^*(\bigcup_{i=1}^\infty p_i)=\sum_{i=1}^\infty c^*(p_i)$ for $\{p_i\}_{i=1}^\infty\subseteq \mathcal{F}^*$ with $p_i\cap p_j=\varnothing$ for $i\neq j$;
			\item $c^*(W)=1$.
		\end{enumerate}
		Otherwise, a credence function is \textit{countably incoherent}. 
	\end{definition}
	\begin{restatable}{proposition}{countablerep}\label{countablerep} Let $\mathcal{F}$ be a countably infinite opinion set that is countably discriminating (so $V_{\mathcal{F}}$ is countable). Then a credence function $c$ is countably coherent if and only if there are  $\lambda_{v_w}\in [0,1]$ with $\sum_{v_w\in \mathcal{V}_\mathcal{F}}\lambda_{v_w}=1$ such that
		\[c(p)=\sum_{v_w\in \mathcal{V}_{\mathcal{F}}} \lambda_{v_w} v_w(p)\]
		for all $p\in \mathcal{F}$.
	\end{restatable}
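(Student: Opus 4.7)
The plan is to prove both directions by exploiting a preliminary observation: because $\mathcal{F}$ is countable, every $\sim$-equivalence class $[w]$ equals
\[\bigcap_{p\in\mathcal{F},\,w\in p}p\ \cap\ \bigcap_{p\in\mathcal{F},\,w\notin p}(W\setminus p)\]
and therefore lies in $\sigma(\mathcal{F})$. Moreover, the family of subsets of $W$ that are unions of equivalence classes is itself a $\sigma$-algebra containing $\mathcal{F}$, so every element of $\sigma(\mathcal{F})$ is such a union. Consequently, each omniscient $v_w\in\mathcal{V}_\mathcal{F}$ extends uniquely to a $\{0,1\}$-valued countably additive probability measure $v_w^*$ on $\sigma(\mathcal{F})$, given by $v_w^*(A)=1$ iff $[w]\subseteq A$.

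For the sufficiency direction, given $\lambda_w\in[0,1]$ with $\sum_{v_w\in\mathcal{V}_\mathcal{F}}\lambda_w=1$, I would set $c^*(A):=\sum_{v_w\in\mathcal{V}_\mathcal{F}}\lambda_w v_w^*(A)$ on $\sigma(\mathcal{F})$. Because $\mathcal{V}_\mathcal{F}$ is countable, a routine exchange of non-negative double sums shows that $c^*$ is countably additive, while $c^*(W)=\sum\lambda_w=1$. By construction $c^*$ restricts to $c$ on $\mathcal{F}$, so $c$ is countably coherent.

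For the necessity direction, suppose $c$ is countably coherent. By the footnote to Definition \ref{countablecoherent}, we may assume the extension $c^*$ has domain $\sigma(\mathcal{F})$. Each $[w]$ lies in $\sigma(\mathcal{F})$ by the preliminary observation, so define $\lambda_w:=c^*([w])$. Applying countable additivity of $c^*$ to the countable partition of $W$ into equivalence classes gives $\sum_{v_w\in\mathcal{V}_\mathcal{F}}\lambda_w=c^*(W)=1$. For any $p\in\mathcal{F}$, the set $p$ is the disjoint union of the (countably many) equivalence classes it contains, so $c(p)=c^*(p)=\sum_{[w]\subseteq p}\lambda_w=\sum_{v_w\in\mathcal{V}_\mathcal{F}}\lambda_w v_w(p)$, as desired.

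The key step---and the only one that is not bookkeeping---is the preliminary observation that $[w]\in\sigma(\mathcal{F})$. It requires both that $\mathcal{F}$ be countable (so that the representation of $[w]$ as an intersection uses only countably many sets) and that $\mathcal{F}$ be countably discriminating (so that the representing sum ranges over countably many $v_w$, matching the countable additivity being invoked). Without countable discrimination, the representation would need to be an uncountable combination and the strategy would break down.
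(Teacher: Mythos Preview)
Your proof is correct and follows essentially the same approach as the paper's: both identify the $\sim$-equivalence classes (the paper's $e_j$, your $[w]$) as countable intersections lying in $\sigma(\mathcal{F})$, define the $\lambda_w$ as their measures under the countably additive extension for the forward direction, and take the convex combination of the Dirac-type extensions $v_w^*$ (the paper's $\bar{v}_{w_j}$) for the reverse direction. Your presentation is slightly more streamlined in isolating the key observation $[w]\in\sigma(\mathcal{F})$ up front, but the argument is the same.
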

		\begin{proof}
			We adapt the proof of Proposition 1 in \citealt{predd}. Let $\mathcal{F}=~\{p_1,p_2,\ldots\}$. Let $\mathcal{X}$ be the collection of all nonempty sets of the form $\bigcap_{i=1}^\infty p_i^*$ where $p_i^*$ is either $p_i$ or $p_i^c$. Then $\mathcal{X}$ partitions $W$. Also, $\mathcal{X}$ is in bijection with $\mathcal{V}_{\mathcal{F}}$, the set of omniscient credence functions. 
		
		Indeed, let $f$ map $v_w$ to $\bigcap_{i=1}^\infty p_i^*$ where $p_i^*=p_i$ if $v_w(p_i)=1$ and $p_i^*=p_i^c$ otherwise. Then for each $w$, $w\in f(v_w)$ and so $f(v_w)\in \mathcal{X}$. Note $f$ is onto. Indeed, let $w\in \bigcap_{i=1}^\infty p_i^*$, where $\bigcap_{i=1}^\infty p_i^*\in \mathcal{X}$. Then $f(v_w)=\bigcap_{i=1}^\infty p_i^*$. Also, $f$ is injective. Indeed, assume $f(v_w)=f(v_{w'})$. Then \[f(v_w)=\bigcap_{i=1}^\infty p^1_i=\bigcap_{i=1}^\infty p^2_i= f(v_{w'})\]
		for $p_i^j=p_i$ or $p_i^j=p_i^c$ for all $i\in \mathbb{N}$ and $j\in\{1,2\}$. If $p^1_i\neq p^2_i$ for some $i$, then without loss of generality we may assume $p^1_i=p_i$ and $p^2_i=p_i^c$. So $w\in p_i^1$ but $w\notin p_i^2$ and thus $w\notin \bigcap_{i=1}^\infty p_i^2$. But $w\in \bigcap_{i=1}^\infty p_i^1$ by definition of $f$ and so $\bigcap_{i=1}^\infty p_i^1\neq \bigcap_{i=1}^\infty p_i^2$, which is a contradiction. It follows that $p^1_i=p^2_i$ for all $i$, but then by definition of $f$, this implies $v_w(p_i)=1$ if and only if $v_{w'}(p_i)=1$ for all $i$ and so $v_w=v_{w'}$.

		It is easy to see that since $\mathcal{F}$ is countably discriminating, $\mathcal{V}_{\mathcal{F}}$ is countable. It follows that $\mathcal{X}$ is countable. Enumerate the elements of $\mathcal{V}_{\mathcal{F}}$ and $\mathcal{X}$ by $v_{w_1}, v_{w_2},\ldots$ and $e_1,e_2,\ldots$, respectively, such that $f^{-1}(e_j)=v_{w_j}$. We have that $p_i$ is the disjoint union of $e_j$ such that $e_j\subseteq p_i$, or equivalently the $e_j$ where $f^{-1}(e_j)(p_i)=1$. Note i) for any countably additive probability function $\mu$ on a $\sigma$-algebra containing $\mathcal{F}$ (and thus containing $\mathcal{X}$) and any $p_i\in \mathcal{F}$:
		\[\mu(p_i)=\sum_{j=1}^\infty \mu(e_j) f^{-1}(e_j)(p_i). \]
		
		Now we prove the equivalence. Assume $c$ is countably coherent. By the definition of countable coherence, $c$ extends to a countably additive probability function $\mu$ on a $\sigma$-algebra containing $\mathcal{F}$. Then by i), 
		\[c(p_i)=\mu(p_i)=\sum_{j=1}^\infty \mu(e_j) f^{-1}(e_j)(p_i)\]
		for all $p_i\in \mathcal{F}$. But since $\mu(e_j)$ are non-negative and sum to $1$ (since the $e_j$'s partition $W$ and $\mu$ is a countably additive probability function), we have that $c$ has the form stated.
		
		Now assume $c(p_i)=\sum_{j=1}^\infty \lambda_jv_{w_j}(p_i)$ for all $i$ where $\sum_{j=1}^\infty \lambda_j=1$. Let $\sigma(\mathcal{F})$ be the smallest $\sigma$-algebra on $W$ containing $\mathcal{F}$. Then it is easy to check that the function on $\sigma(\mathcal{F})$ defined by $\bar{v}_{w_j}(p)=1$ if and only if $w_j\in p$ extends $v_{w_j}$ and is a countably additive probability function on $\sigma(\mathcal{F})$. Then $\sum_{j=1}^\infty \lambda_j \bar{v}_{w_j}$ is a countably additive probability function on $\sigma(\mathcal{F})$ since a countable sum of countably additive probability functions with coefficients that sum to $1$ is a countably additive probability function. Since 
		\[c(p_i)= \sum_{j=1}^\infty \lambda_jv_{w_j}(p_i)=\sum_{j=1}^\infty \lambda_i \bar{v}_{w_j}(p_i)\]
		for all $i$, it follows that $c$ extends to a countably additive probability function on a $\sigma$-algebra containing $\mathcal{F}$.
	\end{proof}

	\subsection{Compactification of an Opinion Space}\label{compactsection}
	In this section, we introduce the compactification construction of what we call an \textit{opinion space}. The construction will be relevant to transferring dominance results for countably coherent credence functions to merely coherent credence functions, the reason being that merely coherent credence functions become countably coherent if the underlying set of worlds is ``compactified''.
	\begin{definition}
		An \textit{opinion space} is a pair $(W,\mathcal{F})$, where $W$ is a nonempty set and $\mathcal{F}\subseteq \mathcal{P}(W)$.
	\end{definition}
	\noindent From here on out we will speak in terms of opinion spaces as opposed to opinion sets in order to keep track of the underlying set of worlds. We continue to assume that $\mathcal{F}$ is countably infinite.
	
	\cite{borkar} proved that the opinion spaces which satisfy a certain compactness property are precisely those where the set of coherent credence functions and the set of countably coherent credence functions coincide.\footnote{\citeauthor{borkar} do not restrict attention to opinion spaces where $\mathcal{F}$ is countably infinite, and it is easy to see that there are analogues of the following compactification results in the uncountable setting. However, we continue to restrict attention to the countably infinite setting since we have not yet extended the accuracy framework beyond that.}
	\begin{definition}\label{compactdef}
		Let $(W,\mathcal{F})$ be an opinion space. Let $f(n)\in \{0,1\}$ and set $p_n^{f(n)}=p_n$ if $f(n)=0$ and $p_n^{f(n)}=p_n^c$ if $f(n)=1$. Then $(W,\mathcal{F})$ is \textit{compact} if for any choice of $\{p_n\}_{n=1}^\infty\subseteq \mathcal{F}$ and $f:\mathbb{N}\to \{0,1\}$, if $\bigcap_{n=1}^Np_n^{f(n)}$ is nonempty for every $N$, then $\bigcap_{n=1}^\infty p_n^{f(n)}$ is nonempty.
	\end{definition}
	\noindent As an example, note that the opinion spaces from Examples \ref{problemswithinfinities} and \ref{problemswithinfinites2} are not compact. Indeed, for the first example $\bigcap_{n=1}^\infty p_n=\varnothing$ and yet every finite subset of $\mathcal{F}$ has nonempty intersection; for the second example, $\bigcap_{n=1}^\infty p_n^c=\varnothing$ while $\bigcap_{n=1}^Np_n^c\neq\varnothing$ for every $N$.
	\begin{theorem}[\citealt{borkar}]\label{borkerresult}The following are equivalent:
	\begin{enumerate}
	    \item $(W,\mathcal{F})$ is compact;
	\item for every credence function $c$ on $(W,\mathcal{F})$, $c$ is coherent if and only if $c$ is countably coherent.
	\end{enumerate}
	\end{theorem}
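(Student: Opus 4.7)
The plan is to handle the two directions separately, noting first that countable coherence trivially implies coherence, so the real content in the second condition is the implication coherent $\Rightarrow$ countably coherent.

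For $(1) \Rightarrow (2)$: Given a coherent $c$, I would extend it to a finitely additive probability $c^*$ on the algebra $\mathcal{A}(\mathcal{F})$ generated by $\mathcal{F}$ (by restricting whatever algebra-extension coherence supplies) and reduce countable coherence to showing that $c^*$ is continuous from above at $\varnothing$ on $\mathcal{A}(\mathcal{F})$; Carath\'eodory's extension theorem then yields a countably additive probability on $\sigma(\mathcal{F})$ agreeing with $c$ on $\mathcal{F}$. The key lemma I would prove is a strong form of continuity: under the compactness hypothesis, if $\{A_n\} \subseteq \mathcal{A}(\mathcal{F})$ is decreasing with $\bigcap_n A_n = \varnothing$, then $A_n = \varnothing$ for some $n$. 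To prove it, fix for each $n$ a finite set $\mathcal{F}_n \subseteq \mathcal{F}$ with $\mathcal{F}_n \subseteq \mathcal{F}_{n+1}$ such that $A_1, \ldots, A_n$ lie in the finite algebra generated by $\mathcal{F}_n$. Then each $A_n$ is a finite disjoint union of $\mathcal{F}_n$-atoms---intersections of the form $\bigcap_{p \in \mathcal{F}_n} p^{g(p)}$ with $g:\mathcal{F}_n \to \{0,1\}$. Build a tree whose level-$n$ nodes are the nonempty $\mathcal{F}_n$-atoms contained in $A_n$, with parent-child relation given by inclusion (a level-$(n{+}1)$ atom has a unique parent, and that parent lies in $A_n$ because $A_{n+1} \subseteq A_n$). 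If every $A_n$ were nonempty, every level would be nonempty and the tree would be finitely branching, so K\"onig's lemma supplies an infinite path of nested nonempty atoms, yielding a consistent function $f:\bigcup_n \mathcal{F}_n \to \{0,1\}$ whose finite restrictions have nonempty intersection. Enumerating $\bigcup_n \mathcal{F}_n$ as $\{p_k\}$ and applying Definition~\ref{compactdef} to the sequence $(p_k, f(p_k))$ produces a world $w \in \bigcap_k p_k^{f(p_k)}$, which lies in the atom at every level and hence in every $A_n$, contradicting $\bigcap_n A_n = \varnothing$.

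For $(2) \Rightarrow (1)$: I prove the contrapositive. Suppose $(W,\mathcal{F})$ is not compact, witnessed by $\{p_n\} \subseteq \mathcal{F}$ and $f:\mathbb{N} \to \{0,1\}$ such that $E_n := p_n^{f(n)}$ has the finite intersection property but $\bigcap_n E_n = \varnothing$. Extend the filter generated by $\{E_n\}$ to an ultrafilter $\mathcal{U}$ on $\mathcal{P}(W)$ and define $c(p) := \mathbf{1}[p \in \mathcal{U}]$ for $p \in \mathcal{F}$. Since $\mathcal{U}$ induces a finitely additive $\{0,1\}$-valued probability on $\mathcal{P}(W)$ that restricts to $c$, the function $c$ is coherent. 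But $c$ is not countably coherent: any countably additive extension $c^*$ to a $\sigma$-algebra containing $\mathcal{F}$ must satisfy $c^*(E_n) = 1$ for each $n$ (since $p_n \in \mathcal{F}$ and $E_n$ is either $p_n$ or its complement), so by finite additivity $c^*(F_n) = 1$ for $F_n := \bigcap_{k \leq n} E_k$; yet $F_n \downarrow \varnothing$ forces $c^*(F_n) \to 0$ by continuity from above---a contradiction.

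The main obstacle is the K\"onig's lemma argument in $(1)$: one must carefully arrange the $\mathcal{F}_n$ to be nested, rewrite each $A_k$ (for $k \leq n$) as a disjoint union of atoms over the enriched algebra at level $n$ so that the tree has well-defined parent-child relations, and verify that the resulting path yields a sequence of exactly the form required by Definition~\ref{compactdef}. The remaining components---restricting the coherent extension to $\mathcal{A}(\mathcal{F})$, invoking Carath\'eodory, and the ultrafilter construction in $(2)$---are standard measure-theoretic moves.
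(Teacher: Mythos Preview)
The paper does not supply a proof of this theorem; it is quoted from Borkar et al.\ as a cited result and used as a black box. Your proposal therefore cannot be compared to anything in the paper, but it is correct and self-contained. For $(1)\Rightarrow(2)$, reducing to the lemma that any decreasing sequence in $\mathcal{A}(\mathcal{F})$ with empty intersection is eventually empty, and then invoking Carath\'eodory, is the standard route; your K\"onig's-lemma tree of atoms is sound---the parent of a level-$(n{+}1)$ atom does lie in $A_n$ because $A_n$ is a union of $\mathcal{F}_n$-atoms and the (nonempty) child is contained in $A_{n+1}\subseteq A_n$, and the infinite branch glues to a function on $\bigcup_n\mathcal{F}_n$ whose finite initial intersections are nonempty, so Definition~\ref{compactdef} applies. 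For $(2)\Rightarrow(1)$, the ultrafilter construction works: since $\mathcal{U}$ contains each $E_n$, it contains $p_n$ exactly when $f(n)=0$, so $c(p_n)=1-f(n)$ and any countably additive extension would assign $c^*(E_n)=1$ in both cases, forcing $c^*(\bigcap_{k\le n}E_k)=1$ while $\bigcap_{k\le n}E_k\downarrow\varnothing$.
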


	We now show how to turn any space into a compact space and, in light of Theorem \ref{borkerresult}, any coherent credence function into a countably coherent credence function. Let $(W,\mathcal{F})$ be an opinion space. Let $S$ denote the set of sequences of the form $\{p_n^{f(n)}\}$ (as in Definition \ref{compactdef}) such that $\bigcap_{n=1}^Np_n^{f(n)}\neq \varnothing$ for every $N$ but  $\bigcap_{n=1}^\infty p_n^{f(n)}=\varnothing$. Define $W^*=W\cup \{x_s:s\in S\}$, where each $x_s$ is a formal point corresponding to the element $s\in S$. Define $\mathcal{F}^*\subseteq \mathcal{P}(W^*)$ as follows: for each $p\in \mathcal{F}$, let $S_p$ denote the set of sequences $s$ of the form $\{p_n^{f(n)}\}$ (as in Definition \ref{compactdef}) such that $s\in S$, $p_n=p$ for some $n$, and $f(n)=0$. Then define
	\[p^*=p\cup \{x_s:s\in S_p\}.\]
	Finally, let $\mathcal{F}^*=\{p^*:p\in \mathcal{F}\}$. We call $(W^*, \mathcal{F}^*)$ the \textit{compactification} of $(W,\mathcal{F})$. We always denote the compactification of $(W,\mathcal{F})$ by $(W^*,\mathcal{F}^*)$. Further, we let $\Psi$ denote the natural bijection from $\mathcal{F}$ to $\mathcal{F}^*$ given by $\Psi(p)=p^*$.

	We first note that $(W^*,\mathcal{F}^*)$ is in fact compact.
	\begin{restatable}{lemma}{iscompact}\label{iscompact} For $(W,\mathcal{F})$ an opinion space,  $(W^*,\mathcal{F}^*)$ is compact.
	\end{restatable}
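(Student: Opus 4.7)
The plan is to verify compactness directly. Given $\{p_n\}_{n=1}^{\infty}\subseteq\mathcal{F}$ and $g:\mathbb{N}\to\{0,1\}$ with $\bigcap_{n=1}^{N}(p_n^{*})^{g(n)}\neq\varnothing$ in $W^{*}$ for every $N$, I will exhibit a point in the infinite intersection. Throughout I will use two basic correspondences: for $w\in W$, $w\in p^{*}$ iff $w\in p$; and for an $x$-point $x_{s}$ (with $s\in S$), $x_{s}\in p^{*}$ iff $s\in S_{p}$, i.e., $p$ appears positively in $s$. The hypothesis also forces $\{p_n : g(n)=0\}$ and $\{p_m : g(m)=1\}$ to be disjoint in $\mathcal{F}$, since otherwise some two-term intersection equals $p^{*}\cap(W^{*}\setminus p^{*})=\varnothing$.

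First, consider the case in which $\bigcap_{n=1}^{N}p_n^{g(n)}\neq\varnothing$ in $W$ for every $N$. If the full intersection in $W$ is already nonempty, any of its points lies in $\bigcap_{n=1}^{\infty}(p_n^{*})^{g(n)}$. Otherwise the literal sequence $s:=\{p_n^{g(n)}\}_{n=1}^{\infty}$ lies in $S$, and I claim $x_{s}$ is the desired witness: when $g(n)=0$, the index $n$ itself certifies $s\in S_{p_n}$; when $g(n)=1$, the disjointness above prevents $p_n$ from appearing positively elsewhere in $s$ (otherwise some finite sub-intersection would equal $p_n\cap p_n^{c}=\varnothing$ in $W$).

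In the remaining case some $\bigcap_{n=1}^{N_0}p_n^{g(n)}=\varnothing$ in $W$, so no $W$-points can witness the full intersection, and every point of $\bigcap_{n=1}^{N}(p_n^{*})^{g(n)}$ for $N\geq N_0$ must be of the form $x_{t_N}$ with $t_N\in S$ satisfying the required positivity pattern for $n\leq N$. The plan is to build a single sequence $\hat{s}\in S$ in which every $p_n$ with $g(n)=0$ appears positively and no $p_m$ with $g(m)=1$ appears positively, so that $x_{\hat{s}}$ is the witness. The candidate literal pool is $\{p_n : g(n)=0\}\cup\{p_m^{c} : m\in G\}$ for a suitable $G\subseteq g^{-1}(1)$, and the requirements $\hat{s}\in S$ translate into the finite intersection property in $W$ together with the emptiness of the combined infinite intersection---equivalently, $A\cap\bigcap_{m\in G}p_m^{c}=\varnothing$ for $A:=\bigcap_{n:g(n)=0}p_n$.

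The hard part is constructing $G$. The strategy is an inductive maximal-extension argument: enumerate $g^{-1}(1)$ and add each index $m$ to $G$ whenever doing so preserves the finite intersection property of the growing family $\{p_n : g(n)=0\}\cup\{p_m^{c}\}_{m\in G}$. The nontrivial verification is that the resulting $G^{*}$ kills $A$, which relies on the hypothesis that every $\bigcap_{n=1}^{N}(p_n^{*})^{g(n)}$ is nonempty in $W^{*}$: if some $y\in A\setminus\bigcup_{m\in G^{*}}p_m$ existed, then $y\in p_{m^{*}}$ for an index $m^{*}\in g^{-1}(1)\setminus G^{*}$, and the reason $m^{*}$ was rejected must be traced, via the literal structure of the witnesses $t_N$ for sufficiently large $N$, to a contradiction with $t_N\in S$. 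Once $G^{*}$ is in hand, enumerating the literals $\{p_n : g(n)=0\}\cup\{p_m^{c} : m\in G^{*}\}$ (with repetitions as needed to form an infinite sequence) yields $\hat{s}\in S$, and $x_{\hat{s}}$ is the desired witness by the correspondences above.
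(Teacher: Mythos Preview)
Your Case 1 is essentially the paper's Case 1, handled in the same way (with a bit more care in checking that $x_s\in(p_n^*)^c$ for $g(n)=1$, which the paper leaves implicit).

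Your Case 2, however, diverges sharply from the paper and contains a real gap. The paper does not try to construct a witness at all in Case 2; it argues instead that Case 2 is vacuous---that if $\bigcap_{n\le N}\Psi(p_n)^{f(n)}$ contains no $W$-point, then it is already empty in $W^*$, because any $x$-point in it would force $\{p_n^{f(n)}\}_{n\le N}$ to be an initial segment of some sequence with the finite intersection property. You instead attempt to build an explicit $\hat{s}\in S$ via a greedy choice of $G^*\subseteq g^{-1}(1)$, but you do not actually prove the crucial claim that $A\cap\bigcap_{m\in G^*}p_m^c=\varnothing$. Your sketch says: take $y$ in this set, find $m^*\in g^{-1}(1)\setminus G^*$ with $y\in p_{m^*}$, and then ``trace the reason $m^*$ was rejected via the literal structure of the witnesses $t_N$ to a contradiction with $t_N\in S$.'' But the rejection of $m^*$ only yields finite $F_0\subseteq g^{-1}(0)$ and $F_1\subseteq G^*$ with $\bigcap_{n\in F_0}p_n\cap\bigcap_{m\in F_1}p_m^c\subseteq p_{m^*}$. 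From the witnesses $t_N$ you know that each $p_n$ with $n\in F_0$ appears \emph{positively} in $t_N$, but you have no control over whether $p_m$ with $m\in F_1$ appears \emph{negatively} in $t_N$---the hypothesis $x_{t_N}\in(p_m^*)^c$ only says $p_m$ does not appear positively, which is compatible with $p_m$ not appearing at all. So you cannot conclude that any finite sub-intersection of $t_N$ lies inside $\bigcap_{n\in F_0}p_n\cap\bigcap_{m\in F_1}p_m^c$, and hence cannot force it inside $p_{m^*}$ to contradict $t_N\in S$. The greedy construction is not tied to the $t_N$'s in any way, so there is no mechanism by which the rejection data and the witness data interact.

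In short: you have correctly identified what needs to be built in Case 2, but the verification that the greedy $G^*$ does the job is the entire content of that case, and it is asserted rather than proved. Either a different construction of $G^*$ (one that uses the witnesses $t_N$ directly, e.g.\ via a diagonal or K\"onig-type argument on the positive/negative patterns of the $t_N$'s) or a substantially more detailed argument for the greedy version is needed.
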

		\begin{proof}
		Let $\{\Psi(p_n)^{f(n)}\}_{n=1}^\infty$ be a sequence of elements of $\mathcal{F}^*$ or their complements as in Definition \ref{compactdef}. Case 1: for each $N$ there is some $w_N\in W$ such that $w_N\in \bigcap_{n=1}^N\Psi(p_n)^{f(n)}$. Then since i) $\Psi(p)\cap W=p$ and ii) $\Psi(p)^c\cap W =p^c$ for any $p\in \mathcal{F}$, it follows that $w_N\in \bigcap_{n=1}^N p^{f(n)}_n$ for each $N$. If there is some $w'\in W$ with $w'\in \bigcap_{n=1}^\infty p^{f(n)}_n$ then by i) and ii) it follows that $w'\in \bigcap_{n=1}^\infty\Psi(p_n)^{f(n)}$. Otherwise, by construction, we defined some $x_s$ to be such that $x_s \in \bigcap_{n=1}^\infty \Psi(p_n)^{f(n)}$. In either case, we are done. Case 2: there is some $N$ such that $\bigcap_{n=1}^N\Psi(p_n)^{f(n)}\subseteq W^*\setminus W$. I claim this implies that $\bigcap_{n=1}^N\Psi(p_n)^{f(n)}=\varnothing$. Indeed, if there were some $w\in W^*\setminus W$ such that $w\in\bigcap_{n=1}^N\Psi(p_n)^{f(n)}$, then that is because $\{p_n^{f(n)}\}_{n=1}^N$ is an initial sequence of some sequence $\{\bar{p}_n^{\bar{f}(n)}\}_{n=1}^\infty$  such that $\bigcap_{n=1}^l\bar{p}^{\bar{f}(n)}_n\neq \varnothing$ for each $l$ and thus, in particular, $\bigcap_{n=1}^Np^{f(n)}_n\neq \varnothing$. So there is some $w\in W$ such that $w\in \bigcap_{n=1}^N\Psi(p_n)^{f(n)}$ by i) and ii), which is a contradiction. Thus we have established that $(W^*,\mathcal{F}^*)$ is compact.
	\end{proof}

		Next we note that, as suggested, we can naturally turn a coherent credence function into a countably coherent credence function by compactifying the underlying opinion space.
		
	
	\begin{restatable}{lemma}{connectinglemma}\label{connectinglemma}
		Let $(W,\mathcal{F})$ be an opinion space and $c$ a coherent credence function on $(W,\mathcal{F})$. Let $(W^*,\mathcal{F}^*)$ be the compactification of $(W,\mathcal{F})$ and define $c^*(\Psi(p)):=c(p)$ for each $p\in \mathcal{F}$. Then $c^*$ is a countably coherent credence function on $(W^*,\mathcal{F}^*)$ and $\mathscr{I}(c,w)=\mathscr{I}(c^*,w)$ for $w\in W$.
	\end{restatable}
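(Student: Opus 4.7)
The lemma contains two claims: (i) that $c^*$ is countably coherent on $(W^*,\mathcal{F}^*)$, and (ii) that $\mathscr{I}(c,w)=\mathscr{I}(c^*,w)$ for every $w\in W$. The guiding idea for (i) is to exploit Lemma \ref{iscompact} and Borkar's theorem (Theorem \ref{borkerresult}): since $(W^*,\mathcal{F}^*)$ is compact, any \emph{coherent} credence function on it is automatically countably coherent. So it suffices to show that $c^*$ is coherent, which one can establish directly by pushing an extension of $c$ forward to $W^*$.

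Concretely, I would first extend $c$ to a finitely additive probability $\mu$ on all of $\mathcal{P}(W)$ using standard Boolean-algebra extension results (Horn--Tarski, or \citeauthor{definetti}'s own argument). Then define $\mu^*:\mathcal{P}(W^*)\to[0,1]$ by $\mu^*(A):=\mu(A\cap W)$. A quick check gives finite additivity---disjoint subsets of $W^*$ restrict to disjoint subsets of $W$---and normalization $\mu^*(W^*)=\mu(W)=1$. Using the defining identity $p^*\cap W=p$ from the construction of the compactification, one obtains $\mu^*(p^*)=\mu(p)=c(p)=c^*(p^*)$ for every $p\in\mathcal{F}$. Hence $\mu^*$ witnesses coherence of $c^*$, and Borkar's theorem then upgrades this to countable coherence.

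For (ii), the key observation is that the adjoined points $\{x_s:s\in S\}$ are disjoint from $W$, so for any $w\in W$ and any $p\in\mathcal{F}$, the condition $w\in p^*$ is equivalent to $w\in p$. Consequently the omniscient credence $v^*_w$ on $(W^*,\mathcal{F}^*)$ satisfies $v^*_w(p^*)=v_w(p)$. Together with $c^*(p^*)=c(p)$ by definition, the generalized quasi-additive Bregman divergence defining $\mathscr{I}$ gives, summand by summand, $\mathscr{I}(c^*,w)=\mathscr{I}(c,w)$.

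I do not anticipate a serious obstacle. The only subtlety worth flagging is well-definedness of $c^*$ on $\mathcal{F}^*$: this requires $\Psi:p\mapsto p^*$ to be injective, which follows because one recovers $p$ as $p^*\cap W$. Everything else is a short verification, and the heart of the argument---reducing coherence on the compactification to coherence upstairs---rides entirely on Theorem \ref{borkerresult} and the trivial ``restrict to $W$'' recipe for transporting measures.
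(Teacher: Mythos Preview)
Your argument is correct and shares the paper's overall architecture: reduce countable coherence to mere coherence via Lemma \ref{iscompact} and Theorem \ref{borkerresult}, then verify coherence of $c^*$ using the key identity $p^*\cap W=p$; the inaccuracy claim is handled identically in both. The difference is in how the extending finitely additive probability on $W^*$ is produced. The paper works inside $\mathcal{A}(\mathcal{F}^*)$: it represents each element as a finite Boolean combination $\bigcup_i\bigcap_j\Psi(q_{ij})$, defines $\bar{c^*}$ on such representations by evaluating $\bar{c}$ on the corresponding combination in $W$, and must then check well-definedness (since representations are not unique), additivity, and normalization separately. Your route is cleaner: first extend $c$ all the way to a finitely additive probability $\mu$ on $\mathcal{P}(W)$ via Horn--Tarski (the paper's Theorem \ref{extendingpartial}), then set $\mu^*(A)=\mu(A\cap W)$ on $\mathcal{P}(W^*)$. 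This sidesteps any well-definedness check and makes additivity and normalization immediate; the extension property $\mu^*(p^*)=c^*(p^*)$ drops out of $p^*\cap W=p$ exactly as in the paper. The trade-off is that you invoke a slightly stronger extension result (to the full power set rather than to $\mathcal{A}(\mathcal{F})$), but since the paper already has Theorem \ref{extendingpartial} at hand, this costs nothing. Your remark on injectivity of $\Psi$ is also apt and not made explicit in the paper.
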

		\begin{proof}
		Since $(W^*,\mathcal{F}^*)$ is compact, we only need to show that $c^*$ is coherent by Theorem \ref{borkerresult}. Thus it suffices to show that $c^*$ can be extended to a finitely additive probability function on $\mathcal{A}(\mathcal{F}^*)$. Since $c$ is coherent, there is a finitely additive probability function $\bar{c}$ such that:
		\begin{enumerate}
			\item $\bar{c}(p)=c(p)$ for $p\in \mathcal{F}$;
			\item $\bar{c}(p\cup q)=\bar{c}(p)+\bar{c}(q)$ for $p,q\in \mathcal{F}$ with $p\cap q=\varnothing$;
			\item $\bar{c}(W)=1$.
		\end{enumerate}
		First, define $\Psi(p^c):=\Psi(p)^c$ for each $p\in \mathcal{F}$. Then each element in $\mathcal{A}(\mathcal{F}^*)$ can be represented by $\bigcup_{i=1}^{N}\bigcap_{j=1}^M\Psi(q_{ij})$ where $q_{ij}$ or its complement is in $\mathcal{F}$. We define
		\[\bar{c^*}(\bigcup_{i=1}^{N}\bigcap_{j=1}^M\Psi(q_{ij})):=\bar{c}(\bigcup_{i=1}^{N}\bigcap_{j=1}^Mq_{ij}).\]
		Using that $p=\Psi(p)\cap W$ and $p^c=\Psi(p)^c\cap W$, we show that $\bar{c^*}$ is a well-defined finitely additive probability function on $\mathcal{A}(\mathcal{F}^*)$ extending $c^*$. We first show $\bar{c^*}$ is well-defined. Assume that 
		\[\bigcup_{i=1}^{N}\bigcap_{j=1}^M\Psi(q_{ij})=\bigcup_{i=1}^{N'}\bigcap_{j=1}^{M'}\Psi(r_{ij}).\]
		Then this clearly implies that 
		\[\bigcup_{i=1}^{N}\bigcap_{j=1}^M\Psi(q_{ij})\cap W=\bigcup_{i=1}^{N'}\bigcap_{j=1}^{M'}\Psi(r_{ij})\cap W\]
		which, noting that $p=\Psi(p)\cap W$ and $p^c=\Psi(p)^c\cap W$, establishes that 
		\[\bigcup_{i=1}^{N}\bigcap_{j=1}^Mq_{ij}=\bigcup_{i=1}^{N'}\bigcap_{j=1}^{M'}r_{ij},\]
		and so 
		\[\bar{c^*}(\bigcup_{i=1}^{N}\bigcap_{j=1}^M\Psi(q_{ij}))=\bar{c}(\bigcup_{i=1}^{N}\bigcap_{j=1}^Mq_{ij})=\bar{c}(\bigcup_{i=1}^{N'}\bigcap_{j=1}^{M'}r_{ij})=\bar{c^*}(\bigcup_{i=1}^{N'}\bigcap_{j=1}^{M'}\Psi(r_{ij})).\]
		Thus $\bar{c^*}$ is well-defined. Clearly, $\bar{c^*}$ extends $c^*$. Now, since $W\subseteq W^*$, if  
		\[\bigcup_{i=1}^{N}\bigcap_{j=1}^M\Psi(q_{ij})\cap \bigcup_{i=1}^{N'}\bigcap_{j=1}^{M'}\Psi(r_{ij})=\varnothing\]
		then 
		\[\bigcup_{i=1}^{N}\bigcap_{j=1}^M\Psi(q_{ij})\cap W\cap \bigcup_{i=1}^{N'}\bigcap_{j=1}^{M'}\Psi(r_{ij})\cap W=\varnothing\]
		and so
		\[\bar{c}(\bigcup_{i=1}^{N}\bigcap_{j=1}^Mq_{ij}\cup \bigcup_{i=1}^{N'}\bigcap_{j=1}^{M'}r_{ij})= \bar{c}(\bigcup_{i=1}^{N}\bigcap_{j=1}^Mq_{ij})+ \bar{c}(\bigcup_{i=1}^{N'}\bigcap_{j=1}^{M'}r_{ij}).\]
		Then noting the definition of $\bar{c^*}$ in terms of $\bar{c}$, we establish finite additivity. Lastly, if \[W^*=\bigcup_{i=1}^{N}\bigcap_{j=1}^M\Psi(q_{ij}),\]
		then 
		\[W=\bigcup_{i=1}^{N}\bigcap_{j=1}^M\Psi(q_{ij})\cap W,\]
		and so 
		\[\bar{c^*}(\bigcup_{i=1}^{N}\bigcap_{j=1}^M\Psi(q_{ij}))=\bar{c}(\bigcup_{i=1}^{N}\bigcap_{j=1}^Mq_{ij})=\bar{c}(W)=1.\]
		This establishes that $c^*$ is coherent on $(W^*,\mathcal{F}^*)$, and so since $(W^*,\mathcal{F}^*)$ is compact, $c^*$ is countably coherent. Further, $w\in p$ if and only if $w\in \Psi(p)$ for each $w\in W$, so $v_w$ defined on $\mathcal{F}$ is the same as $v_w$ defined on $\mathcal{F}^*$ for each $w\in W$. Since $c(p)=c^*(\Psi(p))$ for all $p \in \mathcal{F}$, this establishes that $\mathscr{I}(c,w)=\mathscr{I}(c^*,w)$ for each $w\in W$.
	\end{proof}

\noindent For a coherent credence function $c$ defined on an opinion space $(W,\mathcal{F})$, we let $c^*$ denote the countably coherent credence function on $(W^*,\mathcal{F}^*)$ given as in Lemma \ref{connectinglemma}. 
    
    \begin{example}\label{examplecomputation}
	As an example, let us compute the compactification of the opinion space from Example \ref{problemswithinfinites2} and show how to identify a coherent credence function on the space with a countably coherent credence function on its compactification. We note that only for $f(n)=1$ for all $n\in \mathbb{N}$ is $\bigcap_{n=1}^Np_n^{f(n)}$ nonempty for every $N$ while $\bigcap_{n=1}^\infty p_n^{f(n)}=\varnothing$. Indeed, assume $f(m)=0$ for some $m$. If $f(i)=1$ for some $i\geq m+1$, then since $p_i^c\cap p_m=\varnothing$, we have $\bigcap_{n=1}^ip_n^{f(n)}=\varnothing$ which contradicts our assumption. So $f(i)=0$ for all $i\geq m+1$. But then since $\bigcap_{n=1}^mp_n^{f(n)}\neq \varnothing$ and  $\bigcap_{n=1}^mp_n^{f(n)}\subseteq p_i$ for all $i\geq m+1$, it also follows that $\bigcap_{n=1}^\infty p_n^{f(n)}\neq \varnothing$, which contradicts our assumption. So $S$ is a single point $x$, $W^*=W\cup\{w^*\}$, and $\mathcal{F}^*=\{\{n\leq N\}:N\in \mathbb{N}\}$. $\mathcal{F}^*$ is identical to $\mathcal{F}$, except that there is a point in the complement of every proposition in $\mathcal{F}^*$. For a coherent credence function $c$ on $(W,\mathcal{F})$, $c^*$ on $(W^*,\mathcal{F}^*)$ is identical to $c$ and is a countably coherent credence function on the compact opinion space $(W^*,\mathcal{F}^*)$. For example, for credence function $c$ in Example \ref{problemswithinfinites2}, $c^*$ extends to the countably additive omniscient credence function $v_{w^*}$ on the $\sigma$-algebra generated by $\mathcal{F}^*$.
		    \end{example}
		Using Theorem \ref{borkerresult}, Lemma \ref{iscompact}, and Lemma \ref{connectinglemma}, the proof strategy for extending Theorem \ref{step3result} is more precisely as follows. First, we establish dominance results for countably coherent credence functions. Second, we transform each coherent credence function on $(W,\mathcal{F})$ into a countably coherent credence function on $(W^*,\mathcal{F}^*)$ as in Lemma \ref{connectinglemma}. Lastly, we use the dominance results for countably coherent credence functions to establish dominance results for coherent credence functions in certain cases where there is ``accuracy dominance stability'' in compactifying.

	\subsection{W-Stable Opinion Spaces}\label{wstabilitysection}
	In this section, we establish the equivalence between coherence and avoiding weak dominance for certain countably infinite opinion spaces (Theorem \ref{weakdominancemainresult}), as well as additional results extending Theorem \ref{step3result} to the countably infinite setting (Corollary \ref{corollarytoweak} and Theorem \ref{brierequivalence}). We first note that under certain circumstances countably coherent credence functions are not weakly dominated (Proposition \ref{CCresult} and Proposition \ref{pointfiniteresult}); then we use the compactification construction from the previous section and a property of an opinion space---\textit{W-stability} (Definition \ref{wstability})---to establish that for certain opinion spaces, mere coherence is also sufficient to avoid weak dominance.
	
    We first prove that if a countably coherent credence function $c$ has finite expected inaccuracy, then $c$ is not weakly dominated.
	\begin{definition}\label{finiteexpectedinaccuracy}
		For $c$ a countably coherent credence function and $\mathscr{I}$ a generalized legitimate inaccuracy measure, we say that $c$ has \textit{finite expected inaccuracy relative to} $\mathscr{I}$ if $c$ has a countably additive extension $\bar{c}$ defined on the opinion space $(W,\sigma(\mathcal{F}))$ such that $\mathbb{E}_{\bar{c}}\mathscr{I}(c,\cdot)<\infty$.\footnote{Consider the measure space $(W,\sigma(\mathcal{F}),\mu)$. Note $\mathfrak{d}(v_{w}(p_i),d_i)=1_{p_i}(w)\mathfrak{d}(1,d_i)+(1-1_{p_i}(w))\mathfrak{d}(0,d_i)$ so that each term in $\mathscr{I}(d,\cdot)$ is measurable for any credence function $d$, and so the infinite sum is measurable as the finite sum and limit of measurable functions are measurable. Thus we can take the expectation of $\mathscr{I}(d,\cdot)$ with respect to $\mu$ for any credence function $d$.} For $c$ a coherent but not countably coherent credence function and $\mathscr{I}$ a generalized legitimate inaccuracy measure, we say that $c$ has \textit{finite expected inaccuracy relative to} $\mathscr{I}$ if $c^*$ has finite expected inaccuracy relative to $\mathscr{I}$.
	\end{definition}
	\begin{restatable}{proposition}{CCresult}\label{CCresult}
		Let $(W,\mathcal{F})$ be an opinion space and $\mathscr{I}$ a generalized legitimate inaccuracy measure.	If $c$ is a countably coherent credence function on $(W, \mathcal{F})$ with finite expected inaccuracy relative to $\mathscr{I}$, then $c$ is not weakly dominated relative to $\mathscr{I}$.
	\end{restatable}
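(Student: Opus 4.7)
The plan is to derive a contradiction from the existence of a weak dominator by comparing expected inaccuracies under the countably additive extension $\bar{c}$ guaranteed by the finite-expected-inaccuracy hypothesis. Suppose for contradiction that some credence function $d$ weakly dominates $c$, so $\mathscr{I}(d,w)\leq \mathscr{I}(c,w)$ for all $w\in W$ with strict inequality at some $w_0\in W$. Since $\mathbb{E}_{\bar{c}}\mathscr{I}(c,\cdot)<\infty$ and $\mathscr{I}(d,\cdot)\leq \mathscr{I}(c,\cdot)$ pointwise on $W$, monotonicity of the integral yields
\[
\mathbb{E}_{\bar{c}}\mathscr{I}(d,\cdot)\;\leq\; \mathbb{E}_{\bar{c}}\mathscr{I}(c,\cdot)\;<\;\infty.
\]

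Next, I would unpack each side term by term. For any credence function $e$, writing $\mathscr{I}(e,w)=\sum_{i=1}^\infty a_i\mathfrak{d}(v_w(p_i),e_i)$ and applying Tonelli's theorem (the summands are non-negative), and using the identity from the footnote to Definition \ref{finiteexpectedinaccuracy} together with $\bar{c}(p_i)=c_i$, one obtains
\[
\mathbb{E}_{\bar{c}}\mathscr{I}(e,\cdot)\;=\;\sum_{i=1}^\infty a_i\bigl[c_i\mathfrak{d}(1,e_i)+(1-c_i)\mathfrak{d}(0,e_i)\bigr].
\]
The crucial input is strict propriety of the one-dimensional Bregman score: for each fixed $c_i\in[0,1]$, the function $y\mapsto c_i\mathfrak{d}(1,y)+(1-c_i)\mathfrak{d}(0,y)$ is uniquely minimized at $y=c_i$. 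This is verified by direct differentiation using $\mathfrak{d}(x,y)=\varphi(x)-\varphi(y)-\varphi'(y)(x-y)$: the derivative in $y$ equals $-\varphi''(y)(c_i-y)$, which by the strict convexity of $\varphi$ vanishes only at $y=c_i$.

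Applying strict propriety termwise and using $a_i>0$ for every $i$, I conclude $\mathbb{E}_{\bar{c}}\mathscr{I}(d,\cdot)\geq \mathbb{E}_{\bar{c}}\mathscr{I}(c,\cdot)$, with equality if and only if $d_i=c_i$ for every $i$, i.e.\ $d=c$ on $\mathcal{F}$. Combined with the reverse inequality from weak dominance, equality is forced, hence $d=c$. But then $\mathscr{I}(d,w_0)=\mathscr{I}(c,w_0)$, contradicting strict inequality at $w_0$.

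The main obstacle is less conceptual than bookkeeping: one must ensure that the interchange of expectation with the infinite sum is legitimate (handled by Tonelli because every summand is non-negative) and that the monotonicity inequality can be combined with the strict-propriety inequality without running into indeterminate $\infty-\infty$ differences. The hypothesis $\mathbb{E}_{\bar{c}}\mathscr{I}(c,\cdot)<\infty$ is precisely what makes both steps routine; without it the argument breaks, which is consistent with the counterexamples in Examples \ref{problemswithinfinities} and \ref{problemswithinfinites2}.
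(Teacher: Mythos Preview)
Your proposal is correct and follows essentially the same approach as the paper: compare expected inaccuracies under $\bar{c}$, swap expectation and sum via Tonelli, invoke strict propriety of the one-dimensional Bregman score termwise (the paper cites this via Remark~\ref{strictlyproperremark} rather than re-deriving it), and obtain a contradiction from the opposing inequalities. One minor technicality: your differentiation argument for strict propriety invokes $\varphi''$, which the hypotheses of Definition~\ref{divergence}.\ref{divergence3} do not guarantee; strict propriety follows instead directly from strict convexity via $\varphi(c_i)>\varphi(y)+\varphi'(y)(c_i-y)$ for $y\neq c_i$, so the conclusion stands.
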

		\begin{proof}
		Since $c$ is countably coherent, let $\bar{c}$ be a countably additive probability function on $\sigma(\mathcal{F})$ extending $c$ such that $\mathbb{E}_{\bar{c}}\mathscr{I}(c,\cdot)<\infty$.  Note that $\mathfrak{d}$ is a \textit{strictly proper inaccuracy measure} for singleton opinion sets (see Remark \ref{strictlyproperremark} and \citealt[Theorem~4.3.5]{pettigrew} for a precise definition), which implies by definition that $p\mathfrak{d}(1,x)+(1-p)\mathfrak{d}(0,x)$ is uniquely minimized at $x=p$.\footnote{In more detail, a special case of Theorem 4.3.5 in \citealt{pettigrew} establishes that since $\mathfrak{d}$ is a one-dimensional Bregman divergence, there is a function $s:\{0,1\}\times [0,1]\to [0,\infty]$ that satisfies i) $ps(1,x)+(1-p)s(0,x)$ is uniquely minimized at $x=p$ for all $p\in [0,1]$, ii) $s$ is continuous, and iii) $s(v_w,x)=d(v_w,x)$ for all $w\in W$ and $x\in [0,1]$.} It follows that \[\mathbb{E}_{\bar{c}}\mathfrak{d}(v_w,c_i)=c_i\mathfrak{d}(1,c_i)+(1-c_i)\mathfrak{d}(0,c_i)<c_i\mathfrak{d}(1,x)+(1-c_i)\mathfrak{d}(0,x)=\mathbb{E}_{\bar{c}}\mathfrak{d}(v_w,x)\]
		for any $x\neq c_i$.
		
		Assume toward a contradiction that there is a credence function $d$ with $d\neq c$ and $\mathscr{I}(d,w)\leq \mathscr{I}(c,w)$ for each $w$ with strict inequality for some $w$. Then $\mathbb{E}_{\bar{c}}\mathscr{I}(d,\cdot)\leq \mathbb{E}_{\bar{c}}\mathscr{I}(c,\cdot)<\infty$, so both $\mathscr{I}(d,\cdot)$ and $\mathscr{I}(c,\cdot)$ are integrable with respect to the measure space $(W,\sigma(\mathcal{F}),\bar{c})$. Then let $i$ be any index such that $d_i\neq c_i$. There must be at least one since $c\neq d$. Then  
		\[\mathbb{E}_{\bar{c}}\mathfrak{d}(v_w,c_i)< \mathbb{E}_{\bar{c}}\mathfrak{d}(v_w,d_i).\]
		If $i$ is such that $d_i=c_i$ then clearly $\mathbb{E}_{\bar{c}}\mathfrak{d}(v_w,c_i)= \mathbb{E}_{\bar{c}}\mathfrak{d}(v_w,d_i)$. So since $\mathbb{E}_{\bar{c}}\mathscr{I}(c,\cdot)<\infty$ and $\mathbb{E}_{\bar{c}}\mathscr{I}(d,\cdot)<\infty$, we have
		\[\mathbb{E}_{\bar{c}}\mathscr{I}(c,\cdot)=\sum_{i=1}^\infty a_i\mathbb{E}_{\bar{c}}\mathfrak{d}(v_w,c_i)<\sum_{i=1}^\infty a_i\mathbb{E}_{\bar{c}}\mathfrak{d}(v_w,d_i)=\mathbb{E}_{\bar{c}}\mathscr{I}(d,\cdot),\]
		which implies that $\mathbb{E}_{\bar{c}}(\mathscr{I}(c,\cdot)-\mathscr{I}(d,\cdot))<0$. Thus there is some nonempty set $E\in \sigma(\mathcal{F})$ with $\bar{c}(E)>0$ on which $\mathscr{I}(c,\cdot)-\mathscr{I}(d,\cdot)<0$ (since the Lebesgue integral is positive). But this contradicts our assumption that $d$ weakly dominates $c$, and so we are done.
	\end{proof}

	Here is another dominance result for countably coherent credence functions where we assume $\mathcal{F}$ is \textit{point-finite} $(|\{p\in \mathcal{F}:w\in p\}|<\infty$ for all $w\in W$) but weaken the assumption that $c$ has finite expected inaccuracy considerably, namely to \textit{somewhere finitely inaccurate} (there is a $w\in W$ such that $\mathscr{I}(c,w)<\infty$). We also restrict to the generalized Brier score $\mathscr{B}$.
	\begin{restatable}{proposition}{pointfiniteresult}\label{pointfiniteresult}
	Let  $(W,\mathcal{F})$ be a point-finite opinion space. If a credence function $c$ is countably coherent and somewhere finitely inaccurate relative to $\mathscr{B}$, then $c$ is not weakly dominated relative to $\mathscr{B}$.
	\end{restatable}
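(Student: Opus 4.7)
The plan is to mimic the expectation-against-truth argument underlying Proposition~\ref{CCresult}, but with the countably additive extension $\bar{c}$ replaced by the discrete de~Finetti-style mixture supplied by Proposition~\ref{countablerep}. The first step is to verify that point-finiteness plus countability of $\mathcal{F}$ forces $(W,\mathcal{F})$ to be countably discriminating: the map $w\mapsto \{p\in\mathcal{F}:w\in p\}$ lands in the countable collection of finite subsets of $\mathcal{F}$. This licenses Proposition~\ref{countablerep}, giving weights $\lambda_v\geq 0$ summing to $1$ over $v\in\mathcal{V}_{\mathcal{F}}$ with $c(p_i)=\sum_v \lambda_v v(p_i)$ for every $i$.

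Next I exploit point-finiteness at the level of individual omniscient credence functions. For any $v\in\mathcal{V}_{\mathcal{F}}$, only finitely many coordinates $v(p_i)$ equal $1$, so expanding squares gives
\[\mathscr{B}(c,v)=\sum_{i:\,v(p_i)=1} a_i(1-2c_i)\;+\;\sum_{i=1}^{\infty}a_i c_i^{2}.\]
Hence $c$ being somewhere finitely inaccurate is equivalent to $\sum_i a_i c_i^{2}<\infty$, and this single condition uniformizes: $\mathscr{B}(c,v)<\infty$ for \textit{every} $v\in\mathcal{V}_{\mathcal{F}}$, not merely at the witness world. If $d$ is any putative weak dominator, then $\mathscr{B}(d,v)\leq \mathscr{B}(c,v)<\infty$ for all $v\in\mathcal{V}_{\mathcal{F}}$, whence by the same expansion $\sum_i a_i d_i^{2}<\infty$ as well.

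Setting $\delta_i=d_i-c_i$, the elementary identity $(u-c_i)^2-(u-d_i)^2=2\delta_i(u-c_i)-\delta_i^{2}$ gives
\[F(v):=\mathscr{B}(c,v)-\mathscr{B}(d,v)=2\sum_{i}a_i\delta_i(v(p_i)-c_i)-\sum_{i}a_i\delta_i^{2}\geq 0,\]
with absolute convergence of both series coming from $\sup_i a_i<\infty$, point-finiteness (which makes $\sum_{v(p_i)=1}a_i|\delta_i|$ a finite sum), and Cauchy--Schwarz applied to $\sum_i a_i|\delta_i|c_i$. Averaging against $\lambda_v$ and swapping summation orders (Tonelli, via the dominated non-negative version $|\delta_i(v(p_i)-c_i)|$) collapses the cross term using $\sum_v\lambda_v v(p_i)=c_i$, leaving $\sum_v \lambda_v F(v)=-\sum_i a_i\delta_i^{2}$. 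Since the left side is non-negative and the right side is non-positive, both vanish; thus $\delta_i\equiv 0$ and $d=c$, contradicting $d$ strictly improving on $c$ at some world.

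The main obstacle is justifying the Fubini swap that makes the cross term telescope: without point-finiteness one could not bound the $v$-dependent finite sum $\sum_{v(p_i)=1}a_i|\delta_i|$ uniformly, and without somewhere-finite-inaccuracy the inner product $\sum a_i|\delta_i|c_i$ would fail to converge. These two hypotheses combine precisely to upgrade pointwise finiteness of $\mathscr{B}(c,\cdot)$ at one world to uniform finiteness across all of $\mathcal{V}_{\mathcal{F}}$, which is what licenses the strictly-proper expectation argument over the countable mixture $\{\lambda_v\}$.
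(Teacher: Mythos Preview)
Your argument is correct and follows the same strategy as the paper: use point-finiteness to get the countable mixture representation of $c$ via Proposition~\ref{countablerep}, reduce ``somewhere finitely inaccurate'' to $\sum_i a_i c_i^{2}<\infty$ (hence everywhere finite, and likewise for any dominator $d$), and then average $\mathscr{B}(c,\cdot)-\mathscr{B}(d,\cdot)$ against the mixture weights to force $d=c$. The only presentational difference is that the paper phrases the same computation as $\mathfrak{D}(c,c)-\mathfrak{D}(c,d)=\sum_j\lambda_j\bigl(\mathfrak{D}(v_{w_j},c)-\mathfrak{D}(v_{w_j},d)\bigr)$ and justifies the interchange via the dominated convergence theorem with an explicit dominator, whereas you reach the identical identity $\sum_v\lambda_vF(v)=-\sum_ia_i\delta_i^{2}$ via the square expansion and Tonelli/Cauchy--Schwarz; both routes are valid and the underlying idea is the same.
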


	\begin{proof}
		Assume $d$ weakly dominates $c$. Note i) $c$ is somewhere finitely inaccurate if and only if $\mathscr{B}(c,w)<\infty$ for all $w\in W$ if and only if $\sum_{i=1}^\infty c_i^2<\infty$. It follows by weak dominance that $\mathscr{B}(d,w)<\infty$ for all $w\in W$ and therefore $\sum_{i=1}^\infty d_i^2<\infty$. Let $\mathscr{B}(c,w)=\mathfrak{D}(v_w,c)$ for $\mathfrak{D}$ a generalized quasi-additive Bregman divergence.
		
		 Since $(W,\mathcal{F})$ is point-finite, it is also countably discriminating as there are only countably many finite subsets of $\mathcal{F}$. So by Proposition \ref{countablerep}, $c=\sum_{j=1}^\infty \lambda_j v_{w_j}$ for $\lambda_j\in [0,1]$ with $\sum_{j=1}^\infty \lambda_j=1$ and $\mathcal{V}_{\mathcal{F}}=\{v_{w_j}\}_{j=1}^\infty$. First, note that $\mathfrak{D}(\sum_{j=1}^\infty \lambda_jv_{w_j},c)=0$ and $(\sum_{j=1}^\infty \lambda_jv_{w_j}(p_i)-c(p_i))^2=0$ for all $i$, so
		\[\mathfrak{D}(\sum_{j=1}^\infty \lambda_jv_{w_j},c)-\mathfrak{D}(\sum_{j=1}^\infty \lambda_jv_{w_j},d)=\sum_{i=1}^\infty a_i[(\sum_{j=1}^\infty \lambda_jv_{w_j}(p_i)-c_i)^2-(\sum_{j=1}^\infty \lambda_jv_{w_j}(p_i)-d_i)^2].\]
		 
		Using that
		 \begin{align*}
		 (\sum_{j=1}^\infty \lambda_jv_{w_j}(p_i)-c_i)^2-(\sum_{j=1}^\infty \lambda_jv_{w_j}(p_i)-d_i)^2 & =c_i^2-d_i^2+2(d_i-c_i)\sum_{j=1}^\infty \lambda_jv_{w_j}(p_i)\\
		 &=\sum_{j=1}^\infty \lambda_j c_i^2-\sum_{j=1}^\infty \lambda_j d_i^2+\sum_{j=1}^\infty \lambda_j2(d_i-c_i)v_{w_j}(p_i)\\
		 &=\sum_{j=1}^\infty \lambda_j [c_i^2- d_i^2+2(d_i-c_i)v_{w_j}(p_i)]\\
		&= \sum_{j=1}^\infty \lambda_j[(v_{w_j}(p_i)-c_i)^2-(v_{w_j}(p_i)-d_i)^2]
		 \end{align*}
		 for each $i$ since $\sum_{j=1}^\infty\lambda_j=1$, we have that 
		\begin{align}
		\mathfrak{D}(\sum_{j=1}^\infty \lambda_jv_{w_j},c)-\mathfrak{D}(\sum_{j=1}^\infty \lambda_jv_{w_j},d)&=\sum_{i=1}^\infty a_i\sum_{j=1}^\infty \lambda_j [(v_{w_j}(p_i)-c_i)^2-(v_{w_j}(p_i)-d_i)^2]\label{switchequation}\\
		&=\sum_{i=1}^\infty a_i(\sum_{j:w_j\notin p_i} \lambda_j) (c_i^2-d_i^2)
		+a_i(\sum_{j:w_j\in p_i}\lambda_j)((1-c_i)^2-(1-d_i)^2)\notag\\
		&=\sum_{i=1}^\infty a_i( c_i^2-d_i^2)+2a_i(\sum_{j:w_j\in p_i}\lambda_j)(d_i-c_i)\notag\\
		&=\sum_{i=1}^\infty a_i( -c_i^2-d_i^2)+2a_i(\sum_{j:w_j\in p_i}\lambda_j)d_i\notag
		\end{align}
		since $c_i=\sum_{j:w_j\in p_i}\lambda_j$. We have $\sum_{i=1}^\infty c_i^2+d_i^2<\infty$ by i). Thus 
		\begin{equation}\label{finitelinearterm}
		0\leq \sum_{i=1}^\infty a_i2(\sum_{j:w_j\in p_i}\lambda_j)d_i<\infty
		\end{equation}
		because 	
		\[0\geq \mathfrak{D}(\sum_{j=1}^\infty \lambda_jv_{w_j},c)-\mathfrak{D}(\sum_{j=1}^\infty \lambda_jv_{w_j},d).\] 
		
		Having established (\ref{finitelinearterm}), we claim we can use the dominated convergence theorem (see, e.g., Theorem 1.4.49 in \citealt{tao}) to switch limits in (\ref{switchequation}). Indeed,	
		\[	\sum_{i=1}^\infty a_i \sum_{j=1}^N \lambda_j[(v_{w_j}(p_i)-c_i)^2-(v_{w_j}(p_i)-d_i)^2]
		=\sum_{i=1}^\infty a_i(\sum_{1\leq j\leq N}\lambda_j)(c_i^2-d_i^2)+2a_i(\sum_{\substack{j:w_j\in p_i\\1\leq j\leq N}}\lambda_j)(d_i-c_i).\]
		Letting 
		\[g_N(i)=a_i(\sum_{1\leq j\leq N}\lambda_j)(c_i^2-d_i^2)+2a_i(\sum_{\substack{j:w_j\in p_i\\1\leq j\leq N}}\lambda_j)(d_i-c_i)\]
		and noting that $-(\sum_{\substack{j:w_j\in p_i\\1\leq j\leq N}}\lambda_j)c_i\geq- c_i^2$ since $c_i=\sum_{j:w_j\in p_i}\lambda_j$, we see that 
		\[|g_N(i)|\leq a_i( 2c_i^2+d_i^2+2(\sum_{j:w_j\in p_i}\lambda_j)d_i).\] 
		Each of $c_i^2,d_i^2,$ and $(\sum_{j:w_j\in p_i}\lambda_j)d_i$ is summable in $i$ and $\sup_i a_i<\infty$. So, the dominated convergence theorem applies, and we can switch limits.
		 
		Thus we have 
		\begin{align*}
		0&\geq \mathfrak{D}(\sum_{j=1}^\infty \lambda_jv_{w_j},c)-\mathfrak{D}(\sum_{j=1}^\infty \lambda_jv_{w_j},d)\\
		&= \sum_{j=1}^\infty \lambda_j\sum_{i=1}^\infty a_i[(v_{w_j}(p_i)-c(p_i))^2-(v_{w_j}(p_i)-d(p_i))^2] \\
		&=\sum_{j=1}^\infty \lambda_j (\mathfrak{D}(v_{w_j},c)-\mathfrak{D}(v_{w_j},d))\geq 0
		\end{align*}
	where we used that $\mathfrak{D}(v_{w_j},c)=\mathscr{B}(c,w_j)<\infty$ and $\mathfrak{D}(v_{w_j},d)=\mathscr{B}(d,w_j)<\infty$ for each $j$ by i) to break up the summation in the second line. Thus we conclude that $c=d$, as $\mathfrak{D}(c,d)=0$ if and only if $c=d$.
	\end{proof}
	
	We now introduce the notion of \textit{W-stability} which will allow us to use Propositions \ref{CCresult} and \ref{pointfiniteresult} to prove extensions of Theorem \ref{step3result}. Intuitively, an opinion space is W-stable if a coherent credence function $c$ and its countably coherent counterpart $c^*$ are weakly dominated in precisely the same cases. Thus, whether a 
	credence function on a W-stable opinion space is weakly dominated does not depend on whether the underlying opinion space on which it is defined is compactified.
	\begin{definition}\label{wstability}
		Let $(W,\mathcal{F})$ be \textit{W-stable relative to }$\mathscr{I}$ if for any coherent credence function $c$ on $(W,\mathcal{F})$, if $c$ is weakly dominated relative to $\mathscr{I}$, then $c^*$ on $(W^*,\mathcal{F}^*)$ is  weakly dominated relative to $\mathscr{I}$.\footnote{It is easy to see that for any opinion space and inaccuracy measure, if $c^*$ is weakly dominated, then $c$ is weakly dominated.}
	\end{definition}
		\begin{remark}
	Not all opinion spaces are W-stable relative to every generalized legitimate inaccuracy measure. Indeed, consider the opinion space from Example \ref{problemswithinfinites2}. Then the credence function $c$ in that example which assigns $0$ to each proposition is strongly dominated. However, where $w^*$ is the single world added to the complement of each proposition when compactifying (see Example \ref{examplecomputation}) and $v_{w^*}:\mathcal{F}^*\to [0,1]$ is the omniscient credence function at world $w^*$, we have that $c^*=v_{w^*}$ and so $\mathscr{B}(c^*,w^*)=0$. Further, for any credence function $d\neq c^*$, we have $\mathscr{B}(d,w^*)>0$ since $\mathscr{B}(d,w^*)=0$ if and only if $d=v_{w^*}=c^*$. Thus, once the underlying space is compactified, $c$ is no longer weakly dominated relative to $\mathscr{B}$. 
	\end{remark}
	Using Proposition \ref{CCresult}, we establish one of our main results: sufficient and partly necessary conditions on an opinion space for coherence to be equivalent to not being weakly dominated.
	\begin{theorem}\label{weakdominancemainresult}
		Let $\mathscr{I}$ be a generalized legitimate inaccuracy measure and $(W,\mathcal{F})$ a W-stable opinion space relative to  $\mathscr{I}$ where all coherent credence functions have finite expected inaccuracy relative to $\mathscr{I}$. Then the following are equivalent:
		\begin{enumerate}
			\item $c$ is coherent;
			\item $c$ is not weakly dominated.
		\end{enumerate}
	\end{theorem}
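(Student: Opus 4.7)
The plan is to prove the two directions separately, using the machinery already developed in the paper as black boxes. The equivalence comes down to combining Theorem \ref{inftheorem1} (for the nontrivial direction of $(2) \Rightarrow (1)$) with the compactification machinery plus Proposition \ref{CCresult} (for $(1) \Rightarrow (2)$), with W-stability serving as the bridge between coherence on $(W,\mathcal{F})$ and countable coherence on $(W^*,\mathcal{F}^*)$.

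For the direction $(2) \Rightarrow (1)$, I would argue by contrapositive. Assume $c$ is incoherent. Since $\mathcal{F}$ is countably infinite and $\mathscr{I}$ is a generalized legitimate inaccuracy measure, Theorem \ref{inftheorem1}.\ref{inftheorem11} immediately gives that $c$ is weakly dominated by a coherent credence function, so in particular $c$ is weakly dominated. This direction makes no use of W-stability or the finite expected inaccuracy hypothesis; it uses only the countable infinitude of $\mathcal{F}$, which is built into the whole framework of Section \ref{countablecase2}.

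For the direction $(1) \Rightarrow (2)$, I would argue by contradiction. Suppose $c$ is coherent but weakly dominated. By W-stability of $(W,\mathcal{F})$ relative to $\mathscr{I}$, the associated credence function $c^*$ on $(W^*,\mathcal{F}^*)$ is also weakly dominated relative to $\mathscr{I}$. By Lemma \ref{connectinglemma}, $c^*$ is countably coherent. By the hypothesis of the theorem, $c$ has finite expected inaccuracy, and by the observation immediately following Definition \ref{finiteexpectedinaccuracy} this is equivalent to $c^*$ having finite expected inaccuracy. Now $c^*$ is a countably coherent credence function on the countably infinite opinion space $(W^*,\mathcal{F}^*)$ with finite expected inaccuracy, so Proposition \ref{CCresult} applies and yields that $c^*$ is not weakly dominated. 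This contradicts what we just derived, completing the proof.

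There is no genuine obstacle here: every nontrivial step has already been packaged into a lemma, proposition, or theorem. The main care required is just to verify that Proposition \ref{CCresult} can be applied to $(W^*, \mathcal{F}^*)$ rather than $(W,\mathcal{F})$, which is fine because $\mathcal{F}^*$ is in bijection with $\mathcal{F}$ under $\Psi$ and is therefore countably infinite, and to check that the finite expected inaccuracy hypothesis on $c$ transfers to $c^*$, which is explicitly the content of the remark following Definition \ref{finiteexpectedinaccuracy}. The conceptual work has all been done earlier; this theorem is essentially the assembly step.
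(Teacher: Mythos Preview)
Your proposal is correct and matches the paper's own proof essentially line for line: the paper also derives $(1)\Rightarrow(2)$ by passing to $c^*$ via Lemma~\ref{connectinglemma}, noting that $c^*$ has finite expected inaccuracy by Definition~\ref{finiteexpectedinaccuracy}, applying Proposition~\ref{CCresult}, and then invoking W-stability, while $(2)\Rightarrow(1)$ is dispatched by Theorem~\ref{inftheorem1}. The only cosmetic difference is that the paper phrases the forward direction directly rather than by contradiction.
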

	\begin{proof}
		We prove that if $c$ is coherent, then $c$ is not weakly dominated. Let $(W^*,\mathcal{F}^*)$ be the compactification of $(W,\mathcal{F})$. If $c$ is coherent on $(W,\mathcal{F})$, then $c^*$ is countably coherent by Lemma \ref{connectinglemma}. Further $c^*$ has finite expected inaccuracy by definition and the assumption that $c$ has finite expected inaccuracy. So by Proposition \ref{CCresult}, $c^*$ is not weakly dominated. But since $(W,\mathcal{F})$ is W-stable this implies that $c$ is not weakly dominated. The other direction follows from Theorem~\ref{inftheorem1}.
	\end{proof}
	\begin{remark}
		It is trivial to see that W-stability is necessary for the equivalence of coherence and not being weakly dominated. It is open how far finite expected inaccuracy can be weakened. 
	\end{remark}
	\begin{remark}
		If $\mathscr{I}$ is defined with summable weights, that is, $\{a_i\}_{i=1}^\infty$ such that $\sum_{i=1}^\infty a_i<\infty$, then there is a $C<\infty$ such that $\mathscr{I}(c,w)<C$ for all credence functions $c$ and $w\in W$. So, in particular, all coherent credence functions have finite expected inaccuracy relative to $\mathscr{I}$.
	\end{remark}
	If we add in an additional finiteness assumption, then we get the full equivalence of Theorem~\ref{step3result}.
	
	\begin{corollary}\label{corollarytoweak}
		In Theorem \ref{weakdominancemainresult}, if in addition all coherent credence functions $c$ have $\mathscr{I}(c,w)<\infty$ for all $w\in W$, then the following are equivalent:
		\begin{enumerate}
			\item $c$ is coherent;
			\item $c$ is not weakly dominated;
			\item $c$ is not strongly dominated.
		\end{enumerate}
	\end{corollary}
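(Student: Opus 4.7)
The plan is to establish the three-way equivalence by the cycle $1 \Rightarrow 2 \Rightarrow 3 \Rightarrow 1$. The implication $1 \Rightarrow 2$ is already supplied by Theorem~\ref{weakdominancemainresult}, and $2 \Rightarrow 3$ is immediate from Definition~\ref{dominance}: strong dominance entails weak dominance, so if $c$ is not weakly dominated then $c$ is a fortiori not strongly dominated. All the real work is in showing $3 \Rightarrow 1$, which I would prove by contrapositive: assuming $c$ is incoherent, I would exhibit a coherent credence function $d$ that strongly dominates $c$.

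To build such a $d$, I would split into cases according to where $\mathscr{I}(c,\cdot)$ is finite, using Theorem~\ref{inftheorem1} (in the sharpened form of Remark~\ref{finitewherefinite}) together with the extra hypothesis that every coherent credence function has $\mathscr{I}(\cdot,w)<\infty$ at every $w$. If $\mathscr{I}(c,w)<\infty$ for all $w\in W$, then Theorem~\ref{inftheorem1}.2 directly yields a coherent $d$ that strongly dominates $c$. If $\mathscr{I}(c,w)<\infty$ for some but not all $w$, then Remark~\ref{finitewherefinite} yields a coherent $d$ with $\mathscr{I}(d,w)<\mathscr{I}(c,w)$ at every world where $c$ is finitely inaccurate; at the remaining worlds we have $\mathscr{I}(d,w)<\infty=\mathscr{I}(c,w)$ because $d$ is coherent and the standing hypothesis forces $\mathscr{I}(d,\cdot)$ to be finite everywhere, so $d$ still strongly dominates $c$. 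Finally, if $\mathscr{I}(c,w)=\infty$ for every $w$, then any omniscient credence function $v_{w_0}$ is coherent (it extends to the Dirac measure at $w_0$) and, again by the standing hypothesis, has finite inaccuracy at every world, so $v_{w_0}$ strongly dominates $c$.

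The main obstacle, and the reason the additional finiteness hypothesis is needed, lies in the second and third cases: Theorem~\ref{inftheorem1} only guarantees strict improvement at worlds where $c$ is already finitely inaccurate, and one needs the finiteness of the dominating coherent credence function's inaccuracy in order to upgrade ``weakly'' to ``strongly'' at the worlds where $c$ has infinite inaccuracy. Once this upgrade is in hand, the cycle closes and the three conditions are equivalent.
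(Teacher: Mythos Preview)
Your argument is correct and matches the paper's intended reasoning. The paper states this corollary without proof, treating it as an immediate consequence of Theorem~\ref{weakdominancemainresult} and Theorem~\ref{inftheorem1}; your cycle $1\Rightarrow 2\Rightarrow 3\Rightarrow 1$, with the case split on where $\mathscr{I}(c,\cdot)$ is finite and the use of Remark~\ref{finitewherefinite} together with the new finiteness hypothesis to upgrade weak to strong dominance, is exactly the argument the paper leaves implicit (compare the explicit version of the same maneuver in the proof of Theorem~\ref{brierequivalence}).
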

	We combine W-stability and Proposition \ref{pointfiniteresult} to get another set of sufficient conditions on $(W,\mathcal{F})$ for Theorem \ref{step3result} to go through for the generalized Brier score.
	
	\begin{restatable}{theorem}{brierequivalence}\label{brierequivalence}
		Let $(W,\mathcal{F})$ be a W-stable opinion space with $(W^*,\mathcal{F}^*)$ point-finite such that all coherent credence functions on  $(W,\mathcal{F})$ are somewhere finitely inaccurate relative to $\mathscr{B}$. Then the following are equivalent:
		\begin{enumerate}
			\item $c$ is coherent;
			\item $c$ is not weakly dominated relative to $\mathscr{B}$;
			\item $c$ is not strongly dominated relative to $\mathscr{B}$.
		\end{enumerate}
	\end{restatable}

	\begin{proof}
If $c$ is coherent, then $c^*$ is countably coherent on a point-finite opinion set. Further, $c^*$ is somewhere finitely inaccurate relative to $\mathscr{B}$, as $c$ is somewhere finitely inaccurate by assumption. Thus by Proposition \ref{pointfiniteresult}, $c^*$ is not weakly dominated relative to $\mathscr{B}$. By W-stability, $c$ is not weakly dominated relative to $\mathscr{B}$. Clearly if $c$ is not weakly dominated, then $c$ is not strongly dominated. Finally, we show that if $c$ is incoherent then $c$ is strongly dominated. First, if $c$ is not somewhere finitely inaccurate, then any omniscient credence function strongly dominates $c$ since $\mathscr{I}(v_w,w')<\infty$ for every $w,w'\in W$ by point-finiteness. If $c$ is somewhere finitely inaccurate then $\mathscr{I}(c,w)<\infty$ for all $w\in W$ by point-finiteness. Thus Theorem \ref{inftheorem1} establishes that $c$ is strongly dominated relative to $\mathscr{B}$.
\end{proof}

	\begin{remark}
		We can drop the assumption that all coherent credence functions are somewhere finitely inaccurate in Theorem \ref{brierequivalence} if we strengthen W-stable to compact so that $(W,\mathcal{F})=(W^*,\mathcal{F}^*)$. Indeed, compactness alongside point-finiteness implies coherent credence functions on $(W,\mathcal{F})=(W^*,\mathcal{F}^*)$ are somewhere finitely inaccurate: if there were a coherent (and thus countably coherent) credence function infinitely inaccurate at all worlds, then it would be strongly dominated by an omniscient credence function, contradicting Proposition \ref{strongdominancectblresult} below. 
	\end{remark}
	\subsubsection{Partitions}
	As an application of Theorem \ref{weakdominancemainresult}, we establish Theorem \ref{step3result} for countably infinite partitions.\footnote{It has been noted that \citeauthor{definetti}'s (\citeyear{definetti}) original proof of Theorem \ref{step3result} assuming the Brier score extends to countably infinite opinion sets. However, the only proof we have seen is a sketch of the necessity of coherence for countably infinite partitions by \citeauthor{Joyce1998} (\citeyear[Footnote~6]{Joyce1998}).} In parts of the existing literature (e.g., in \citealt{Joyce}), credence functions are assumed to be defined on a (finite) partition of $W$ to begin with, and so such a result might be especially relevant to extending the accuracy dominance argument for probabilism to countably infinite opinion sets.
	
	\begin{restatable}{lemma}{partitioniswstable}\label{partitioniswstable}
		A countably infinite partition is W-stable relative to any generalized legitimate inaccuracy measure.
	\end{restatable}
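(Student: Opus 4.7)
The plan is to show directly that if $c$ is coherent on a partition $(W,\mathcal{F})$ and some $d$ weakly dominates $c$, then the natural pushforward $d^*$ on $(W^*,\mathcal{F}^*)$, defined by $d^*(\Psi(p)):=d(p)$, weakly dominates $c^*$. The first step is to understand the compactification of a partition. Because the elements of $\mathcal{F}$ are pairwise disjoint, any sequence $\{p_n^{f(n)}\}$ that belongs to $S$ must have $f\equiv 1$: if $f(n_1)=f(n_2)=0$ with $p_{n_1}\neq p_{n_2}$, the finite intersection would already be empty at $\max(n_1,n_2)$. Hence $S_p=\varnothing$ for every $p\in\mathcal{F}$, so $p^*=p$ and $\mathcal{F}^*$ coincides with $\mathcal{F}$; the compactification merely adjoins bonus worlds $x_s$ lying outside every $p^*$. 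In particular the omniscient credence function $v_{x_s}$ is identically zero on $\mathcal{F}^*$, so $\mathscr{I}(c^*,x_s)=\sum_i a_i\mathfrak{d}(0,c_i)=:T(c)$ for every $s\in S$.

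For $w\in W\subseteq W^*$, $\mathscr{I}(d^*,w)=\mathscr{I}(d,w)\leq\mathscr{I}(c,w)=\mathscr{I}(c^*,w)$ with strict inequality at some $w$, directly from weak dominance on $(W,\mathcal{F})$. So the whole task reduces to establishing $T(d)\leq T(c)$. Using the Bregman form together with the normalization $\varphi(0)=\varphi'(0)=0$ from Remark~\ref{reductiononphi}, a direct calculation gives $\mathscr{I}_j(c)=T(c)+a_j g(c_j)$ for $w\in p_j$, where
\[g(x):=\mathfrak{d}(1,x)-\mathfrak{d}(0,x)=\varphi(1)-\varphi'(x)\]
is strictly decreasing in $x$. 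Weak dominance then rewrites as $T(d)-T(c)\leq a_j[\varphi'(d_j)-\varphi'(c_j)]$ for every $j$, so if there exists any index $j_0$ with $d_{j_0}\leq c_{j_0}$ the right-hand side is $\leq 0$ at $j_0$ (since $\varphi'$ is increasing), and $T(d)\leq T(c)$ follows at once.

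The main obstacle is the remaining case $d_j>c_j$ for every $j$, which I would dispatch by contradiction. Coherence of $c$ on a partition forces $\sum_i c_i\leq 1$; combined with the convexity bound $\mathfrak{d}(0,c)=c\varphi'(c)-\varphi(c)\leq\|\varphi'\|_\infty c$, this gives $T(c)<\infty$, and then boundedness of $\mathfrak{d}$ and of $\{a_i\}$ makes $\mathscr{I}(c,\cdot)$ uniformly bounded on $W$; weak dominance transfers this bound to $\mathscr{I}(d,\cdot)$, and hence $T(d)<\infty$ as well. Now suppose, toward contradiction, that $\eta:=\inf_j a_j[\varphi'(d_j)-\varphi'(c_j)]>0$. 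Since $\varphi'$ is bounded this forces $\inf_j a_j\geq\eta/\|\varphi'\|_\infty>0$; since also $\sup_j a_j<\infty$, it forces $\varphi'(d_j)\geq\eta/\sup_j a_j>0$, and then the continuity and strict monotonicity of $\varphi'$ together with $\varphi'(0)=0$ yields a uniform positive lower bound $d_j\geq\delta>0$. But then $T(d)=\sum_j a_j\mathfrak{d}(0,d_j)\geq\mathfrak{d}(0,\delta)\sum_j a_j$ is an infinite sum of uniformly positive terms and diverges, contradicting $T(d)<\infty$. Hence $\eta=0$, giving $T(d)\leq T(c)$, so $d^*$ weakly dominates $c^*$ on $(W^*,\mathcal{F}^*)$ as required.
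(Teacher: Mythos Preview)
Your proof is correct and follows the same overall architecture as the paper's: identify the compactification of a partition (new worlds all lie outside every $p^*$, so $v_{x_s}\equiv 0$), observe that inaccuracies agree on $W$, reduce to showing $T(d)\le T(c)$ at the new world(s), and derive a contradiction from the weak-dominance inequalities $T(d)-T(c)\le a_j[\varphi'(d_j)-\varphi'(c_j)]$.

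Where you diverge is in the endgame. The paper first invokes Theorem~\ref{inftheorem1} to assume without loss of generality that the dominator $d$ is itself coherent, whence $\sum_j d_j\le 1$ and $d_j\to 0$; together with $c_j\to 0$ this gives $a_j[\varphi'(d_j)-\varphi'(c_j)]\to 0$ directly, so the right-hand side tends to $0$ and $T(d)\le T(c)$ follows immediately. You avoid appealing to Theorem~\ref{inftheorem1} and instead argue, after a case split, that if $\eta:=\inf_j a_j[\varphi'(d_j)-\varphi'(c_j)]>0$ then both $\inf_j a_j>0$ and $\inf_j d_j\ge\delta>0$, forcing $T(d)=\infty$ and contradicting the finiteness you established earlier. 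This is a perfectly valid alternative and has the modest advantage of being self-contained (no forward reference to the necessity theorem), at the cost of a somewhat longer argument. One small point you use implicitly: the bound $a_j\mathfrak{d}(0,d_j)\ge a_j\mathfrak{d}(0,\delta)$ requires that $y\mapsto\mathfrak{d}(0,y)=y\varphi'(y)-\varphi(y)$ is nondecreasing on $[0,1]$; this follows from convexity (for $y_1<y_2$, $\varphi(y_1)\ge\varphi(y_2)+\varphi'(y_2)(y_1-y_2)$ gives the inequality), but it would be worth stating.
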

		\begin{proof}
	    Let $\mathcal{F}=\{p_1, p_2,\ldots\}$ be a partition.	Assume a coherent credence function $c$ on $\mathcal{F}$ is weakly dominated by some credence function $d$. We can assume $d$ is coherent by Theorem \ref{inftheorem1}, and so $\sum_{m=1}^\infty d_m\leq 1$. We show that $c^*$ is weakly dominated by $d^*$, thereby establishing that a partition is W-stable.
		
        First, $\mathscr{I}(c^*,w)=\mathscr{I}(c,w)$ and $\mathscr{I}(d^*,w)=\mathscr{I}(d,w)$ for all $w\in W$ by Lemma \ref{connectinglemma}. Thus by assumption of weak dominance,
		\[\mathscr{I}(c^*,w)\geq \mathscr{I}(d^*,w)\mbox{ for all } w\in W \]
	    with a strict inequality for some $w\in W$. We therefore need to only check what happens for $w\in W^*\setminus W$. The compactification of a partition consists in adding one point $w^*$ which is in the complement of all $p^*\in \mathcal{F}^*$; so $W^*= W\cup w^*$. If $\mathscr{I}(c^*,w^*)=\infty$, then clearly $d^*$ weakly dominates $c^*$. So assume $\mathscr{I}(c^*,w^*)<\infty$. Since $\mathscr{I}(d^*,w^*)$ and $\mathscr{I}(d^*,w)$ differ by a single term for any $w\in W$ and $\mathscr{I}(d^*,w)<\infty$ for some $w\in W$ by assumption of weak dominance, we have $\mathscr{I}(d^*,w^*)<\infty$. Now, consider 
	   
    	\[\mathscr{I}(c^*,w^*)-\mathscr{I}(d^*,w^*)=\sum_{m=1}^\infty a_m( \varphi(d_m)-\varphi'(d_m)d_m)-\sum_{m=1}^\infty a_m(\varphi(c_m)-\varphi'(c_m)c_m),\]
	    which we claim is greater than or equal to $0$. Indeed, assume toward a contradiction that \[\sum_{m=1}^\infty a_m(\varphi(d_m)-\varphi'(d_m)d_m )< \sum_{m=1}^\infty a_m( \varphi(c_m)-\varphi'(c_m)c_m).\]
	    Then since $d_n\to 0$ as $\sum_{n=1}^\infty d_n\leq 1$, $c_n\to 0$ as $\sum_{n=1}^\infty c_n\leq 1$, and $\varphi'(0)=\lim_{x\to 0}\varphi'(x)=0$ (recall Remark \ref{reductiononphi}), we have that $\varphi'(d_n)-\varphi'(c_n)\to 0$; and so we can find a $K$ such that 
		\[|\varphi'(d_n)-\varphi'(c_n)|<|\sum_{m=1}^\infty a_m(\varphi(d_m)-\varphi'(d_m)d_m)-\sum_{m=1}^\infty a_m(\varphi(c_m)-\varphi'(c_m)c_m)|\]
		for $n\geq K$. Thus for any $n\geq K$ and any $w_n\in p_n$,
		\[\mathscr{I}(c,{w_n})-\mathscr{I}(d,w_n)=\sum_{m=1}^\infty a_m(\varphi(d_m)-\varphi'(d_m)d_m)-\sum_{m=1}^\infty a_m(\varphi(c_m)-\varphi'(c_m)c_m)+\varphi'(d_n)-\varphi'(c_n)<0,\] 	 
	    contradicting that $d$ weakly dominates $c$. So indeed, $d^*$ weakly dominates $c^*$.
		\end{proof}

	\begin{restatable}{theorem}{partitionsresult}\label{partitionsresult}
		Let $(W,\mathcal{F})$ be a countably infinite partition and $\mathscr{I}$ a generalized legitimate inaccuracy measure. Then the following are equivalent:
		\begin{enumerate}
			\item $c$ is coherent;
			\item $c$ is not weakly dominated;
			\item $c$ is not strongly dominated. 
		\end{enumerate}
	\end{restatable}
		\begin{proof}
		The result follows from Corollary \ref{corollarytoweak}, Lemma \ref{partitioniswstable}, and the fact that $\mathscr{I}(c^*,\cdot)$ is bounded on $W^*$ for each coherent credence function $c$. To see the latter, note that since $c$ is coherent it follows that $\sum_{i=1}^\infty c_i=\sum_{i=1}^\infty c^*_i\leq 1$. For $w\in W$ such that $w\in p_i$, recalling that $\varphi(0)=0$ (Remark \ref{reductiononphi}),
		\[\mathscr{I}(c^*,w)=a_i\mathfrak{d}(1,c^*_i)+\sum_{j\neq i}a_j\mathfrak{d}(0,c^*_j)=a_i\mathfrak{d}(1,c^*_i)+\sum_{j\neq i}a_j(c^*_j\varphi'(c^*_j)-\varphi(c^*_j))\leq C+D\sum_{j}c^*_j\leq C+D\]
		for some constants $C,D$ independent of $c^*$. Similarly, as seen in the proof of Lemma \ref{partitioniswstable}, $W^*\setminus W=\{w^*\}$ where 
		\[\mathscr{I}(c^*,w^*)=\sum_{j=1}^\infty a_j \mathfrak{d}(0,c_j^*)=\sum_{j=1}^\infty a_j(c_j^*\varphi'(c_j^*)-\varphi(c_j^*))\leq C\]
		for some constant $C$ independent of $c^*$ or $w$. It follows that i) all coherent credence functions have finite expected inaccuracy and ii) $\mathscr{I}(c,w)<\infty$ for $c\in \mathcal{C}$ and $w\in W$. Thus Lemma \ref{partitioniswstable} and Corollary \ref{corollarytoweak} establish the result.
	\end{proof}

	\subsection{S-Stable Opinion Spaces}
	In this section, we establish the equivalence between coherence and avoiding strong dominance for certain countably infinite opinion spaces (Theorem \ref{strongdominanceequivalence}). The conditions are in terms of the analogous stability condition---\textit{S-stability} (Definition \ref{sstability})---but a different finiteness assumption, and the proof strategy is the same as for Theorem \ref{weakdominancemainresult}.
	
	We begin by establishing that on compact countably infinite opinion spaces, coherent and thus countably coherent credence functions (recall Theorem \ref{borkerresult}) are not strongly dominated.
	\begin{restatable}{proposition}{strongdominancectblresult}\label{strongdominancectblresult}
		Let $(W,\mathcal{F})$ be a compact opinion space and $\mathscr{I}$ a generalized legitimate inaccuracy measure. If $c$ is coherent (and thus countably coherent), then $c$ is not strongly dominated relative to $\mathscr{I}$.
	\end{restatable}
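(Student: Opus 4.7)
The plan is to argue by contradiction. Suppose some credence function $d$ strongly dominates $c$, so $\mathscr{I}(d,w) < \mathscr{I}(c,w)$ for every $w \in W$, and in particular $\mathscr{I}(d,w) < \infty$ everywhere. By Theorem~\ref{borkerresult}, compactness promotes $c$ to a countably coherent credence function with a countably additive extension $\bar c$ on $\sigma(\mathcal{F})$. The strategy is to combine an expectation argument under $\bar c$ with a diagonal compactness argument exploiting the finite-case result (Theorem~\ref{step3result}) at each truncation.

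I would first pass through the Bregman three-point identity to decouple $\mathfrak{D}(c,d)$ from the world-dependent ``noise'' terms. Applied coordinate-wise, the identity gives
\[
\mathscr{I}_N(d,w)-\mathscr{I}_N(c,w)=\mathfrak{D}_N(c,d)+S_N(w),\qquad S_N(w):=\sum_{i=1}^N u_i(v_w(p_i)-c_i),
\]
where $u_i=a_i(\varphi'(c_i)-\varphi'(d_i))$ is uniformly bounded in $i$ by Remark~\ref{reductiononphi} together with $\sup_i a_i<\infty$. Since $\mathbb{E}_{\bar c}[v_w(p_i)]=c_i$, we have $\mathbb{E}_{\bar c}[S_N]=0$ for every $N$, while strict propriety of the one-dimensional Bregman divergence gives $\mathfrak{D}_N(c,d)>0$ once $N$ is large enough that $d$ and $c$ disagree somewhere in the first $N$ coordinates.

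If $c$ has finite expected inaccuracy under $\bar c$, Proposition~\ref{CCresult} directly gives that $c$ is not weakly dominated and hence not strongly dominated, producing the contradiction; one can also see this by combining $\mathbb{E}_{\bar c}[S_N]=0$ with the pointwise bound $S_N(w)\to\mathscr{I}(d,w)-\mathscr{I}(c,w)-\mathfrak{D}(c,d)<-\mathfrak{D}(c,d)$ via Fatou, dominating $-S_N$ by the integrable envelope $\mathscr{I}(d,\cdot)$. When the expected inaccuracy is infinite, I would instead work at the finite-truncation level: Theorem~\ref{step3result} applied to the coherent restriction $c|_{\{p_1,\ldots,p_N\}}$ produces some $w_N\in W$ with $\mathscr{I}_N(d,w_N)\geq\mathscr{I}_N(c,w_N)$, and compactness of $\{0,1\}^{\mathbb{N}}$ extracts a subsequence along which the signatures $\tau_{w_{N_k}}$ converge to some signature $\tau^*$. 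The compactness hypothesis on $(W,\mathcal{F})$ from Definition~\ref{compactdef} then realizes $\tau^*$ as $\tau_{w^*}$ for some $w^*\in W$, and the aim is to conclude $\mathscr{I}(c,w^*)\leq\mathscr{I}(d,w^*)$, contradicting strong dominance.

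The main obstacle is this last passage to the limit in the infinite-expectation case. Since the signatures of $w_{N_k}$ and $w^*$ agree only on an initial segment whose length grows with $k$, the contribution of the indices where they disagree must be controlled. The idea is to arrange the subsequence diagonally so that $w_{N_k}$ agrees with $w^*$ on at least the first $k$ propositions, and then to use the uniform bound on the single-coordinate flip costs $|u_i|\leq 2(\sup|\varphi'|)(\sup_i a_i)$ together with the convergence of the tail $\sum_{i>k}$ at $w^*$ to match $S_{N_k}(w_{N_k})$ with $S_\infty(w^*)$ in the limit. The boundedness clauses in the definition of a generalized legitimate inaccuracy measure—boundedness of $\mathfrak{d}$ and of $\{a_i\}$—are what make this control available.
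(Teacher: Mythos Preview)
Your overall plan---leverage the finite-case result (Theorem~\ref{step3result}) at each truncation and then pass to a limit world via compactness---matches the paper's strategy in spirit, but the execution of the limit step has a genuine gap. After diagonalizing, your limit world $w^*$ agrees with $w_{N_k}$ on only the first $k$ coordinates, while the inequality you carry, $\mathscr{I}_{N_k}(d,w_{N_k})\geq\mathscr{I}_{N_k}(c,w_{N_k})$, lives on the first $N_k$ coordinates. Transferring it to $w^*$ requires controlling
\[
\sum_{i=k+1}^{N_k} u_i\bigl(v_{w_{N_k}}(p_i)-c_i\bigr),
\]
a sum over indices where $w_{N_k}$ and $w^*$ may disagree. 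Your proposed remedy---``convergence of the tail $\sum_{i>k}$ at $w^*$''---addresses the wrong object: the relevant tail sits at $w_{N_k}$, not at $w^*$, over an index range of length $N_k-k$ that the diagonal extraction does not bound. The uniform per-term estimate $|u_i|\leq C$ only yields $|\text{error}|\leq C(N_k-k)$, which can be arbitrarily large. Nothing in the sketch forces $\mathscr{I}(d,w^*)\geq\mathscr{I}(c,w^*)$; indeed $\mathfrak{D}(c,d)$ may itself be infinite, so neither $S_\infty(w^*)$ nor the limit of $\mathfrak{D}_{N_k}(c,d)+S_{N_k}(w_{N_k})$ need make sense. (The finite-expected-inaccuracy branch via Proposition~\ref{CCresult} is fine, but the case split is not what carries the argument.)

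The paper organizes the same compactness idea through K\"onig's lemma rather than a single sequential extraction: it builds the binary tree whose level-$n$ nodes are the realized length-$n$ signatures $(v_w(p_1),\ldots,v_w(p_n))$ for which some $k\geq n$ and some realizing world witness $\mathscr{I}_k(c,\cdot)<\mathscr{I}_k(d,\cdot)$. This tree is downward closed by construction and infinite by Theorem~\ref{step3result}, so K\"onig yields an infinite branch, and the compactness hypothesis on $(W,\mathcal{F})$ (Definition~\ref{compactdef}) realizes that branch as a single world $w$. The point of routing through the tree rather than picking one $w_N$ per level is exactly to arrange that the inequality is witnessed cofinally \emph{at the limit world itself}, not merely at nearby worlds whose tails you then have to reconcile. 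The three-point Bregman identity and the expected-inaccuracy case split play no role in the paper's argument.
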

		\begin{proof}
		Let $\mathscr{I}_{n}(c',w):=\sum_{i=1}^na_i\mathfrak{d}(v_w(p_i),c'(p_i))$ for each $n\in \mathbb{N}$, $w\in W$, and  credence function $c'$ on $\mathcal{F}$. Consider a credence function $d\neq c$. Define
		\[T^n=\{(v_w(p_1),\ldots,v_w(p_n)): \mathscr{I}_k(c,w)<\mathscr{I}_k(d,w)\mbox{ for some } k\geq n, w\in W\}\]
		and $T=\{e\}\cup{\bigcup_{n=1}^\infty} T^n$, where $e$ is the empty sequence. For each $s,t\in T$, we set $s<t$ if and only if $s$ is an initial sequence of $t$, and we set the height of $t\in T$ to be the length of the tuple. Then $T$ is a binary tree. 
		
		We claim $T$ is infinite. Fix $n\in \mathbb{N}$. Then there is a $t\in T$ with height $n$ if and only if $T^n\neq \varnothing$ if and only if $\mathscr{I}_k(c,w)<\mathscr{I}_k(d,w)$ for some $k\geq n$ and $w\in W$. Let $k$ be the maximum of $n$ and the smallest $i$ such that $c(p_i)\neq d(p_i)$. Then since $c$ restricted to any subset of $\mathcal{F}$ is coherent, by Theorem \ref{step3result}, $\mathscr{I}_k(c,w')<\mathscr{I}_k(d,w')$ for some $w'\in W$ and so $(v_{w'}(p_1),\ldots,v_{w'}(p_n))\in T^n$.
		
		By Konig's lemma (see, e.g., \citealt{konig}, Sec.~12.3), there exists an infinite branch \[\mathcal{B}=\bigcup_{n=1}^\infty\{ (v_{w_n}(p_1),\ldots,v_{w_n}(p_n))\}\] through $T$, where \[(v_{w_n}(p_1),\ldots,v_{w_n}(p_n))<(v_{w_m}(p_1),\ldots,v_{w_m}(p_m))\]
		whenever $n<m$. For each $i$, let $p_i^*=p_i$ if $v_{w_i}(p_i)=1$ and $p_i^*=p_i^c$ if $v_{w_i}(p_i)=0$. Then $w_n\in \bigcap_{i=1}^np_i^*$ since $v_{w_i}(p_i)=1$ if and only if $v_{w_n}(p_i)=1$ for $i<n$ as $(v_{w_i}(p_1),\ldots, v_{w_i}(p_i))<(v_{w_n}(p_1),\ldots,v_{w_n}(p_n))$. Thus $\bigcap_{i=1}^np_i^*\neq \varnothing$ for each $n$ and so by compactness there is some $w\in \bigcap_{i=1}^\infty p_i^*$. Then \[(v_w(p_1),\ldots ,v_w(p_n))=(v_{w_n}(p_1),\ldots,v_{w_n}(p_n))\in T^n\]
		for each $n \in \mathbb{N}$. By the definition of $T^n$, for each $n\in \mathbb{N}$ we have
		\[\mathscr{I}_{k_n}(c,w)<\mathscr{I}_{k_n}(d,w)\]
		for some $k_n\geq n$. Sending $n$ to infinity, $\mathscr{I}(c,w)\leq \mathscr{I}(d,w)$ and thus $d$ does not strongly dominate~$c$.  
	\end{proof}

    We now introduce S-stability and the main theorem of this section.
	\begin{definition}\label{sstability}
		Let $(W,\mathcal{F})$ be \textit{S-stable relative to} $\mathscr{I}$ if for any coherent credence function $c$ on $(W,\mathcal{F})$, if $c$ is strongly dominated relative to $\mathscr{I}$, then $c^*$ on $(W^*,\mathcal{F}^*)$ is strongly dominated relative to $\mathscr{I}$.
	\end{definition}
	\begin{theorem}\label{strongdominanceequivalence}
		Let $\mathscr{I}$ be a generalized legitimate inaccuracy measure and $(W,\mathcal{F})$ an S-stable opinion space relative to  $\mathscr{I}$. Assume that $\mathscr{I}(c,w)<\infty$ for each coherent credence function $c$ and $w\in W$. Then the following are equivalent:
		\begin{enumerate}
			\item $c$ is coherent;
			\item $c$ is not strongly dominated.
		\end{enumerate}
	\end{theorem}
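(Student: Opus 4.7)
The plan is to mirror the proof architecture of Theorem \ref{weakdominancemainresult}, substituting Proposition \ref{strongdominancectblresult} for Proposition \ref{CCresult} and S-stability for W-stability. The forward direction proceeds through the compactification, and the reverse direction follows from Theorem \ref{inftheorem1} together with the finiteness hypothesis.

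For the implication (1) $\Rightarrow$ (2), suppose $c$ is coherent on $(W,\mathcal{F})$. Form the compactification $(W^*,\mathcal{F}^*)$, which is compact by Lemma \ref{iscompact}. By Lemma \ref{connectinglemma}, the associated credence function $c^*$ is countably coherent on $(W^*,\mathcal{F}^*)$. Since $(W^*,\mathcal{F}^*)$ is compact, Proposition \ref{strongdominancectblresult} yields that $c^*$ is not strongly dominated relative to $\mathscr{I}$. The contrapositive of S-stability (Definition \ref{sstability}) then gives that $c$ is not strongly dominated.

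For the implication (2) $\Rightarrow$ (1), I prove the contrapositive: assume $c$ is incoherent. I split on whether $c$ is somewhere finitely inaccurate. If $\mathscr{I}(c,w) < \infty$ for some $w \in W$, then by Remark \ref{finitewherefinite}, there is a coherent credence function $d$ such that $\mathscr{I}(d,w') < \mathscr{I}(c,w')$ at every $w'$ where $\mathscr{I}(c,w') < \infty$. At any remaining $w'$ where $\mathscr{I}(c,w') = \infty$, the finiteness hypothesis on coherent credence functions gives $\mathscr{I}(d,w') < \infty = \mathscr{I}(c,w')$, so $d$ strongly dominates $c$. If instead $\mathscr{I}(c,w) = \infty$ for every $w \in W$, then any omniscient credence function $v_w$ is coherent, hence has $\mathscr{I}(v_w,w') < \infty$ at every $w' \in W$ by the hypothesis, so $v_w$ strongly dominates $c$.

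I expect no serious obstacle in either direction: the heavy lifting has already been done in Lemma \ref{iscompact}, Lemma \ref{connectinglemma}, Proposition \ref{strongdominancectblresult}, and Theorem \ref{inftheorem1}. The only subtlety is in the reverse direction, where one must notice that the finiteness hypothesis on \emph{coherent} credence functions does the work of Theorem \ref{inftheorem1}'s stronger finiteness premise even when the incoherent $c$ itself might be infinitely inaccurate at some (or all) worlds; the case split above handles this cleanly.
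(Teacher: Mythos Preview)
Your proof is correct and follows the same architecture as the paper's: compactification plus Proposition \ref{strongdominancectblresult} plus S-stability for (1) $\Rightarrow$ (2), and Theorem \ref{inftheorem1} for (2) $\Rightarrow$ (1). Your reverse direction is in fact more explicit than the paper's one-line appeal to Theorem \ref{inftheorem1}, correctly using Remark \ref{finitewherefinite} together with the finiteness hypothesis on coherent credence functions to handle the possibility that the incoherent $c$ is itself infinitely inaccurate at some (or all) worlds.
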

	\begin{proof}
	Assume $c$ is coherent. $c^*$ defined on the compact opinion space $(W^*,\mathcal{F}^*)$ is countably coherent by Lemma \ref{connectinglemma}. So by Proposition \ref{strongdominancectblresult}, $c^*$ is not strongly dominated. But since $\mathcal{F}$ is S-stable this implies that $c$ is not strongly dominated relative to $\mathscr{I}$. The other direction follows from Theorem \ref{inftheorem1}.
	\end{proof}
		\begin{remark}
		It is trivial to see that S-stability is necessary for the equivalence of coherence and avoiding strong dominance. It is open how much the assumption that coherent credence functions satisfy $\mathscr{I}(c,w)<\infty$ for all $w$ can be weakened.
	\end{remark}
	\begin{remark}
	\cite{1schervish2009} take a different approach to dropping the assumption that the opinion set is finite: they apply weak and strong dominance notions to finite subsets of opinion sets of arbitrary cardinality. They also explore connections between the two notions of dominance considered here---weak and strong dominance---and what they call \textit{coherence$_1$}, which amounts to avoiding being susceptible to a finite \textit{Dutch book}.\footnote{Thanks to Teddy Seidenfeld for pointing me to this work of \citeauthor{1schervish2009}. An additional point worth noting about their work is that they further generalize the finite results of \cite{predd} by i) allowing a wider variety of inaccuracy measures including those which are merely proper as opposed to strictly proper and ii) by scoring conditional probabilities. A natural direction for future work is to use these relaxations in the finite case to relax assumptions made here. Similarly, \cite{STEEGER201934} considers the property of avoiding strong dominance with respect to the Brier score for every finite subset of opinion sets of arbitrary cardinality (see ``sufficient coherence'' on p.~38 of \citealt{STEEGER201934}).}
	\end{remark}
	\begin{remark}
		Theorem \ref{strongdominanceequivalence} is related to Theorem 1 of \citealt{schervishreference}. However, 1) their assumptions are in some ways weaker and in some ways stronger than those in Theorem \ref{strongdominanceequivalence}\footnote{\citeauthor{schervishreference} require that the \textit{prevision} for the inaccuracy of the credence function be finite and that inaccuracy be pointwise finite, while we only assume the latter. On the other hand, we require the opinion set to be $S$-stable while they do not.} and 2) while \cite{schervishreference} establish that coherence is sufficient for avoiding strong dominance in certain cases, unlike Theorems  \ref{weakdominancemainresult} and \ref{strongdominanceequivalence}, their results do not show that coherence is sufficient for avoiding even weak dominance in certain cases or that incoherence always entails being weakly dominated (and sometimes strongly dominated) by a coherent credence function (see Remark \ref{dominatedbycoherent}).
	\end{remark}
		\begin{figure}[t!]
\scalebox{.99}{
\begin{tabular}{@{}ccc@{}}
\textbf{Opinion Space}                                                                                        & \textbf{Inaccuracy Measure}                                                                                                         & \textbf{Conclusion}                                                                                                \\ \midrule
\multicolumn{1}{c|}{-}                                                                                        & \multicolumn{1}{c|}{-}                                                                                                              & \begin{tabular}[c]{@{}c@{}}incoherent $\implies$ weakly dominated\\ (by coherent credence function)\end{tabular}   \\ \midrule
\multicolumn{1}{c|}{-}                                                                                        & \multicolumn{1}{c|}{$\mathscr{I}(c,w)<\infty$ for all $w$}                                                                          & \begin{tabular}[c]{@{}c@{}}incoherent $\implies$ strongly dominated\\ (by coherent credence function)\end{tabular} \\ \midrule
\multicolumn{1}{c|}{W-stable}                                                                                 & \multicolumn{1}{c|}{finite expected inaccuracy}                                                                                     & coherent $\iff$ not weakly dominated                                                                               \\ \midrule
\multicolumn{1}{c|}{W-stable}                                                                                 & \multicolumn{1}{c|}{\begin{tabular}[c]{@{}c@{}}finite expected inaccuracy\\ +\\ $\mathscr{I}(c,w)<\infty$ for all $w$\end{tabular}} & \begin{tabular}[c]{@{}c@{}}coherent $\iff$ not weakly dominated \\ $\iff $ not strongly dominated\end{tabular}     \\ \midrule
\multicolumn{1}{c|}{\begin{tabular}[c]{@{}c@{}}W-stable\\ +\\ $(W^*,\mathcal{F}^*)$ point-finite\end{tabular}} & \multicolumn{1}{c|}{\begin{tabular}[c]{@{}c@{}}$\mathscr{I}=\mathscr{B}$\\ +\\ somewhere finitely inaccurate\end{tabular}}          & \begin{tabular}[c]{@{}c@{}}coherent $\iff$ not  weakly dominated\\ $\iff$ not strongly dominated\end{tabular}      \\ \midrule
\multicolumn{1}{c|}{partition}                                                                                & \multicolumn{1}{c|}{-}                                                                                                              & \begin{tabular}[c]{@{}c@{}}coherent $\iff$ not weakly dominated \\ $\iff$ not strongly dominated\end{tabular}      \\ \midrule
\multicolumn{1}{c|}{S-stable}                                                                                 & \multicolumn{1}{c|}{$\mathcal{I}(c,w)<\infty$ for all $w$}                                                                          & coherent $\iff$ not strongly dominated                                                                             \\ \bottomrule
\end{tabular}}
\caption{This table gives a summary of the results presented in Sections \ref{countablecase1} and \ref{countablecase2}. Each column summarizes a main result by specifying conditions under which a particular conclusion about a credence function $c$ holds. The column title `Opinion Space' specifies conditions on the opinion space on which $c$ is defined, the column titled `Inaccuracy Measure' specifies conditions on the generalized legitimate inaccuracy measure relative to which dominance is defined, and the column titled `Conclusion' specifies what can be deduced regarding the relationship between coherence and dominance for $c$ under the given conditions. `-' in a box means no additional condition is imposed. }
\label{summary}
 \end{figure}
 	For a summary of the results established thus far, see Figure \ref{summary}.
	\subsection{Further Directions}
	While Theorems \ref{weakdominancemainresult} and \ref{strongdominanceequivalence} come close to characterizing the countably infinite opinion spaces on which not being weakly and strongly dominated, respectively, are equivalent to coherence, it is open how far the finiteness assumptions in the theorems can be weakened. This is a natural next line of inquiry. In addition, it would be useful to determine characterizations of W- and S-stability in terms of the inaccuracy measure that make it relatively easy to check whether an opinion set is W- or S-stable. For example, might it be that an opinion space is W- and S-stable relative to a generalized legitimate inaccuracy measure if the generalized legitimate inaccuracy measure only outputs finite scores for credence functions on that opinion space? Also, there are natural ways to generalize the results above to more closely match the finite results: allow different one-dimensional Bregman divergences for different propositions and allow unbounded one-dimensional Bregman divergences. 
	
	Another direction one could go in exploring the sufficiency of coherence for avoiding dominance is as follows: instead of characterizing the countably infinite opinion sets on which Theorem \ref{step3result} goes through, one could characterize the kinds of coherent credence functions for which Theorem \ref{step3result} goes through on any countably infinite opinion set.\footnote{Thanks to Thomas Icard and Milan Moss\'e for suggesting this alternative direction of study.} Doing so might show that while coherence is not enough to avoid dominance in all cases, coherence along with additional plausible constraints is sufficient. In particular, while restricting to finitely supported credence functions is not enough to establish the sufficiency of coherence for avoiding strong dominance (due to Example \ref{problemswithinfinites2}), it is open whether \textit{countable} coherence is equivalent to avoiding weak or strong dominance on the restricted class.

	\section{The Uncountable Case}\label{uncountablecase}
	So far we have been concerned with credences defined on countable opinion sets. We now consider what can be said in favor of probabilism when credences are defined on uncountable opinion sets, though much of what is said will be preliminary. When extending from the finite to the countably infinite setting, we used inaccuracy measures that naturally restrict to legitimate inaccuracy measures in the finite case. When extending from the countable to the the uncountable setting, we will use inaccuracy measures defined as integration against a measure on the uncountable set of propositions. This is a natural generalization of (generalized) legitimate inaccuracy measures, for (generalized) legitimate inaccuracy measures are defined as integration against a particular kind of measure on a countable set of propositions. Indeed, upon inspection, one can see that in the finite and countably infinite setting, a (generalized) legitimate inaccuracy measure defined in terms of weights $\{a_i\}$ and one-dimensional Bregman divergence $\mathfrak{d}$ is given by integrating $\mathfrak{d}(v_w(\cdot),c(\cdot))$ as a function of $\mathcal{F}=\{p_1, p_2,\ldots\}$ against the measure $\mu$ on $\mathcal{F}$ defined by $\mu(A)=\sum_{p_i\in A}a_i$ for $A\in \mathcal{P}(\mathcal{F})$. We generalize this construction for uncountable $\mathcal{F}$ by defining the legitimate inaccuracy measures to be given by integration of a one-dimensional Bregman divergence $\mathfrak{d}(v_w(\cdot),c(\cdot))$ as a function of $\mathcal{F}$ against a measure $\mu$ defined on $\mathcal{F}$.

	
	Due to the measure theoretic construction of the inaccuracy measures we consider, we restrict our attention to measurable credence functions and equate credence functions that are equal almost everywhere. In some measure spaces, like the weighted counting measure spaces (with all non-zero weights) underlying (generalized) legitimate inaccuracy measures, we lose nothing since every credence function is measurable and only the empty set is measure zero. However, in other cases, these assumptions are substantive. I discuss this issue further after stating the main theorem of this section (Theorem \ref{verygeneral}).
	
    We now formally extend the accuracy framework to the measure theoretic setting.
	\begin{definition}
		Let $\mathcal{F}\subseteq \mathcal{P}(W)$ be an opinion set, $(\mathcal{F},\mathcal{A},\mu)$ a measure space,\footnote{So $\mathcal{A}$ is a $\sigma$-algebra on $\mathcal{F}$ and $\mu$ is a (countably additive) measure on $(\mathcal{F},\mathcal{A})$.} and $c:\mathcal{F}\to \mathbb{R}^+$. If $c$ is $\mathcal{A}$-measurable and $\mu(\{p:c(p)\notin [0,1]\})=0$, we call $c$ a \textit{$\mu$-credence function}. We say that a $\mu$-credence function $c$ is \textit{$\mu$-coherent} if there is a coherent (in the usual sense) credence function $c'$ on $\mathcal{F}$ with $c=c'$ $\mu$-a.e. We say a $\mu$-credence function $c$ is \textit{$\mu$-incoherent} if there is no coherent credence function $c'$ such that $c=c'$ $\mu$-a.e.
	\end{definition}

	\begin{definition}
		Let $\mathcal{F}$ be an opinion set (of arbitrary cardinality) over a set $W$ of worlds. Let $(\mathcal{F},\mathcal{A},\mu)$ be a $\sigma$-finite measure space over the opinion set $\mathcal{F}$.  Let $\mathcal{C}$ be the space of all $\mu$-credence functions. Assume $\mathscr{I}:\mathcal{C}\times W\to [0,\infty]$ is such that, for all $(c,w)\in \mathcal{C}\times W$, we have
		\[\mathscr{I}(c,w)=B_{\varphi,\mu}(v_w,c),\]
	where $B_{\varphi,\mu}$ is a \textit{Bregman distance} relative to $\varphi$\footnote{Again, we assume that the one-dimensional Bregman divergence $\mathfrak{d}$ generated by $\varphi$ is bounded.} and  $(\mathcal{F},\mathcal{A},\mu)$ (see Definition \ref{bregmandistance}). In particular, each $v_w$ is a $\mu$-credence function. Then we call $\mathscr{I}$ an \textit{integral inaccuracy measure on} $(\mathcal{F},\mathcal{A},\mu)$.
	\end{definition}
Here is a dominance result for integral inaccuracy measures. The proof is essentially a measure theoretic version of the proof of Theorem \ref{inftheorem1} and can be found in the Appendix.
	\begin{restatable}{theorem}{verygeneral} \label{verygeneral}Let $\mathscr{I}$ be an integral inaccuracy measure on a finite measure space $(\mathcal{F},\mathcal{A},\mu)$.\footnote{We assume finiteness for technical reasons.} Then for every $\mu$-credence function $c$, if $c$ is $\mu$-incoherent, then there is a $\mu$-coherent $\mu$-credence function  $c'$ that strongly dominates $c$ relative to $\mathscr{I}$.
	\end{restatable}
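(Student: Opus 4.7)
The plan is to mirror the proof of Theorem \ref{inftheorem1}, now in the integral setting, using the general theory of Bregman distances due to \cite{csiszar}. The three ingredients are: existence of a Bregman projection of $c$ onto the set of $\mu$-coherent credence functions, the generalized Pythagorean inequality for Bregman distances, and the positivity clause of the divergence property.

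First, let $\mathcal{K}$ denote the set of $\mu$-coherent $\mu$-credence functions, viewed up to $\mu$-a.e.\ equality. Since convex combinations of coherent credence functions are coherent, $\mathcal{K}$ is convex, and by hypothesis it contains every omniscient $v_w$. I would then produce a Bregman projection: a minimizer $c'\in \mathcal{K}$ of $B_{\varphi,\mu}(\,\cdot\,,c)$ over $\mathcal{K}$. Because $\mathfrak{d}$ is bounded and $\mu$ is finite, $B_{\varphi,\mu}(\,\cdot\,,c)$ takes only finite values; strict convexity of $\varphi$ together with a lower-semicontinuity argument (viewing credence functions as elements of the weak-$*$-compact unit ball of $L^\infty(\mu)$ via Banach--Alaoglu) yields existence of the minimizer, provided $\mathcal{K}$ is closed under the relevant limits. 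Closedness of $\mathcal{K}$ is handled by an integrated version of the Definetti-style representations used in the countable case (Theorem \ref{finitecoherentcharacterization} and Proposition \ref{countablerep}): a weak-$*$ limit of $\mu$-coherent credence functions is still $\mu$-a.e.\ equal to some coherent credence function.

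Second, applying the generalized Pythagorean inequality for Bregman distances to the convex set $\mathcal{K}$ and its projection $c'$, for every $f\in \mathcal{K}$ we have
\[
B_{\varphi,\mu}(f,c) \;\geq\; B_{\varphi,\mu}(f,c') + B_{\varphi,\mu}(c',c).
\]
Specializing to $f = v_w$ yields $\mathscr{I}(c,w) \geq \mathscr{I}(c',w) + B_{\varphi,\mu}(c',c)$ for every $w\in W$. Since $c$ is $\mu$-incoherent while $c'$ is $\mu$-coherent, $c$ and $c'$ cannot agree $\mu$-a.e., so the divergence property gives $B_{\varphi,\mu}(c',c) > 0$. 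Hence $\mathscr{I}(c,w) > \mathscr{I}(c',w)$ for every $w\in W$, which is exactly strong dominance of $c$ by the $\mu$-coherent credence function $c'$.

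The main obstacle is the compactness/closedness step establishing existence of the Bregman projection and, in particular, showing that $\mathcal{K}$ is closed in a topology on which $B_{\varphi,\mu}(\,\cdot\,,c)$ attains its minimum. The generalized Pythagorean inequality itself is a standard feature of Bregman distances in the framework of \citeauthor{csiszar}, and the positivity at the end follows immediately from the divergence property once the projection is located. Thus the real content of the proof lies in properly importing the coherent-representation results from the finite and countable cases into the measure theoretic setting, which is where the hypotheses that $\mu$ is finite and $\mathfrak{d}$ is bounded do the crucial work.
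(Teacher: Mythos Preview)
Your architecture matches the paper's: apply the Csisz\'ar--Bregman Pythagorean inequality with the set of $\mu$-coherent credence functions playing the role of the convex target, locate the projection inside that set, and finish with the positivity of $B_{\varphi,\mu}(c',c)$. Where you diverge is in the two technical steps you flag as the ``main obstacle,'' and one of those divergences is a genuine gap.

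First, on existence: the paper does not go through Banach--Alaoglu and weak-$*$ lower semicontinuity. It invokes Csisz\'ar's theorem (Theorem~\ref{dominancelemma}) directly, which already supplies a generalized $B$-projection $\pi_c$ satisfying the Pythagorean inequality, together with the fact that every $B$-minimizing sequence converges to $\pi_c$ \emph{loosely in $\mu$-measure}. This sidesteps any need to verify lower semicontinuity in a weak topology; one only has to show that $\mathcal{K}$ is closed under loose $\mu$-convergence. Since $\mu$ is finite, loose convergence gives a subsequence converging $\mu$-a.e., so the task reduces to: a pointwise limit (on a full-measure set) of coherent credence functions is coherent.

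Second, and this is the real issue: you propose to prove that closedness by ``an integrated version of the de Finetti-style representations'' of Theorem~\ref{finitecoherentcharacterization} and Proposition~\ref{countablerep}. That tool is not available here. The representation of coherent credences as mixtures of omniscient credences holds only on finite opinion sets, and Proposition~\ref{countablerep} extends it only to \emph{countably coherent} credences on \emph{countably discriminating} opinion sets. For an uncountable $\mathcal{F}$ and merely finitely additive coherence, no such representation is on offer, so it cannot serve as the closure mechanism. The paper instead uses the Horn--Tarski partial measure machinery (Definition~\ref{partialmeasure}, Theorem~\ref{extendingpartial}): one checks that the pointwise limit $\overline{c}$ satisfies the finitary inequalities defining a partial measure (these pass to the limit because each $c_{a_n}$, being coherent, satisfies them via Proposition~\ref{measureproperty}), and then Horn--Tarski extends $\overline{c}$ to a finitely additive probability on the generated algebra. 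That argument is cardinality-agnostic, which is exactly what is needed in the uncountable setting. Your proof becomes correct once you replace the de Finetti step by this Horn--Tarski argument; everything else you wrote goes through.
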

    
     It is worth noting that this result does \textit{not} show that every incoherent credence function is strongly dominated, since not every incoherent credence function is a $\mu$-incoherent credence function: there can be incoherent credence functions which are $\mu$-a.e. equivalent to coherent credence functions, even ones which are undominated insofar as any coherent credence function is undominated.\footnote{Thanks to an anonymous referee for raising this issue.} This is to be expected since, as discussed above, in moving to a measure theoretic framework, measure zero differences between credence functions will not be detected as far as accuracy is concerned; and thus incoherent credence functions which deviate from coherence by a measure zero set will have the same inaccuracy scores as coherent credence functions. So an accuracy dominance argument in this measure theoretic setting will at most establish that one ought to have a credence function which is coherent off a measure zero set, i.e., is $\mu$-coherent. Theorem \ref{verygeneral} is a first step toward such an argument. We leave for future work considering different ways to extend inacccuracy measures to the uncountable setting that might establish not just $\mu$-coherence but (strict) coherence.
     
     

	Here is an example of how Theorem \ref{verygeneral} can be used to give an accuracy argument in a concrete uncountable setting. Assume we have a coin with unknown bias $\theta\in [0,1]$ and a set of propositions of the form ``$a \leq \theta\leq b$'' for each $a,b\in [0,1]$ with $a\leq b$. Then a credence function on this uncountable opinion set can be represented by a function 
	\[c:X\to [0,1],\]
	where $X=\{(a,b):0\leq a\leq b\leq 1\}\subseteq [0,1]^2$. We put the Lebesgue measure on $X$ to generalize the additive constraint often assumed in the finite case. We let
	\[\mathscr{I}(c,w)=\int_X \mathfrak{d}(v_w(\mathbf{x}),c(\mathbf{x}))\lambda(d\mathbf{x})\]
	for a bounded one-dimensional Bregman divergence $\mathfrak{d}$. Then the assumptions of Theorem \ref{verygeneral} hold, so we get the following dominance result: for any $\lambda$-credence function $c$, if $c$ is $\lambda$-incoherent, then there is a $\lambda$-coherent $\lambda$-credence function that strongly dominates $c$.
	
		\section{Discussion} \label{discussion}
    
    
    I now briefly consider the difficult question of what normative conclusions to draw from the results of Sections \ref{countablecase1}-\ref{uncountablecase}. 
    \subsection{Extending the Finite Accuracy Dominance Argument for Probablism}
    To begin, it has been shown by \citealt[Theorem 1]{1schervish2009} that a credence function on an opinion set of arbitrary cardinality is coherent if and only if its restriction to any finite subset of the opinion set is not strongly dominated. One might wonder whether this result suffices for an accuracy-based argument for probabilism on an infinite opinion set.\footnote{\cite{DrTruthlove} suggests appealing to a similar notion of ``local accuracy dominance'' when dealing with infinite sets of full beliefs.} Why care in addition about dominance relations when all infinitely many of the agent's credences are scored at once?

    
    We need to separate the questions of whether there is an accuracy-based argument for probabilism in the infinite setting and whether there is an \textit{extension} to the infinite setting of the accuracy dominance arguments for probabilism already established in the finite setting. While there are differences between the various accuracy dominance arguments for probabilism in the finite setting, they share the key feature of appealing to the following mathematical fact: relative to some legitimate way of scoring inaccuracy of credence functions over a given opinion set, every incoherent credence function is accuracy dominated by a coherent credence function and no coherent credence function is accuracy dominated by any other credence function.\footnote{One notable exception is that unlike \cite{Joyce}, \cite{Joyce1998} does not appeal to the second half of this key fact.} Motivating the appeal to this mathematical fact is a commitment to the epistemic value of having an \textit{overall} accurate epistemic state, where the epistemic state in question is represented by a credence function.
    
    By suggesting that the epistemic value underlying the finite accuracy dominance arguments is that of overall accuracy, I do not mean to suggest that inaccuracy measures ought to be sensitive to what \cite{pettigrew} calls ``irreducibly global features'' of a credence function (see pp.~49-50). To suggest so would likely conflict with allowing an inaccuracy measure to be additive, which I do here. Indeed, Pettigrew  motivates additivity by suggesting that inaccuracy \textit{not} be sensitive to global features since credence functions are not unified doxastic states but simply agglomerations of individual credences.\footnote{I am grateful to an anonymous referee for raising this objection.} The thought here, however, is that in the finite accuracy dominance argument, \textit{all} credences in the agglomeration contribute to the accuracy scores of the representing credence function (perhaps differentially weighted), which are then analyzed for dominance relations. Doing so is motivated by the epistemic ideal of an accuracy undominated epistemic state, where every credal state in the epistemic state is accounted for in scoring. By similarly requiring every credal state in an \textit{infinite} agglomeration to contribute to the accuracy scores of the representing credence function that are then analyzed for dominance relations, I prove results that allow one to extend the accuracy dominance argument while  retaining this underlying motivating ideal.
    
    Now, \citeauthor{1schervish2009}'s Theorem 1 alone does not establish the cited crucial mathematical fact when credence functions are defined over an infinite opinion set, for it suggests no way to score credence functions over infinite opinion sets in the first place. Therefore, an appeal to their result would not be motivated by a commitment to the epistemic value of having an overall accurate epistemic state, a commitment that underwrites the existing accuracy dominance arguments for probabilism. Moreover, while their Theorem 1 establishes that for an incoherent credence function $c$, the restriction of $c$ to any finite subset on which it is incoherent is strictly dominated by a coherent credence function, the dominating credence function may depend on the finite subset.\footnote{In fact, this dependence may hold even in the finite case. See \citealt[Example 10]{1schervish2009}.}  Thus, their result does not provide even an alternative sense in which every incoherent credence function is dominated by a single coherent credence function, which is the kind of result appealed to in the finite accuracy dominance argument.

    Of course, even if a local dominance result like that of \citeauthor{schervishreference}'s cannot be used to extend to the infinite setting what is often referred to as the accuracy dominance argument for probabilism, it could be used to establish a different accuracy-based argument for probabilism in the infinite setting. So we should not assume that, from an accuracy perspective, probabilism in the infinite setting will stand or fall based on the results in this paper---that depends, for instance, on whether it is irrational \textit{in itself} to be in an epistemic state where restrictions of one's representing credence function are accuracy dominated. My point is simply that whatever accuracy-based arguments one can give for probabilism in the infinite setting, it is important to determine whether the influential set of arguments that are often referred to under the single heading of ``the accuracy dominance argument for probabilism'' must be restricted to finite opinion sets. In light of these considerations, to make clear that I will be concerned with comparing the ``total'' inaccuracy scores of credence functions, let us call the inaccuracy (resp.~accuracy) of an agent's entire epistemic state---that is, the inaccuracy of the agent's full credence function---the \textit{total inaccuracy} (resp.~total accuracy) of the agent's epistemic state.

\subsection{The Status of the Accuracy Dominance Argument for Probabilism on Infinite Opinion Sets}
Given the results of the last three sections, what should we conclude about the accuracy dominance argument for probabilism and more generally about the accuracy framework applied to credence functions defined on infinite opinion sets? When an agent's epistemic state contains only finitely many credences, concern for total accuracy leads to a dominance justification for probabilism (using Theorem \ref{step3result}). However, we saw above that when the agent's epistemic state includes even countably infinitely many credences, concern for total accuracy does not so clearly lead to a dominance justification for probabilism since coherence is not sufficient to avoid accuracy dominance in all cases (as in Examples \ref{problemswithinfinities} and \ref{problemswithinfinites2}). I suggest that the main normative challenges at this point are i) to clarify this asymmetry in the relationship between coherence and total accuracy dominance for finite and infinite opinion sets and ii) to determine what this asymmetry means for the accuracy framework more generally.

One response is to deny that real world agents ever have infinitely many credences at a time, and so this entire discussion is mere ideal theory. The empirical claim that real world agents are restricted to finite opinion sets is not obvious, however. For example, if there were a coin in front of me of unknown bias, can I not have credences in each of the propositions ``the bias of the coin is $x$'' for $x\in [0,1]$?\footnote{\citeauthor{pettigrew} (\citeyear[p.~222]{pettigrew}) gives this example to motivate dropping his assumption that the opinion set is finite.} Or can I not have credence $2^{-n}$ in the proposition ``the coin would land heads $n$ times in a row if I flipped it $n$ times in a row'' for each $n\in \mathbb{N}$? More generally, it seems we can have credences parameterized by some infinite set such as the natural numbers. Thus, this kind of objection to considering infinite opinion sets at all seems unlikely to work or, at the very least, requires further defense. 


Alternatively, one could argue that any asymmetry is unimportant with regard to probabilism: while the results show that some coherent credence functions are ruled out as irrational on the basis of total accuracy dominance, all incoherent credence functions are ruled out on the same basis (in light of Theorems \ref{inftheorem1} and \ref{verygeneral}). Thus, probabilism is justified similarly in both the finite and infinite case: one ought to have at least coherent credences so as to avoid total accuracy dominance. The only difference is that in the infinite case, coherence is not enough to avoid total accuracy dominance. I find this response promising. However, one complication is that while Theorems \ref{inftheorem1} and \ref{verygeneral} together guarantee that incoherent credence functions are dominated by coherent credence functions no matter the infinite opinion set, they do not guarantee dominance by an \textit{undominated} coherent credence function in general.\footnote{It is an important open question whether Theorems \ref{inftheorem1} and \ref{verygeneral} can be strengthened to conclude that the dominating coherent credence function can always be assumed to be undominated.} But, as discussed in Footnote \ref{dominancefootnote}, it has been argued that being accuracy dominated is not irrational in itself, but rather being dominated by a credence function which is itself not dominated is irrational.\footnote{See \citealt[pp.~20-21]{pettigrew} for a counterexample to the stronger principle that dominance alone is irrational.} So this line of response would require proving additional results or appealing to a controversial decision theoretic principle.

A more radical response would be to claim that upon examination, there is simply no reasonable formal measure of total inaccuracy for an agent with infinitely many credences. In particular, no generalized legitimate inaccuracy measure, as I have been calling them, is in fact a legitimate measure of total inaccuracy in the infinite setting. Thus, to justify probablism on any opinion set using accuracy considerations, there is no other option but to appeal to a local dominance result like that of  \citeauthor{1schervish2009}'s. I see three challenges with this response. First, if there is no way to evaluate epistemic states with infinitely many credences for total inaccuracy, i) is the ability to justify norms on epistemic states in terms of total inaccuracy restricted in scope and ii) if so, might this restriction in scope have negative consequences for the more general project of justifying epistemic norms by appealing to total inaccuracy? For example, if concern for total accuracy can only motivate epistemic norms in the finite case so that one must appeal to other epistemic virtues in the infinite case anyway, why care so much about total accuracy in the finite case?

Second, there are infinite opinion sets for which the natural extensions of the inaccuracy measures used in the finite case seem to behave as they do in the finite case with respect to accuracy dominance, e.g., for countably infinite partitions. Do we reject the legitimacy of generalized legitimate inaccuracy measures even in these cases where the measures behave as we would like? If so, why? And if not, then work must be done to spell out precisely for which infinite opinion sets there is no reasonable way to measure total inaccuracy. Presumably, Theorems \ref{weakdominancemainresult} and \ref{strongdominanceequivalence} would be helpful toward this end.

Third, if one is to deny the legitimacy of what I have called generalized legitimate inaccuracy measures, then one needs to explain what exactly is wrong with them, given their structural and axiomatic similarity to the inaccuracy measures often used to measure the total accuracy of credence functions on finite opinion sets.\footnote{For example, generalized legitimate inaccuracy measures satisfy all of \citeauthor{pettigrew}'s (\citeyear{pettigrew}, p.~65) axioms on a legitimate measure of inaccuracy except his decomposition axiom because it is not clear how to even define a key part of the decomposition axiom---what he calls the ``well calibrated counterpart'' of a credence function---in the infinite setting. See also Footnote \ref{axiom}. Clearly, generalized legitimate inaccuracy measures do not satisfy \citeauthor{Joyce}'s (\citeyear{Joyce}) ``coherent admissibility'' in general. However, generalized legitimate inaccuracy measures do satisfy coherent admissibility when restricted to countably infinite partitions (Theorem \ref{partitionsresult}), and \citeauthor{Joyce} restricts to credence functions on partitions in his argument in the finite case. Further analysis of the axiomatic properties of generalized legitimate inaccuracy measures is left for future work.} While one might think that a simple response is that no credence function should be infinitely inaccurate according to a reasonable inaccuracy measure, this response is not so obviously compelling. First, as \cite{pettigrew} points out, it is hard to assess the plausibility of this claim since whether inaccuracy can be infinite or not ``is not something which our concept of accuracy contains much information about'' (p.~37). Another issue with this response, also raised by \citeauthor{pettigrew}, is that the logarithmic inaccuracy measure---often seen as a reasonable way to measure inaccuracy in the finite case (see, e.g., \citealt{logarithm})---can output an infinite inaccuracy score even for finite opinion sets. So if one rejects generalized legitimate inaccuracy measures in virtue of their outputting infinite values, this will have consequences for what the legitimate inaccuracy measures are in the finite case. Lastly, insisting on only inaccuracy measures which output a finite inaccuracy score would justify focusing on  a subclass of the generalized legitimate inaccuracy measures, rather than justify rejecting all of them.\footnote{For example, if the weights defining a generalized legitimate inaccuracy measure are summable, then inaccuracy is always finite.} In fact, it is an interesting open question whether this restriction to the finite generalized legitimate inaccuracy measures picks out a set of generalized legitimate inaccuracy measures for which the finite dominance result goes through to all countably infinite opinion sets.\footnote{\citeauthor{walsh}'s result discussed above (see Theorem \ref{walshtheorem}) may be some evidence in the affirmative.}

To conclude, I do not claim to have provided a full analysis of the options available for responding to the results in this paper, but I hope to have shown that further philosophical work is in order if we are to understand their implications for probabilism and accuracy-based justifications of epistemic norms more generally.

	\section{Conclusion}
	As discussed in the previous section, there is plenty of normative work to be done using the results established above. In light of the failure of coherence being sufficient to avoid strong dominance on certain countably infinite opinion sets, the most pressing question seems to be: is there an accuracy dominance argument for probabilism on at least all countable opinion sets? If not, what does this mean for the accuracy project as a whole? Can we give some sort of privileged status to certain kinds of opinion sets or inaccuracy measures for which coherence is equivalent to not being dominated, e.g., partitions? What is the normative status of the stronger condition of countable coherence?  Further, while the measure theoretic framework introduced in Section \ref{uncountablecase} to score inaccuracy of credence functions over opinion sets of arbitrary cardinality seems like a natural extension of the finite and countably infinite frameworks, is it well motivated that inaccuracy does not track the behavior of a credence function on measure zero sets? The hope with this paper is to start a conversation about these questions by first establishing relevant mathematical results.
	\appendix
	\section{Appendix}
			We review the necessary background before proving Theorems \ref{inftheorem1} and \ref{verygeneral}. 
    \subsection{Generalized Projections}\label{generalizedprojections}
	\citeauthor {csiszar} (\citeyear{csiszar}) showed that what he calls \textit{generalized projections} onto convex sets with respect to Bregman distances exist under very general conditions. We review his relevant results here (but assume knowledge of basic measure theory).  
	\begin{definition}\label{bregmandistance}
		Fix a $\sigma$-finite measure space $(X,\mathcal{X},\mu)$. The \textit{Bregman distance} of non-negative ($\mathcal{X}$-measurable) functions $s$ and $t$ is defined by 
		\[B_{\varphi,\mu}(s,t)=\int \mathfrak{d}(s(x),t(x))\mu(dx)\in [0,\infty]\] 
		where $\mathfrak{d}(s(x),t(x))=\varphi(s(x))-\varphi(t(x))-\varphi'(t(x))(s(x)-t(x))$ for some strictly convex, differentiable function $\varphi$ on $(0,\infty)$.\footnote{For $B_{\varphi,\mu}$ to be a distance measure, we do not need to assume that $\varphi(1)=\varphi'(1)=0$ by the remark following (1.9) in \citealt{csiszar}.}  Note that $B_{\varphi,\mu}(s,t)=0$ iff $s=t$ $\mu$-a.e. See \citealt[p.~165]{csiszar} for details. 
	\end{definition}
	\begin{remark}\label{divergencetodistance}
		Notice that a generalized quasi-additive Bregman divergence $\mathfrak{D}$ with weights $\{a_i\}_{i=1}^\infty$ whose generating one-dimensional Bregman divergence $\mathfrak{d}$ is given in terms of $\varphi$ has a corresponding Bregman distance $B_{\bar{\varphi},\mu}$ with
		\begin{enumerate}
			\item the measure space being $(\mathbb{N},\mathcal{P}(\mathbb{N}),\mu)$, where $\mu(A)=\sum_{i\in A}a_i$ for each $A\in \mathcal{P}(\mathbb{N})$, and
			\item $\bar{\varphi}$ on $(0,\infty)$ being a strictly convex, differentiable extension of $\varphi$ on $[0,1]$.\footnote{Using that $\varphi'$ exists and is finite at $x=1$ as we assumed $\mathfrak{d}$ is bounded, we extend $\varphi$ as follows: for $x\in [1,\infty)$, let $\bar{\varphi}(x)=q(x)=x^2+bx+c$, where $b$ and $c$ are chosen so $\varphi(1)=q(1)$ and $\varphi'(1)=q'(1)$. Then using the fact that $\bar{\varphi}$ is differentiable at $1$ by construction and a function is strictly convex if and only if its derivative is strictly increasing, it is easy to see that $\bar{\varphi}$ is differentiable and strictly convex on $(0,\infty)$.}
		\end{enumerate}
		Thus non-negative $(\mathscr{P}(\mathbb{N})$-measurable) functions are elements of $\mathbb{R}^{+^\infty}$. Note, importantly, that the corresponding generalized legitimate inaccuracy measure $\mathscr{I}$ determined by $\mathfrak{D}$ is also given by the corresponding Bregman distance. That is,
		\[\mathscr{I}(c,w)=B_{\bar{\varphi},\mu}(v_w,c).\]
	\end{remark}
	
	To simplify notation, let $B$ denote $B_{\bar{\varphi},\mu}$ a Bregman distance. Let $S$ be the set of non-negative measurable functions on $(X,\mathcal{X},\mu)$. For any $E\subseteq S$ and $t\in S$, we write 
	\[B(E,t)=\inf_{s\in E}B(s,t).\]
	If there exists $s^*\in E$ with $B(s^*,t)=B(E,t)$, then $s^*$ is unique and is called the \textit{B-projection of $t$ onto $E$} (see \citealt[Lemma 2]{csiszar}). As \citeauthor{csiszar} notes, these projections may not exist. However, a weaker kind of projection exists in a large number of cases. To describe them, we need to introduce a kind of convergence called \textit{loose in $\mu$-measure convergence}.
	\begin{definition}
		We say a sequence $\{s_n\}$ of elements from $S$ converges \textit{loosely in $\mu$-measure} to $t$, denoted by $s_n\rightsquigarrow_\mu t$, if for every $A\in \mathcal{X}$ with $\mu(A)<\infty$, we have 
		\[\lim_{n\to \infty} \mu(A\cap \{p:|s_n(p)-t(p)|>\epsilon\})=0\mbox{ for all }\epsilon>0.\]
	\end{definition}
	\begin{definition} \hfill
		\begin{enumerate}
			\item[i.] Given $E\subseteq S$ and $t\in S$, we say that a sequence $\{s_n\}$ of elements from $E$ is a \textit{$B$-minimizing} sequence if $B(s_n,t)\rightarrow B(E,t)$. 
			\item[ii.] If there is an $s^*\in S$ such that every $B$-minimizing sequence converges to $s^*$ loosely in $\mu$-measure, then we call $s^*$ the \textit{generalized $B$-projection of $t$ onto $E$}.
		\end{enumerate}
	\end{definition}
	The result that is integral to proving Theorem \ref{inftheorem1} is the following (see \citeauthor{csiszar}'s Theorem 1, Lemma 2, and Corollary of Theorem 1).
	\begin{theorem}[\citealt{csiszar}]\label{dominancelemma} Let $E$ be a convex subset of $S$ and $t\in S$. If $B(E,t)$ is finite, then there exists $s^*\in S$ such that 
		\[B(s,t)\geq B(E,t)+B(s,s^*)\mbox{ for every }s\in E\]
		and $B(E,t)\geq B(s^*,t)$. It follows that the generalized $B$-projection of $t$ onto $E$ exists and equals~$s^*$.
	\end{theorem}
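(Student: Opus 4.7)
The plan is to follow a convex-analytic argument standard in the theory of Bregman projections, resting on a parallelogram-type identity for Bregman distances. First, choose a $B$-minimizing sequence $\{s_n\}_{n=1}^\infty\subseteq E$, so $B(s_n,t)\to B(E,t)<\infty$. A direct computation from $\mathfrak{d}(u,v)=\varphi(u)-\varphi(v)-\varphi'(v)(u-v)$ yields the pointwise three-point identity $\mathfrak{d}(u,v)=\mathfrak{d}(u,w)+\mathfrak{d}(w,v)+(w-u)[\varphi'(v)-\varphi'(w)]$. Applying this twice with $u=s_n$ and $u=s_m$, common $v=t$, and $w=m_{n,m}:=(s_n+s_m)/2$ (which lies in $E$ by convexity), the cross terms cancel on integration because $(m_{n,m}-s_n)+(m_{n,m}-s_m)=0$, giving the parallelogram identity
\[B(s_n,m_{n,m})+B(s_m,m_{n,m})=B(s_n,t)+B(s_m,t)-2B(m_{n,m},t).\]
The right-hand side is bounded above by $B(s_n,t)+B(s_m,t)-2B(E,t)$, which tends to $0$ as $n,m\to\infty$.

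Next, I would use strict convexity of $\varphi$ together with $\sigma$-finiteness of $\mu$ to extract a non-negative measurable $s^*$ as a loose-in-$\mu$-measure limit of $\{s_n\}$. On any fixed finite-measure set $A$, the Cauchy bound above forces $\int_A \mathfrak{d}(s_n,m_{n,m})\,d\mu\to 0$; strict convexity forces $\mathfrak{d}(u,v)$ to stay bounded away from $0$ on any bounded portion of the domain with $|u-v|\geq\epsilon$, so $\{s_n\}$ is Cauchy in $\mu$-measure on $A$. Exhausting $\mathcal{F}$ by finite-measure sets and passing to a diagonal subsequence that converges $\mu$-a.e., I would define $s^*$ as the pointwise limit and apply Fatou's lemma to $\mathfrak{d}(\cdot,t)$ to conclude $B(s^*,t)\leq\liminf_n B(s_n,t)=B(E,t)$, which proves the second claim of the theorem.

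To verify the Pythagorean inequality $B(s,t)\geq B(E,t)+B(s,s^*)$ for $s\in E$, I would exploit the variational character of $s^*$. For $\alpha\in(0,1]$, the convex combination $s_n^\alpha:=(1-\alpha)s_n+\alpha s$ lies in $E$, so $B(s_n^\alpha,t)\geq B(E,t)$; combining this with $B(s_n,t)\to B(E,t)$, differentiating the convex function $\alpha\mapsto B(s_n^\alpha,t)$ at $\alpha=0^+$, and passing $n\to\infty$ yields the first-order inequality $\int(s^*-s)[\varphi'(t)-\varphi'(s^*)]\,d\mu\geq 0$. Substituting into the integrated three-point identity gives $B(s,t)\geq B(s,s^*)+B(s^*,t)\geq B(s,s^*)+B(E,t)$. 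Uniqueness of the loose-measure limit across all $B$-minimizing sequences (by the same strict-convexity Cauchy argument applied to a mixed sequence) then shows that $s^*$ is the generalized $B$-projection of $t$ onto $E$.

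The main obstacle will be step two: converting the parallelogram Cauchy control, phrased entirely in terms of the Bregman distance, into genuine loose-in-$\mu$-measure convergence in the absence of a normed-space structure on $S$. Particular care is needed near the boundary of the natural domain of $\varphi$, where $\varphi'$ may blow up and the pointwise bound $\mathfrak{d}(u,v)\geq c(\epsilon)>0$ for $|u-v|\geq\epsilon$ can degenerate, and to ensure $s^*$ is a bona fide non-negative measurable function rather than merely an equivalence-class artifact. The $\sigma$-finiteness hypothesis enters precisely here, enabling set-by-set subsequence extraction along an exhaustion of $\mathcal{F}$ by finite-measure pieces and a diagonal argument to stitch the limits together.
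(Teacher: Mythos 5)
First, a framing point: the paper does not prove this theorem. It is imported wholesale from Csisz\'ar (his Theorem 1, Lemma 2, and the Corollary of Theorem 1), and the ``proof'' in the paper is the citation. So your proposal is not competing with an in-paper argument but attempting to reconstruct Csisz\'ar's. Your skeleton is faithful to his: the three-point identity you compute is correct, the cross terms do cancel at the midpoint, and the resulting bound $B(s_n,m_{n,m})+B(s_m,m_{n,m})\leq B(s_n,t)+B(s_m,t)-2B(E,t)\to 0$ is exactly the engine of the existence proof.

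Two steps, however, have genuine gaps. The first is the Pythagorean inequality. Differentiating $\alpha\mapsto B(s_n^\alpha,t)$ at $\alpha=0^+$ does not close as stated: from $B(s_n^\alpha,t)\geq B(E,t)$ and $B(s_n,t)\leq B(E,t)+\epsilon_n$ you only get the lower bound $-\epsilon_n/\alpha$ on the difference quotient, which degenerates as $\alpha\to 0$, so you cannot conclude non-negativity of the derivative in the limit; moreover the limiting first-order term $\int(s^*-s)\bigl[\varphi'(t)-\varphi'(s^*)\bigr]\,d\mu$ need not be well defined, since the linear term in the three-point identity can fail to be integrable (this is precisely why Csisz\'ar avoids integrating it). The standard repair is the exact convexity identity
\[B(s_n^\alpha,t)+(1-\alpha)B(s_n,s_n^\alpha)+\alpha B(s,s_n^\alpha)=(1-\alpha)B(s_n,t)+\alpha B(s,t),\]
in which every term is a non-negative Bregman distance; rearranging gives $B(s,s_n^\alpha)\leq \tfrac{1-\alpha}{\alpha}\epsilon_n+B(s,t)-B(E,t)$, after which one sends $n\to\infty$ and $\alpha\to 0$ along a suitable diagonal and invokes lower semicontinuity of $B(s,\cdot)$ under loose convergence. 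The second gap is the one you flag yourself, and it is not merely cosmetic: for general $\varphi$ (e.g.\ $\varphi(x)=x\log x$) the quantity $\mathfrak{d}(u,v)$ with $|u-v|\geq\epsilon$ tends to $0$ as $u,v\to\infty$, so the pointwise lower bound $c(\epsilon)>0$ fails off bounded sets and an additional tightness or truncation argument is needed before Cauchy-in-Bregman-distance yields Cauchy-in-$\mu$-measure on finite-measure sets. Neither gap is fatal to the strategy, but both require the specific machinery of Csisz\'ar's paper to close; since the theorem is used here as a black box, citing that machinery rather than re-deriving it is the appropriate move in this paper.
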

	
	\subsection{Extending Partial Measures}
	We also use an extension result of \cite{tarski} in the proof of Theorem \ref{inftheorem1}. Following \citeauthor{tarski}, we introduce \textit{partial measures} and recall that they can be extended to finitely additive probability functions. Recall the definition of a finitely additive probability function in Definition \ref{credence} (though we drop the assumption that $\mathcal{F}$ is finite).
	\begin{remark}\label{simpleremark} It is a simple corollary of the definition of a finitely additive probability function $c$ over an algebra $\mathcal{F}$ that for any $p,p'\in \mathcal{F}$: if $p\subseteq p'$, then $c(p)\leq c(p')$.
	\end{remark}
	Here is another useful fact about finitely additive probability functions. 
	
	\begin{proposition}\label{measureproperty} If $c$ is a finitely additive probability function on an algebra $\mathcal{F}$ and $a_0,\ldots,a_{m-1}\in \mathcal{F}$, then 
		\begin{equation}
		\sum_{k=0}^{m-1}c(a_k)=\sum_{k=0}^{m-1}c(\bigcup_{p\in S^{m,k}}\bigcap_{i\leq k}a_{p_i})
		\end{equation}
		where $S^{m,k}$ is the set of all sequences $p=(p_0,\ldots,p_k)$ with $0\leq p_0<\ldots<p_k<m$.
	\end{proposition}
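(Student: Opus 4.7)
The plan is to prove this combinatorial identity by passing to the common refinement induced by $a_0,\ldots,a_{m-1}$ and counting contributions, exploiting that $c$ is finitely additive on the algebra $\mathcal{F}$.

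First, for each $J \subseteq \{0,1,\ldots,m-1\}$, define the ``atom''
\[C_J \;=\; \bigcap_{i \in J} a_i \;\cap\; \bigcap_{i \notin J} (W \setminus a_i),\]
which lies in $\mathcal{F}$ since $\mathcal{F}$ is an algebra. The $C_J$ are pairwise disjoint as $J$ ranges over subsets of $\{0,\ldots,m-1\}$. The idea is that every element of $\mathcal{F}$ generated by $a_0,\ldots,a_{m-1}$ decomposes as a disjoint union of such $C_J$, so finite additivity reduces the identity to a double-counting argument over the $C_J$.

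Next, I would carry out the two decompositions. For the left-hand side, observe that $a_k$ is the disjoint union of those $C_J$ with $k \in J$, so by finite additivity
\[\sum_{k=0}^{m-1} c(a_k) \;=\; \sum_{k=0}^{m-1} \sum_{J \ni k} c(C_J) \;=\; \sum_{J} |J|\, c(C_J).\]
For the right-hand side, I would verify that $\bigcup_{p \in S^{m,k}} \bigcap_{i \leq k} a_{p_i}$ equals the set of worlds lying in at least $k+1$ of the $a_i$'s, namely $\bigsqcup_{|J| \geq k+1} C_J$ (this is immediate: membership in $\bigcap_{i \leq k} a_{p_i}$ for some strictly increasing $(p_0,\ldots,p_k)$ is equivalent to being in at least $k+1$ of the $a_i$'s). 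Finite additivity then gives
\[\sum_{k=0}^{m-1} c\!\left(\bigcup_{p \in S^{m,k}} \bigcap_{i \leq k} a_{p_i}\right) \;=\; \sum_{k=0}^{m-1} \sum_{J:\, |J| \geq k+1} c(C_J) \;=\; \sum_{J} c(C_J)\cdot \#\{k : 0 \leq k \leq |J|-1\} \;=\; \sum_{J} |J|\, c(C_J),\]
matching the left-hand side.

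There is no real obstacle; the only subtlety is to be explicit that the unions defining the $C_J$ and the sets $\bigcup_{p}\bigcap_{i}a_{p_i}$ are \emph{finite} unions of \emph{disjoint} members of $\mathcal{F}$, so that only finite additivity (not countable additivity) is needed, and to verify the combinatorial identity $\#\{k \in \{0,\ldots,m-1\} : |J| \geq k+1\} = |J|$ (which holds because $|J| \leq m$). With these in hand, both sides are shown to equal $\sum_J |J|\, c(C_J)$, completing the proof.
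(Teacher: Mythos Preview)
Your proof is correct. The atom decomposition via the $C_J$ is exactly the right tool, both sides reduce to $\sum_J |J|\,c(C_J)$ by the double-counting you give, and you are careful to note that only finite additivity is invoked.

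As for comparison: the paper does not actually supply a proof of this proposition. It is stated as a ``useful fact about finitely additive probability functions'' in the section reviewing material from \citet{tarski}, and is then used as a black box in the proofs of Theorem~\ref{inftheorem1} and Theorem~\ref{verygeneral}. So there is nothing in the paper to compare your argument against; your atom-based proof is a perfectly standard and complete justification of a result the paper leaves unproved.
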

	To introduce the notion of a partial measure, we need the following definition.
	\begin{definition}Let $\varphi_0,\ldots,\varphi_{m-1}$ and $\psi_0,\ldots,\psi_{n-1}$ be elements of $\mathcal{F}$. Then we write 
		\[(\varphi_0,\ldots,\varphi_{m-1})\subseteq(\psi_0,\ldots,\psi_{n-1})\]
		to mean
		\begin{equation}
		\bigcup_{p\in S^{m,k}}\bigcap_{i\leq k}\varphi_{p_i}\subseteq \bigcup_{p\in S^{n,k}}\bigcap_{i\leq k}\psi_{p_i} \mbox{ for every }k<m
		\end{equation}
		where $S^{r,k}$  ($r=m,n$) is as in Proposition \ref{measureproperty}.\footnote{Note that if $m>n$, this condition implies $\bigcup_{p\in S^{m,k}}\bigcap_{i\leq k}\varphi_{p_i}=\bigcup_{p\in S^{n,k}}\bigcap_{i\leq k}\psi_{p_i}=\varnothing$ for $k\geq n$.}
	\end{definition}
	
	\begin{definition}\label{partialmeasure} A function $c$, defined on a subset $S$ of an algebra $\mathcal{F}$ over $W$, that maps to $\mathbb{R}$ is called a \textit{partial measure} if it satisfies the following properties:
		\begin{enumerate}
			\item \label{partialmeasure1} $c(x)\geq 0$ for $x\in S$;
			\item\label{partialmeasure2} If $\varphi_0,\ldots,\varphi_{m-1},\psi_0,\ldots,\psi_{n-1}\in S$ and 
			\[(\varphi_0,\ldots,\varphi_{m-1})\subseteq (\psi_0,\ldots,\psi_{n-1}),\]
			then 
			\[\sum_{k=0}^{m-1}c(\varphi_{k})\leq \sum_{k=0}^{n-1}c(\psi_{k});\]
			\item\label{partialmeasure3} $W\in S$ and $c(W)=1$.
		\end{enumerate}
	\end{definition}
	The following result is the point of introducing the above definitions. 
	\begin{theorem}[\citealt{tarski}]\label{extendingpartial} Let $c$ be a partial measure on a subset $\mathcal{F}$ of an algebra $\mathcal{A}$. Then there is a finitely additive probability function $c^*$ on $\mathcal{A}$ that extends $c$.
	\end{theorem}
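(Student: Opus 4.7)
The plan is to realize $c$ as a positive linear functional on a subspace of bounded functions on $W$ and then invoke the Hahn-Banach theorem. Let $B(W)$ denote the real vector space of bounded real-valued functions on $W$ equipped with the sup norm, and let $L \subseteq B(W)$ be the linear span of $\{\mathbf{1}_s : s \in \mathcal{F}\}$, where $\mathcal{F}$ is the domain of the partial measure $c$. I would define $\Lambda_0 : L \to \mathbb{R}$ by $\Lambda_0\!\left(\sum_i \alpha_i \mathbf{1}_{s_i}\right) = \sum_i \alpha_i c(s_i)$, verify it is well-defined and dominated by the sublinear functional $p(f) = \sup_{w \in W} f(w)$, apply Hahn-Banach to extend it to a linear $\Lambda$ on all of $B(W)$ with $\Lambda(f) \leq \sup f$, and finally define $c^*(A) := \Lambda(\mathbf{1}_A)$ for $A \in \mathcal{A}$.

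The crucial step is translating condition \ref{partialmeasure2} of Definition \ref{partialmeasure} into a pointwise statement about indicator sums. The key observation is that $\bigcup_{p \in S^{m,k}} \bigcap_{i \leq k} \varphi_{p_i}$ is precisely the super-level set $\{w \in W : \sum_{j=0}^{m-1} \mathbf{1}_{\varphi_j}(w) \geq k+1\}$, so the relation $(\varphi_0,\ldots,\varphi_{m-1}) \subseteq (\psi_0,\ldots,\psi_{n-1})$ from the definition is equivalent to the pointwise inequality $\sum_j \mathbf{1}_{\varphi_j} \leq \sum_j \mathbf{1}_{\psi_j}$ on $W$. Condition \ref{partialmeasure2} then reads: pointwise majorization of sums of indicators from $\mathcal{F}$ implies the corresponding inequality of sums of $c$-values. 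Applying this in both directions to equal sums of indicators yields well-definedness and additivity of $\Lambda_0$ on the cone of non-negative integer combinations; clearing denominators extends well-definedness to rational combinations, and splitting each element of $L$ into positive and negative parts (regrouping terms with like sign across any two representations) lifts the argument to arbitrary real coefficients. Combining the resulting monotonicity with condition \ref{partialmeasure3} that $c(W) = 1$, and comparing $f$ with $(\sup f)\cdot\mathbf{1}_W$, gives $\Lambda_0(f) \leq \sup f$ for $f \in L$.

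Once $\Lambda_0$ is in place, Hahn-Banach delivers the desired extension $\Lambda$ with $\Lambda(f) \leq \sup f$ on $B(W)$. Positivity of $\Lambda$ is automatic: if $f \geq 0$ then $\Lambda(-f) \leq \sup(-f) \leq 0$, so $\Lambda(f) \geq 0$. Setting $c^*(A) = \Lambda(\mathbf{1}_A)$, finite additivity follows from linearity of $\Lambda$ together with $\mathbf{1}_{A \cup B} = \mathbf{1}_A + \mathbf{1}_B$ for disjoint $A, B \in \mathcal{A}$; the normalization $c^*(W) = \Lambda(\mathbf{1}_W) = c(W) = 1$ comes from condition \ref{partialmeasure3}; and $c^*$ takes values in $[0,1]$ by positivity together with $\Lambda(\mathbf{1}_A) \leq \sup \mathbf{1}_A \leq 1$. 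By construction $c^*|_\mathcal{F} = c$. The main obstacle will be the bookkeeping in the second paragraph: unpacking the combinatorial form of condition \ref{partialmeasure2} into the pointwise statement for indicator sums, and bootstrapping from non-negative integer coefficients (where the translation is cleanest) to arbitrary real coefficients via scaling and sign-splitting. The remaining steps are routine consequences of Hahn-Banach and linearity.
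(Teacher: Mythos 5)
The paper does not prove this theorem; it is imported verbatim from Horn and Tarski's 1948 work on measures in Boolean algebras, so there is no in-paper argument to compare against. Your Hahn--Banach proof is correct and is the standard functional-analytic alternative to Horn and Tarski's original, more combinatorial extension argument (which builds the extension element by element using inner and outer approximations defined through the same covering condition). Your key observation is right and is the crux: since $S^{m,k}$ ranges over the $(k+1)$-element subsets of $\{0,\dots,m-1\}$, the set $\bigcup_{p\in S^{m,k}}\bigcap_{i\leq k}\varphi_{p_i}$ is exactly $\{w:\sum_{j}\mathbf{1}_{\varphi_j}(w)\geq k+1\}$, so the relation $(\varphi_0,\dots,\varphi_{m-1})\subseteq(\psi_0,\dots,\psi_{n-1})$ is equivalent to the pointwise inequality $\sum_j\mathbf{1}_{\varphi_j}\leq\sum_j\mathbf{1}_{\psi_j}$, and condition \ref{partialmeasure2} becomes precisely the monotonicity needed to dominate $\Lambda_0$ by $f\mapsto\sup f$ (using $W\in S$ and $c(W)=1$ to compare against multiples of $\mathbf{1}_W$). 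The only step I would ask you to spell out is the passage from nonnegative integer coefficients to arbitrary reals: clearing denominators handles rationals, but for irrational coefficients you should either note that the coefficient vectors satisfying a given pointwise (in)equality form a set cut out by $0$--$1$ linear constraints, hence admit rational approximants, or directly approximate $a_i$ from below and $b_j$ from above by rationals and pass to the limit in $\sum a_i c(u_i)\leq\sum b_j c(v_j)$. With that filled in, the remaining steps (positivity from $\Lambda(-f)\leq\sup(-f)$, finite additivity from $\mathbf{1}_{A\cup B}=\mathbf{1}_A+\mathbf{1}_B$, normalization from condition \ref{partialmeasure3}) are routine, as you say. What your route buys is a short, self-contained proof from standard functional analysis; what the Horn--Tarski route buys is a choice-theoretically more transparent construction that works directly inside the Boolean algebra without embedding into a function space.
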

	\subsection{Proof of Theorem \ref{inftheorem1}}
	We now establish the necessity of coherence to avoid dominance in the countably infinite case.
	
	\inftheorem*
	\begin{proof}
		Let $\mathscr{I}$ be a generalized legitimate inaccuracy measure and thus defined by a Bregman distance $B_{\bar{\varphi},\mu}$ (see Remark \ref{divergencetodistance}). We write $B$ for $B_{\bar{\varphi},\mu}$. Let $S$ be the set of non-negative functions on $\mathcal{F}$. Let $E\subseteq S$ be the set of coherent credence functions on $\mathcal{F}$. Then clearly $E$ is convex. 
		
		Let $c$ be an incoherent credence function.
		
		\textbf{Case 1}: $\mathscr{I}(c,w)=\infty$ for all $w\in W$. Then since $\mathscr{I}(v_{w},w)=0$ for all $w\in W$, any omniscient credence function weakly dominates $c$. 
		
		\textbf{Case 2}: $\mathscr{I}(c,w')<\infty$ for some $w'\in W$. We show that there is a coherent credence function $\pi_c$ such that \[\mathscr{I}(c,w)>\mathscr{I}(\pi_c,w) \mbox{ for any } w \mbox{ such that } \mathscr{I}(c,w)<\infty.\] Since $v_{w'}\in E$, we see that
		\[B(E,c)\leq B(v_{w'},c)=\mathscr{I}(c,w')<\infty.\]
		Thus we can apply Theorem \ref{dominancelemma} to get a $\pi_c\in S$ such that
		\begin{equation}\label{inequality}
		B(s,t)\geq B(E,c)+B(s,\pi_c)\mbox{ for every }s\in E.
		\end{equation}
		In particular, (\ref{inequality}) holds when $s$ is the omniscient credence function at world $w$ for any $w\in W$; and so we see that
		\begin{equation}\label{inequality2}
		\mathscr{I}(c,w)\geq B(E,c)+\mathscr{I}(\pi_c,w)
		\end{equation}
		for all $w$, where all numbers in (\ref{inequality2}) are finite whenever $\mathscr{I}(c,w)<\infty$. 
		
		Next we show that $\pi_c$ is in fact coherent. This is due to the  following claim: $E$ is closed under loose convergence in $\mu$-measure where $\mu$ is a weighted counting measure on $\mathcal{P}(\mathbb{N})$ defined with weights $\{a_i\}_{i=1}^\infty$. To see this, let $c_n\in E$ for each $n$ and $c\in S$. Assume $c_n\to c$ loosely in $\mu$-measure. We show $c\in E$, i.e., $c$ is coherent. Note $c$ is coherent on $\mathcal{F}$ if and only if $c':\mathcal{F}\cup \{W\}\to [0,1]$ is coherent on $\mathcal{F}\cup \{W\}$, where $c'=c$ on $\mathcal{F}$ and $c'(W)=1$. Thus it suffices to assume $c$ and $c_n$ for all $n$ are defined on $\mathcal{F}\cup \{W\}$ with $c(W)=c_n(W)=1$ for all $n$.
		
		It is easy to see that loose convergence in a weighted counting measure (where all weights are non-zero) implies pointwise convergence on $\mathcal{F}$, so
		\[c(p)=\lim_{n\to \infty}c_n(p)\in [0,1]\]
		for each $p\in \mathcal{F}\cup \{W\}$. To show $c\in E$, it suffices to show $c$ can be extended to a finitely additive probability function on $\mathcal{P}(W)$. 
		
		We first show $c$ is a partial measure on $\mathcal{F}\cup \{W\}$. Definitions \ref{partialmeasure}.\ref{partialmeasure1} and \ref{partialmeasure}.\ref{partialmeasure3} clearly hold for $c$ so we just need to show Definition \ref{partialmeasure}.\ref{partialmeasure2} holds. Let $\varphi_0,\ldots,\varphi_{m-1},\psi_0,\ldots,\psi_{m'-1}\in \mathcal{F}\cup \{W\}$ and
		\[\bigcup_{p\in S^{m,k}}\bigcap_{i\leq k}\varphi_{p_i}\subseteq \bigcup_{p\in S^{m',k}}\bigcap_{i\leq k}\psi_{p_i}\]
		for every $k<m$. Since the $c_n$ are coherent and thus extend to finitely additive probability functions on algebras containing $\mathcal{F}$, we have by Proposition \ref{measureproperty} and Remark \ref{simpleremark} that
		\[\sum_{k=0}^{m-1}c_n(\varphi_k)=\sum_{k=0}^{m-1}c_n(\bigcup_{p\in S^{m,k}}\bigcap_{i\leq k}\varphi_{p_i})\leq\sum_{k=0}^{m'-1}c_n(\bigcup_{p\in S^{m',k}}\bigcap_{i\leq k}\psi_{p_i})=\sum_{k=0}^{m'-1}c_n(\psi_k)\]
		using that 
		\[\bigcup_{p\in S^{m,k}}\bigcap_{i\leq k}\varphi_{p_i}=\bigcup_{p\in S^{m',k}}\bigcap_{i\leq k}\psi_{p_i}=\varnothing\]
		for $k\geq m'$. Sending $n$ to infinity and using the pointwise convergence of $c_n$ to $c$ on $\mathcal{F}\cup \{W\}$ we obtain that 
		\[\sum_{k=0}^{m-1}c(\varphi_k)\leq \sum_{k=0}^{m'-1}c(\psi_k).\]
		Thus $c$ is a partial measure on $\mathcal{F}\cup \{W\}$. By Theorem \ref{extendingpartial}, it follows that there is a finitely additive probability function $c^*$ on an algebra $\mathcal{F}^*\supseteq \mathcal{F}$ that extends $c$ and so $c\in E$, which concludes the proof that $E$ is closed under loose $\mu$-convergence.
		
		By Theorem \ref{dominancelemma}, $\pi_c$ is the generalized $B$-projection of $c$ onto $E$. Also, since \[B(E,c)=\inf_{s\in E}(s,c)<\infty,\]
		there is a B-minimizing sequence $\{s_n\}\subseteq E$ such that $B(s_n,c)\to B(E,c)$ by the definition of infimum. By the definition of a generalized projection, $s_n\rightsquigarrow_\mu \pi_c$. Since $E$ is closed under loose convergence, it follows that $\pi_c\in E$. Further, by Theorem \ref{dominancelemma}, 
		\[B(E,c)\geq B(\pi_c,c)>0,\]
		since $\pi_c\neq c$ (as $c$ is incoherent) and $B(s,t)=0$ if and only if $s=t$ (as $\mu$ is a weighted counting measure with all non-zero weights). So for every $w$ such that $\mathscr{I}(c,w)<\infty$, we deduce that
		\[\mathscr{I}(c,w)\geq B(E,c)+\mathscr{I}(\pi_c,w)>\mathscr{I}(\pi_c,w).\]
		This proves that $c$ is weakly dominated by $\pi_c$, and $c$ is strongly dominated by $\pi_c$ if $\mathscr{I}(c,w)<\infty$ for all $w\in W$.
	\end{proof}
	\subsection{Proof of Theorem \ref{verygeneral}}	 
		We now establish the necessity of coherence to avoid dominance in the uncountable case.

\verygeneral*
		\begin{proof}
		Let $\mathscr{I}(c,w)=B_{\varphi,\mu}(v_w,c)$. We write $B$ for $B_{\varphi,\mu}$. Let $S$ be the set of non-negative $\mathcal{A}$-measurable functions on $\mathcal{F}$. Let $E\subseteq S$ be the set of $\mu$-coherent $\mu$-credence functions over $\mathcal{F}$. Then $E$ is convex. Let $c$ be a $\mu$-incoherent $\mu$-credence function. Because $\mu$ is finite and $\mathfrak{d}$ is bounded,
		\[B(E,c)<\infty.\]
		Thus we can apply Theorem \ref{dominancelemma} to get a $\pi_c\in S$ such that
		\begin{equation}\label{inequality'}
		B(s,c)\geq B(E,c)+B(s,\pi_c)\mbox{ for every }s\in E.
		\end{equation}
		In particular, (\ref{inequality'}) holds when $s$ is the omniscient credence function at world $w$ for each $w$, so we obtain
		\begin{equation}\label{inequality2'}
		\mathscr{I}(c,w)\geq B(E,c)+\mathscr{I}(\pi_c,w)
		\end{equation}
		for all $w$, where all numbers in (\ref{inequality2'}) are finite. We show that $\pi_c$ is in fact a $\mu$-coherent $\mu$-credence function. It suffices to show that $\pi_c$ is $\mu$-a.e. equal to a coherent credence function on $\mathcal{F}$ (since $\pi_c\in S$, it is $\mathcal{A}$-measurable). To do so, we prove the following claim: $E$ is closed under loose-convergence in $\mu$-measure.
		
		To see this, let $c_n\in E$ for each $n$ and $c\in S$. Assume $c_n\to c$ loosely in $\mu$-measure.
		The first thing to notice is that, since $\mu$ is finite, loose $\mu$-convergence implies $\mu$-a.e. convergence on a subsequence $\{a_n\}_{n=1}^\infty$ of $\{n\}_{n=1}^\infty$,\footnote{It is a standard fact that convergence in measure implies a.e. convergence on a subsequence. Now notice that loose convergence implies convergence in measure when the measure is finite.} so that
		\[c(p)=\lim_{n\to \infty}c_{a_n}(p)\in [0,1]\]
		for each $p\in \mathcal{G}$ with $\mu(\mathcal{G}^c)=0$. Since the $c_{a_n}$ are $\mu$-coherent, we can change each $c_{a_n}$ on a (measurable) measure zero set $\mathcal{X}_n$ to get coherent $\mu$-credence functions $c_{a_n}$. Further, we replace $\mathcal{G}$ with $\mathcal{G}\setminus (\cup_{n=1}^\infty \mathcal{X}_n)$. Assuming these adjustments have been made, we have that $c_{a_n}\to c$ on $\mathcal{G}$ with $\mu(\mathcal{G}^c)=0$, and  each $c_{a_n}$ is coherent. We now show $c\in E$ by showing it is equal to a coherent credence function on $\mathcal{F}$ when restricting to $\mathcal{G}$. 
		
		First, we extend $c$ (resp.~$c_{a_n}$) to $\overline{c}$ (resp.~$\overline{c_{a_n}}$), where $\overline{c}$ (resp.~$\overline{c_{a_n}}$) is a credence function on $\mathcal{G}\cup \{W\}$ such that $c=\overline{c}$ (resp.~$c_{a_n}=\overline{c_{a_n}}$) on $\mathcal{G}$ and $\overline{c}(W)=1$ (resp.~$\overline{c_{a_n}}(W)=1$). Then notice that $c$ (resp.~$c_{a_n}$) is coherent on $\mathcal{G}$ if and only if $\overline{c}$ (resp.~$\overline{c_{a_n}}$) is coherent on $\mathcal{G}\cup \{W\}$. Thus we work with $\overline{c}$ and $\overline{c_{a_n}}$ instead noting that $\overline{c}=\lim_n\overline{c_{a_n}}$ on $\mathcal{G}\cup \{W\}$. To show $\overline{c}\in E$, we first show $\overline{c}$ is a partial measure on $\mathcal{G}\cup \{W\}$.  
		
		Definitions \ref{partialmeasure}.\ref{partialmeasure1} and  \ref{partialmeasure}.\ref{partialmeasure3} clearly hold for $\overline{c}$ so we just need to show that Definition \ref{partialmeasure}.\ref{partialmeasure2} holds. Let $\varphi_0,\ldots,\varphi_{m-1},\psi_0,\ldots,\psi_{m'-1}\in \mathcal{G}\cup \{W\}$ and
		\[\bigcup_{p\in S^{m,k}}\bigcap_{i\leq k}\varphi_{p_i}\subseteq \bigcup_{p\in S^{m',k}}\bigcap_{i\leq k}\psi_{p_i}\]
		for every $k<m$. Since $\overline{c_{a_n}}$ are coherent on $\mathcal{G}\cup \{W\}$ and thus extend to measures on an algebra containing $\mathcal{G}\cup \{W\}$, we have by Corollary \ref{measureproperty} that
		\[\sum_{k=0}^{m-1}\overline{c_{a_n}}(\varphi_k)=\sum_{k=0}^{m-1}\overline{c_{a_n}}(\bigcup_{p\in S^{m,k}}\bigcap_{i\leq k}\varphi_{p_i})\leq\sum_{k=0}^{m'-1}\overline{c_{a_n}}(\bigcup_{p\in S^{m',k}}\bigcap_{i\leq k}\psi_{p_i})=\sum_{k=0}^{m'-1}\overline{c_{a_n}}(\psi_k)\]
		using that
		\[\bigcup_{p\in S^{m,k}}\bigcap_{i\leq k}\varphi_{p_i}=\bigcup_{p\in S^{m',k}}\bigcap_{i\leq k}\psi_{p_i}=\varnothing\]
		for $k\geq m'$. Sending $n$ to infinity and using the pointwise convergence of $\overline{c_{a_n}}$ to $\overline{c}$ on $\mathcal{G}\cup \{W\}$ we conclude that 
		\[\sum_{k=0}^{m-1}\overline{c}(\varphi_k)\leq \sum_{k=0}^{m'-1}\overline{c}(\psi_k).\]
		Thus $\overline{c}$ is a partial measure on $\mathcal{G}\cup \{W\}$. By Theorem \ref{extendingpartial}, it follows that there is a finitely additive probability function $c^*$ on $\mathcal{A}(\mathcal{F})$ such that $c^*=\overline{c}$ on $\mathcal{G}\cup \{W\}$. Thus  $c^*|_\mathcal{F}$ is a coherent credence function on $\mathcal{F}$ and 
		\[c=\bar{c}|_{\mathcal{F}}=c^*|_{\mathcal{F}}\] $\mu$-a.e.~(specifically off $\mathcal{G}^c$). Further, we already assumed $c$ is $\mathcal{A}$-measurable and $\{p:c(p)\in [0,1]\}\subseteq \mathcal{G}$. Thus $c$ is a $\mu$-coherent $\mu$-credence function.
		
		The proof is finished just as in the proof of Theorem \ref{inftheorem1}. By Theorem \ref{dominancelemma}, $\pi_c$ is the generalized projection of $c$ onto $E$. Since \[B(E,c)=\inf_{s\in E}(s,c)<\infty\] there is a B-minimizing sequence $\{s_n\}$ of elements in $E$ such that $B(s_n,c)\to B(E,c)$ by the definition of infimum. By the definition of a generalized projection, $s_n\rightsquigarrow_\mu \pi_c$. Since $E$ is closed under loose convergence, it follows that $\pi_c\in E$. Further, since $c$ is $\mu$-incoherent we know $c\neq \pi_c$ (up to $\mu$-a.e. equivalence) so we see $B(E,c)\geq B(\pi_c,c)>0$ since $B(s,t)=0$ if and only if $s=t$ $\mu$-a.e. Since $\mathscr{I}(c,w)<\infty$ for all $w$, we deduce that
		\[\mathscr{I}(c,w)\geq B(E,c)+\mathscr{I}(\pi_c,w)>\mathscr{I}(\pi_c,w)\]
		for all $w\in W$. This proves that $c$ is strongly dominated by $\pi_c$, and we are done.
	\end{proof}

	\bibliographystyle{plainnat}
	\bibliography{accuracycitations}
\end{document}